\begin{document}

\newtheorem{thm}{Theorem}[section]
\newtheorem{theorem}{Theorem}[section]
\newtheorem{lem}[thm]{Lemma}
\newtheorem{lemma}[thm]{Lemma}
\newtheorem{prop}[thm]{Proposition}
\newtheorem{proposition}[thm]{Proposition}
\newtheorem{corollary}[thm]{Corollary}
\newtheorem{definition}[thm]{Definition}
\newtheorem{remark}[thm]{Remark}
\newtheorem{conjecture}[theorem]{Conjecture}

\numberwithin{equation}{section}

\newcommand{\Z}{{\mathbb Z}} 
\newcommand{\Q}{{\mathbb Q}}
\newcommand{\R}{{\mathbb R}}
\newcommand{\C}{{\mathbb C}}
\newcommand{\N}{{\mathbb N}}
\newcommand{\FF}{{\mathbb F}}
\newcommand{\fq}{\mathbb{F}_q}
\newcommand{\X}{{\mathbb {X}}}
\newcommand{\rmk}[1]{\footnote{{\bf Comment:} #1}}

\newcommand{\bfA}{{\boldsymbol{A}}}
\newcommand{\bfY}{{\boldsymbol{Y}}}
\newcommand{\bfX}{{\boldsymbol{X}}}
\newcommand{\bfZ}{{\boldsymbol{Z}}}
\newcommand{\bfa}{{\boldsymbol{a}}}
\newcommand{\bfy}{{\boldsymbol{y}}}
\newcommand{\bfx}{{\boldsymbol{x}}}
\newcommand{\bfz}{{\boldsymbol{z}}}
\newcommand{\F}{\mathcal{F}}
\newcommand{\Gal}{\mathrm{Gal}}
\newcommand{\Fr}{\mathrm{Fr}}
\newcommand{\Hom}{\mathrm{Hom}}
\newcommand{\GL}{\mathrm{GL}}

\renewcommand{\mod}{\;\operatorname{mod}}
\newcommand{\ord}{\operatorname{ord}}
\newcommand{\TT}{\mathbb{T}}
\renewcommand{\i}{{\mathrm{i}}}
\renewcommand{\d}{{\mathrm{d}}}
\renewcommand{\^}{\widehat}
\newcommand{\HH}{\mathbb H}
\newcommand{\Vol}{\operatorname{vol}}
\newcommand{\area}{\operatorname{area}}
\newcommand{\tr}{\operatorname{tr}}
\newcommand{\norm}{\mathcal N} 
\newcommand{\intinf}{\int_{-\infty}^\infty}
\newcommand{\ave}[1]{\left\langle#1\right\rangle} 
\newcommand{\Var}{\operatorname{Var}}
\newcommand{\Prob}{\operatorname{Prob}}
\newcommand{\sym}{\operatorname{Sym}}
\newcommand{\disc}{\operatorname{disc}}
\newcommand{\CA}{{\mathcal C}_A}
\newcommand{\cond}{\operatorname{cond}} 
\newcommand{\lcm}{\operatorname{lcm}}
\newcommand{\Kl}{\operatorname{Kl}} 
\newcommand{\leg}[2]{\left( \frac{#1}{#2} \right)}  
\newcommand{\Li}{\operatorname{Li}}

\newcommand{\sumstar}{\sideset \and^{*} \to \sum}

\newcommand{\LL}{\mathcal L} 
\newcommand{\sumf}{\sum^\flat}
\newcommand{\Hgev}{\mathcal H_{2g+2,q}}
\newcommand{\USp}{\operatorname{USp}}
\newcommand{\conv}{*}
\newcommand{\dist} {\operatorname{dist}}
\newcommand{\CF}{c_0} 
\newcommand{\kerp}{\mathcal K}

\newcommand{\Cov}{\operatorname{cov}}
\newcommand{\Sym}{\operatorname{Sym}}

\newcommand{\ES}{\mathcal S} 
\newcommand{\EN}{\mathcal N} 
\newcommand{\EM}{\mathcal M} 
\newcommand{\Sc}{\operatorname{Sc}} 
\newcommand{\Ht}{\operatorname{Ht}}

\newcommand{\E}{\operatorname{E}} 
\newcommand{\sign}{\operatorname{sign}} 

\newcommand{\divid}{d} 

\newcommand{\h}{\mathbb{H}_{2g+1}}
\newcommand{\p}{\mathbb{P}_{2g+1}}
\newcommand{\f}{\mathbb{F}_{q}[T]}
\newcommand{\z}{\zeta_A}
\newcommand{\lo}{\log_q}
\newcommand{\x}{\chi}
\newcommand{\xx}{\mathcal{X}}
\newcommand{\lL}{\mathcal{L}}
\newcommand{\e}{\varepsilon}
\newcommand{\w}{\omega}
\newcommand{\pp}{\text{\textbf{P}}}

\title[THE CLASS NUMBERS OF PRIMES]
{The Moments and statistical Distribution of Class number of Primes over Function Fields}

\author{Julio Andrade}
\address{Department of Mathematics, University of Exeter, Exeter, EX4 4QF, United Kingdom}
\email{j.c.andrade@exeter.ac.uk}

\author{ASMAA sHAMESALDEEN}
\address{Department of Mathematics, University of Exeter, Exeter, EX4 4QF, United Kingdom}
\email{as1029@exeter.ac.uk}

\thanks{The first author is grateful to the Leverhulme Trust (RPG-2017-320) for
the support through the research project grant “Moments of L-functions
in Function Fields and Random Matrix Theory”. The second author was
supported by a Ph.D. scholarship from the government of Kuwait.} 

\subjclass[2010]{Primary 11M38; Secondary 11M06, 11G20, 11M50, 14G10}
\keywords{Mean values of $L$--functions; finite fields; function fields; class numbers}

\begin{abstract}
We investigate the moment and the distribution of $L(1,\x_P),$ where $\x_P$ varies over quadratic characters associated to irreducible polynomials $P$ of degree $2g+1$ over $\mathbb{F}_q[T]$ as $g\to\infty$. In the first part of the paper we compute the integral moments of the class number $h_{P}$ associated to quadratic function fields with prime discriminants $P$ and this is done by adapting to the function field setting some of the previous results carried out by Nagoshi in the number field setting. In the second part of the paper we compute the complex moments of of $L(1,\x_P)$ in large uniform range and investigate the statistical distribution of the class numbers by introducing a certain random Euler product. The second part of the paper is based on recent results carried out by Lumley when dealing with square-free polynomials.
\end{abstract}
\date{\today}

\maketitle

\section{Introduction}\label{into}

Gauss in his \textit{Disquistiones Arthmeticae} \cite{Gauss}, presented two conjectures concerning the average values of the class numbers $h_D$ associated with binary quadratic forms $ax^2+2bxy+cy^2,$ where $a,b$ and $c$ are integers, and $D=4(b^2-ac)$ is the discriminant of the  binary quadratic forms $ax^2+2bxy+cy^2.$  Gauss conjectured that
 
\begin{equation*}
\sum_{\substack{ 0<-D\le X\\ D \equiv 0 \mod 4}} h_D \sim \frac{\pi}{42 \zeta(3)}X^{3/2} \; \text{  and  } \; \sum_{\substack{ 0<D\le X\\ D \equiv 0 \mod 4}} h_D \log \varepsilon_D \sim \frac{\pi^2}{42 \zeta(3)}X^{3/2}
\end{equation*}
as $X\to\infty,$ where $\zeta(3)=\sum_{n=1}^\infty n^{-3}$. Later on, these two conjectures where proved by Lipschitz \cite{lip} and Siegel \cite{sig}. 
\newline

Let $d$ denote a fundamental discriminant and let $\Q(\sqrt{d})$ be the quadratic field with discriminant $d$ and $h_d$ represent the class number of this field. It is a fundamental problem in number theory to understand the distribution value of the size of the class group for a given field. It is not a surprise then that describing the extreme values of $h_{d}$ and their distribution values have been vastly investigated. For example, Granville and Soundararajan \cite{c&s}, and Dahl and Lamzouri \cite{d&l} make use of a random model to study the moments of the class number through the use of Dirichlet's formula that connects $h_{d}$ with the value of the Dirichlet quadratic $L$-function at $s=1$, i.e., with $L(1,\chi_{d})$. Following the work of Granville and Soundararajan, Nagoshi in \cite{nagoshi} established asymptotic formulas for all the moments of $L(1,\x_p)$ with $\x_p$ denoting the real character modulo $p$ given by the Legendre symbol $\left(\frac{\cdot}{p}\right)$, where $p$ is an odd prime.
\newline

Let $d_k(n), k\in\N$ be the generalized $k$-th divisor function, define 

\begin{equation}\label{abk}
\widetilde{a}_k := \sum_{m=1}^\infty \frac{d_k(m^2)}{m^2}\in \R,
\end{equation}
which is convergent by the bound $d_k(n)\ll_{k,\varepsilon}n^\varepsilon,$ for any $\varepsilon>0.$ Nagoshi proved the following. 

\begin{theorem}\label{nagoshithm1} (Nagoshi's Theorem)
	Let $v$ be the integer $1$ or $3$. Let $k\in\N$ and $X\ge 5.$ Then 
	
	\begin{equation*}
	\sum_{\substack{p\le X\\ p\equiv v \mod 4}} \left(\log p\right) L\left(1,\chi_p\right)^k=  \frac{\widetilde{a}_k}{2} X + O_{k,\delta}\left(\frac{X}{(\log X)^{2-\delta}}\right) \text{ for any } \delta>0,
	\end{equation*}
	where the implied constant is effectively computable.
\end{theorem}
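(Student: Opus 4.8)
The plan is to expand the $k$th power into a Dirichlet series, interchange the two summations, and extract the main term from the contribution of perfect squares. Because the Legendre symbol $\chi_p(n)=\leg{n}{p}$ is completely multiplicative, we have $L(1,\chi_p)^k=\sum_{n\ge1}d_k(n)\chi_p(n)/n$. To legitimise the interchange I would first replace $L(1,\chi_p)$ by the partial sum $\sum_{n\le p}\chi_p(n)/n$, whose error is $O(\log p/\sqrt p)$ by partial summation from the P\'olya--Vinogradov inequality; together with the trivial bound $L(1,\chi_p)\ll\log p$ this makes the $k$th power differ by $O((\log p)^k/\sqrt p)$, so that after weighting by $\log p$ and summing over $p\le X$ the total cost is $O(\sqrt X(\log X)^{k+1})$, which is negligible. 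Expanding the resulting finite Dirichlet polynomial and interchanging the order of summation reduces the problem to estimating
\[
\sum_{n}\frac{d_k(n)}{n}\,\Theta_v(X;n),\qquad \Theta_v(X;n):=\sum_{\substack{p\le X,\ p\equiv v \mod 4\\ p\nmid n}}(\log p)\leg{n}{p},
\]
the restriction that each factor be at most $p$ affecting only the negligible large-$n$ tail.

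The main term comes from the perfect squares $n=\square$. If $n=m^2$ with $p\nmid m$ then $\leg{m^2}{p}=1$, so $\Theta_v(X;m^2)=\theta(X;4,v)+O(\omega(m)\log X)$, where $\theta(X;4,v)=\sum_{p\le X,\ p\equiv v}\log p$. The prime number theorem for arithmetic progressions gives $\theta(X;4,v)=\tfrac12 X+O(\cdot)$, since $v$ is one of the two reduced residues modulo $4$. Summing the weights over all square $n$ uses the identity $\sum_{n=\square}d_k(n)/n=\sum_{m\ge1}d_k(m^2)/m^2=\widetilde{a}_k$, and hence the squares contribute $\tfrac{\widetilde{a}_k}{2}X$, up to the error produced by the prime number theorem and by truncating the rapidly convergent series defining $\widetilde{a}_k$.

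The remaining terms have $n$ non-square; writing $n=n_0m^2$ with $n_0>1$ squarefree and invoking quadratic reciprocity, the map $p\mapsto\leg{n}{p}=\leg{n_0}{p}$ becomes, together with the congruence $p\equiv v\mod 4$, a fixed non-principal real Dirichlet character $\psi_n$ to a modulus $q_n\ll n$. Thus $\Theta_v(X;n)$ is a Chebyshev-type sum $\sum_{p\le X}(\log p)\psi_n(p)$ over a non-principal character, for which one expects square-root cancellation, and the whole off-diagonal contribution is $\sum_{n\neq\square}(d_k(n)/n)\,\Theta_v(X;n)$.

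The main obstacle is precisely the \emph{effective} estimation of these real-character sums. For small moduli the Siegel--Walfisz theorem already gives $\Theta_v(X;n)\ll X\exp(-c\sqrt{\log X})$, but the target error $X/(\log X)^{2-\delta}$ together with the insistence on an effectively computable constant points to the true difficulty: $\psi_n$ is \emph{real}, so an exceptional (Siegel) zero cannot be ruled out by effective means, and for moduli as large as a fixed power of $\log X$ the potential exceptional character obstructs any superpolynomial saving. I would therefore truncate the $n$-sum at a suitable power of $\log X$, balancing the tail $\sum_{m>\sqrt N}d_k(m^2)/m^2\ll_\e N^{-1/2+\e}$ of the main-term series against the effective character-sum bound, and neutralise the single possible exceptional modulus using the Landau--Page theorem (at most one exceptional real character below a given bound) together with an effective zero-free region. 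The unavoidable loss incurred in disposing of this exceptional zero is what degrades the error from $\exp(-c\sqrt{\log X})$ to $X/(\log X)^{2-\delta}$ and is the source of both the parameter $\delta$ and the effectivity of the implied constant.
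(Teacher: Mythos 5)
First, a point of comparison: the paper itself contains no proof of this statement. Theorem \ref{nagoshithm1} is quoted verbatim from Nagoshi \cite{nagoshi} as motivation; what the paper proves is the function-field analogue (Theorem \ref{nagoshis theorem}), whose proof follows the same skeleton as yours --- expand $L(1,\chi_P)^k$ into a Dirichlet polynomial (there an \emph{exact} functional equation, since $L(s,\chi_P)$ is a polynomial of degree $2g$), swap the order of summation, extract the main term from the perfect squares via the Prime Polynomial Theorem \ref{PNT}, and bound the non-square terms by a character-sum estimate --- except that the final step uses Weil's Riemann Hypothesis (Proposition \ref{chi}), which gives effective square-root cancellation in $\sum_{\deg P = n}\left(\frac{f}{P}\right)$ for \emph{every} non-square $f$, of any size. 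This is why the paper's error term is a power saving $O(|P|^{1/2}(\log|P|)^{k+1})$ with no $\delta$-loss and no exceptional-zero discussion. Your sketch of the number-field argument matches the architecture of Nagoshi's actual proof, and your diagnosis of the error term is correct and the most insightful part of the proposal: the single possible exceptional modulus $q^*$ contributes $\ll X\,d_k$-weight $\ll X/(q^*)^{1-\e}$ when $q^*$ is large, while Page's \emph{effective} bound $1-\beta \gg (q^*)^{-1/2}(\log q^*)^{-2}$ makes $X^\beta$ superpolynomially small in $\log X$ when $q^*\le(\log X)^{2-\delta}$; the crossover at $q^*\approx(\log X)^2$ is precisely the source of the exponent $2-\delta$ and of the effectivity. (Minor slip: Siegel--Walfisz gives $\ll_A X(\log X)^{-A}$ with \emph{ineffective} constants, not $X\exp(-c\sqrt{\log X})$; the latter is what the classical zero-free region gives for non-exceptional characters, which is what you actually use.)

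The genuine gap is the truncation step, and it is not cosmetic. Your P\'olya--Vinogradov replacement shortens $L(1,\chi_p)$ only to length about $\sqrt{p}\,\log p$, so after expanding the $k$-th power and swapping you face non-square $n$ up to a power of $X$; but your treatment of the off-diagonal requires the moduli, hence the $n$, to be at most a small power of $\log X$ (or at best $\exp(c\sqrt{\log X})$) for the zero-free-region and Landau--Page machinery to yield any $\log$-power saving. You assert that one may ``truncate the $n$-sum at a suitable power of $\log X$,'' but none of the tools you list covers the intermediate range $(\log X)^{O(1)} \ll n \ll \sqrt{X}(\log X)^{O(1)}$: per-prime P\'olya--Vinogradov gives no cancellation in sums that short; per-$n$ prime sums $\sum_{p\le X}(\log p)\psi_n(p)$ with moduli beyond $\exp(\sqrt{\log X})$ get no useful saving from any unconditional zero-free region; and quadratic large-sieve bounds (Heath-Brown) carry an $(nX)^{\e}$ loss that is fatal at sub-power-of-$X$ scales. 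The missing ingredient in the literature (Elliott, Granville--Soundararajan \cite{c&s}, and Nagoshi's own argument) is a zero-density estimate for the family $\{\chi_p\}_{p\le X}$: for all but $O(X^{1-c})$ primes, $L(s,\chi_p)$ has a zero-free rectangle near $s=1$, which legitimises truncating $\log L(1,\chi_p)$ (hence $L(1,\chi_p)^z$) at length $(\log X)^{O(1)}$ prime by prime; the exceptional primes are then absorbed trivially via $L(1,\chi_p)\ll\log p$, costing only $O(X^{1-c}(\log X)^{k+1})$. Without this (or an equivalent bridge across the intermediate range), the swap-and-truncate step at the heart of your plan is unjustified. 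It is exactly this obstacle that evaporates in the paper's function-field setting, where Proposition \ref{chi} handles all non-square $f$ uniformly.
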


As a consequence of the above theorem, Nagoshi established the following asymptotic formulas for all the moment of the class number $h_p,$

\begin{equation*}\label{nagoshithm21}
\sum_{\substack{p\le X\\ p\equiv 3 \mod 4}}h(-p)^k \sim \frac{\widetilde{a}_k}{\pi^k(k+2)}\frac{X^{1+k/2}}{\log X} \left( 1 + \frac{2}{(k+2)}\frac{1}{\log X}\right),
\end{equation*}
and
\begin{equation*}\label{nagoshithm22}
\sum_{\substack{p\le X\\ p\equiv 1 \mod 4}}\left(h(p)\log\varepsilon(p)\right)^k \\
\sim\frac{\widetilde{a}_k}{2^k(k+2)}\frac{X^{1+k/2}}{\log X} \left( 1 + \frac{2}{(k+2)}\frac{1}{\log X}\right).
\end{equation*}
\newline
%

Moreover, Nagoshi investigated the distribution of the class numbers of quadratic fields with prime discriminant. He compared the distribution of values of $L(1,\x_p)$ with the distribution of random Euler products $L(1,W_p)=\prod_p \left(1-W_p(\w)/p\right)^{-1}$ where the $W_p(\w)$'s are independent random variables $\pm 1$ with suitable probabilities (see \cite{lau} and \cite{c&s}). 
\newline

Let $\left\{\X\right\}$ be a sequence of independent random variables on a probability space and $E[\X]$ be its expected value, $G$ and $\widetilde{G}$ are the  distribution functions of the random variable $L(1,\X)$ and $\log L(1,\X),$ respectively, that is, for $x\in\R$

\begin{equation*}
G(x) := \pp\left(\left\{ L(1,\X)\le x\right\}\right)\text{ and }
\widetilde{G}(x) := \pp\left(\left\{\log L(1,\X)\le x\right\}\right). 
\end{equation*} 
With this notation, Nagoshi proved the following.

\begin{theorem}(Nagoshi's Distribution Theorem)\label{nagoshiDistrb}
	For each $x\in\R,$ we have 
	
	\begin{equation*}
	\lim_{N\to\infty} \frac{\#\left\{p\le N \mid p \equiv 3 \mod 4, h(-p)\le \pi^{-1} \sqrt{p} e^x \right\} }{\#\left\{p\le N \mid p \equiv 3 \mod 4 \right\}} = G(e^x) =\widetilde{G}(x),
	\end{equation*}
	and 
	
	\begin{equation*}
	\lim_{N\to\infty} \frac{\#\left\{p\le N \mid p \equiv 1 \mod 4, h(p)\log\varepsilon(p) \le 2^{-1} \sqrt{p} e^x \right\} }{\#\left\{p\le N \mid p \equiv 1 \mod 4 \right\}} = G(e^x) =\widetilde{G}(x).
	\end{equation*}
	The distribution function $G$ is strictly increasing on $(0,\infty),$ and $\widetilde{G}$ is strictly increasing on $\R$. The characteristic function of $G$ has the form
	
	\begin{equation*}
	1+\sum_{k=1}^\infty \frac{\widetilde{a}_k}{k!}\left(\i t\right)^k, \;\; t\in\R,
	\end{equation*}
	where the numbers $\widetilde{a}_k$ are as in (\ref{abk}). The characteristic function $E\left[L(1,\cdot)^{\i t}\right]$ of $\widetilde{G}$ has the form 
	
	\begin{equation*}
	\prod_{p} \left(\frac{1}{2} \left(1-\frac{1}{p}\right)^{-\i t}+ \frac{1}{2} \left(1+\frac{1}{p}\right)^{-\i t}\right), \;\; t\in \R,
	\end{equation*}
	and satisfies
	
	\begin{equation}
	E\left[L(1,\cdot)^{\i t}\right]\ll\exp\left(-c \frac{|t|}{\ln(2+|t|)}\right) \text{ for all } t\in\R
	\end{equation}
	with absolute constant $c>0$. The distribution function $\widetilde{G}$ has a density $g$. Further, $\widetilde{G}$ and $g$ are infinitely differentiable.
\end{theorem}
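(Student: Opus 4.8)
The plan is to deduce the limiting law by the method of moments, to read off the two characteristic functions directly from the random Euler product $L(1,\X)=\prod_p(1-\X_p/p)^{-1}$, and then to extract all the analytic features of the law from the decay of the characteristic function of $\widetilde G$.

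First I would establish weak convergence. A routine expansion of the Euler product, using $\E[\X_p^a]=1$ for $a$ even and $0$ for $a$ odd together with independence of the $\X_p=\pm1$, gives $\E[L(1,\X)^k]=\sum_{m\ge1}d_k(m^2)/m^2=\widetilde a_k$, so the numbers $\widetilde a_k$ of \eqref{abk} are exactly the moments of $L(1,\X)$. Dividing the estimate of Theorem~\ref{nagoshithm1} by $\sum_{\substack{p\le X\\ p\equiv v\bmod 4}}\log p\sim X/2$ shows that the $\log$-weighted $k$-th moment of $L(1,\x_p)$ tends to $\widetilde a_k$; a partial summation removes the weight $\log p$ and yields the same limit for the unweighted empirical $k$-th moment. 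To upgrade convergence of moments to convergence in distribution I would verify that $G$ is determined by its moments: an Euler-product estimate gives the slow growth $\log\widetilde a_k\ll k\log\log k$, whence Carleman's condition $\sum_k\widetilde a_k^{-1/(2k)}=\infty$ holds and the Stieltjes moment problem for the positive random variable $L(1,\X)$ is determinate. The class-number statements then follow verbatim from Dirichlet's class number formula in the normalisation of the theorem, $h(-p)=\pi^{-1}\sqrt p\,L(1,\x_p)$ and $h(p)\log\varepsilon(p)=2^{-1}\sqrt p\,L(1,\x_p)$, since $\{h(-p)\le\pi^{-1}\sqrt p\,e^x\}$ is identical to $\{L(1,\x_p)\le e^x\}$; and $\widetilde G(x)=\Prob(\log L(1,\X)\le x)=\Prob(L(1,\X)\le e^x)=G(e^x)$ is immediate from the definitions.

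Next I would compute the characteristic functions from the model. Expanding $e^{\i tL(1,\X)}$ and inserting $\E[L(1,\X)^k]=\widetilde a_k$ gives the stated power series $1+\sum_{k\ge1}\widetilde a_k(\i t)^k/k!$ for $G$; its convergence for all $t$ is guaranteed by the super-exponential decay of $\widetilde a_k/k!$. For $\widetilde G$, independence of the factors $\X_p$ factorises the expectation:
\begin{equation*}
\E\big[L(1,\cdot)^{\i t}\big]=\prod_p\E\big[(1-\X_p/p)^{-\i t}\big]=\prod_p\Big(\tfrac12(1-1/p)^{-\i t}+\tfrac12(1+1/p)^{-\i t}\Big).
\end{equation*}

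The heart of the matter, and the step I expect to be the main obstacle, is the decay bound. Writing the $p$-th factor as $e^{\i(\alpha_p+\beta_p)/2}\cos\big((\alpha_p-\beta_p)/2\big)$ with $\alpha_p=-t\log(1-1/p)$ and $\beta_p=-t\log(1+1/p)$, one finds
\begin{equation*}
\big|\E[L(1,\cdot)^{\i t}]\big|=\prod_p\big|\cos\theta_p\big|,\qquad \theta_p=\tfrac{t}{2}\log\tfrac{1+1/p}{1-1/p}\asymp\tfrac{t}{p}.
\end{equation*}
The problem is thus to prove $\sum_p\log|\cos\theta_p|\le -c|t|/\log(2+|t|)$. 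I would restrict attention to the primes $p\le|t|$, for which $\theta_p\gtrsim1$; exploiting that $\log|\cos\theta|$ has negative mean $-\log2$ and summing over the $\asymp|t|/\log|t|$ such primes produces a contribution of size $\asymp -|t|/\log|t|$. The delicate points are the control of the primes for which $\cos\theta_p$ is close to $0$ (where $\log|\cos\theta_p|$ is very negative but must not be over-counted), handled by a dyadic or measure-theoretic argument, and the passage from the approximation $\theta_p\asymp t/p$ to the exact argument; this is the classical but technical estimate where the real work lies.

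Finally, the regularity and monotonicity assertions follow formally once the decay bound is in hand. Since $\exp(-c|t|/\log(2+|t|))$ is integrable on $\R$, Fourier inversion furnishes a continuous density $g$ for $\widetilde G$; because this decay dominates every power of $t$, each $t^n\E[L(1,\cdot)^{\i t}]$ is integrable, so one may differentiate under the inversion integral arbitrarily often and conclude that $g$, and hence $\widetilde G$, is $C^\infty$. For strict monotonicity it suffices to show that $\log L(1,\X)$ has full support $\R$: flipping $\X_p$ from $-1$ to $+1$ changes $\log L(1,\X)$ by $\log\frac{1+1/p}{1-1/p}$, and these increments form a divergent series of terms tending to $0$, so every interval is attained with positive probability. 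Hence $\widetilde G$ is strictly increasing on $\R$, and through $G(e^x)=\widetilde G(x)$ the same holds for $G$ on $(0,\infty)$.
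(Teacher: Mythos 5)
Your proposal follows the same skeleton as the paper's proof of the function-field analogue (Theorem \ref{theorem3}), which transcribes Nagoshi's original argument: identify $\widetilde a_k$ as the moments of the random Euler product, deduce determinacy of the moment problem from the growth bound $\widetilde a_k\ll e^{ck\ln\ln k}$ (the paper does this through the infinite radius of convergence of the series $1+\sum_k a_k w^k/k!$ and Billingsley's Theorem 30.1, you through Carleman's condition --- equivalent in strength here), apply the method of moments together with the class number formula to get the two limit laws, factor the characteristic function of $\widetilde G$ by independence, and obtain the density and infinite differentiability from the decay bound via Fourier inversion and differentiation under the integral; your support argument for strict monotonicity is the standard one and is sound (the paper's function-field statement omits that clause, so you supply it independently). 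The single point of divergence is the decay estimate $\E\left[L(1,\cdot)^{\i t}\right]\ll\exp\left(-c|t|/\ln(2+|t|)\right)$, which you correctly identify as the heart of the matter --- and there your plan, as written, has a genuine gap.

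Bounding $\prod_p|\cos\theta_p|$ by working with the primes $p\le|t|$, where $\theta_p\gtrsim 1$, and appealing to the negative mean $-\ln 2$ of $\ln|\cos\theta|$ presupposes an equidistribution statement for the points $\theta_p\bmod\pi$, uniform in $t$, that you never supply: for a fixed $t$ nothing a priori prevents most $\theta_p$ from lying near $\pi\Z$, where the factors have modulus near $1$ and contribute no decay. (Your stated worry, the primes with $\cos\theta_p$ close to $0$, is the harmless extreme: such factors only shrink the product and can never hurt an upper bound.) Making the heuristic rigorous by tracking $p\mapsto t/p$ would require counting primes in intervals of length about $p^2/t$, which is hopeless for $p\ll\sqrt{|t|}$, so the argument could at best be salvaged for $p\asymp|t|$ --- and that is exactly the paper's device, executed on the other side of $|t|$: using $\left|\E\left[L_P(1,\cdot)^{\i t}\right]\right|\le 1$ to discard all primes except those in a single block $c_q|t|\le|P|\le 2c_q|t|$ with $c_q$ a large constant, the two-term expansion \eqref{5} gives the uniform bound $\left|\E\left[L_P(1,\cdot)^{\i t}\right]\right|\le 1-t^2/(2|P|^2)\le\exp\left(-t^2/(2|P|^2)\right)$ with $t^2/|P|^2\asymp 1$ there, and multiplying the $\asymp|t|/\ln|t|$ factors in the block yields $\exp(-c|t|/\ln|t|)$ at once, with no distributional input about the cosine and with bounded $|t|$ handled by continuity. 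Replacing your window $p\le|t|$ by a dyadic window just above a large constant times $|t|$ closes the gap and completes your proof.
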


In the first part of this paper we prove the function field analogue of Nagoshi's results and study the class number, denoted as $h_P$, over function field, $\mathbb{F}_{q}(T)$ with $q \equiv 1 \left(\mod 4\right)$ and $P$ is a monic irreducible polynomial in $\f$.
\newline

In 1992, Hoffstein and Rosen \cite{H&R} investigated the average value of the class number $h_D$ when the average is taken over all monic polynomial of a fixed degree, they showed, for $M$ odd and positive, that
\begin{equation*}
\frac{1}{q^M} \sum_{\substack{ D \text{ monic}\\ \deg(D)=M}} h_D= \frac{\z(2)}{\z(3)}q^{(M-1)/2}-q^{-1},
\end{equation*}
where $\z(s)$ is the Riemann zeta function over $\f$. Note that Hoffstein and Rosen result is the directly comparable to Gauss's conjectures.
\newline

In a recent paper, Andrade \cite{jca} established an asymptotic formula for the mean value of the class number $h_{D}$ over function fields when the average is taken over $\h$, the set of all monic, square free polynomials of degree $2g+1$ in $\f$. Andrade proved that, as $g\to\infty$ we have

\begin{equation*}
\frac{1}{|\h|} \sum_{D\in\h}h_D \sim \z(2) q^g \prod_{\substack{P \text{irreducible}}} \left(1-\frac{1}{\left(|P|+1\right)|P|^2}\right).
\end{equation*}

In a more recent paper, Lumley \cite{Lum} investigated the distribution of $L(1,\x_D)$ for $D\in\mathbb{H}_n$ as $n\to \infty$. She computed large complex moments of the associated $L(1,\x_D)$ using the technique of random models that has been used successfully in the study of quadratic number fields. Lumley proved that we can express the complex moments of $L(1,\x_D)$ as follows.

\begin{theorem} \label{lumleythm1}
	Let $n$ a positive integer, and $z\in\C$ be such that $|z|\le \frac{n}{260\lo (n)\log\lo(n)}.$ Then 
	\begin{equation*}
	\frac{1}{\left|\mathbb{H}_n\right|} \sum_{D\in\mathbb{H}_n} L\left(1,\x_D\right)^z= \sum_{f \text{ monic}} \frac{d_k(f^2)}{|f|^2} \prod_{P\mid f} \left(1+\frac{1}{|P|}\right)^{-1} \left(1+O\left(\frac{1}{n^{11}}\right)\right).
	\end{equation*}
\end{theorem}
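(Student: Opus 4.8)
The plan is to compute the moment directly: expand the complex power of the $L$-function as a Dirichlet series weighted by the generalized divisor function, and then average term-by-term over $D \in \mathbb{H}_n$. Write $d_z$ for the coefficients determined by $\bigl(1 - \tfrac{u}{|P|}\bigr)^{-z} = \sum_{j \ge 0} d_z(P^{j})\,\tfrac{u^{j}}{|P|^{j}}$, equivalently by $\zeta_A(s)^{z} = \sum_{f \text{ monic}} d_z(f)|f|^{-s}$ (this is the $z$-th divisor function appearing in the statement). Then, for each fixed $D$, $L(1,\chi_D)^{z} = \sum_{f \text{ monic}} \frac{d_z(f)\,\chi_D(f)}{|f|}$. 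Since this series is only conditionally convergent at $s=1$, I would first replace $L(1,\chi_D)$ by its Euler product truncated to primes of degree at most $y$ and correspondingly restrict the divisor sum to $\deg f \le x$, with $x$ and $y$ to be optimized at the end. One must check that this truncation alters $L(1,\chi_D)^{z}$ negligibly except on a sparse set of extreme $D$, and that this exceptional set contributes little to the mean; this is the function field analogue of the large-deviation input of Granville--Soundararajan, and it is precisely where Lumley's random Euler product model furnishes the matching target value.

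Next I would interchange summation and averaging, reducing everything to the character average $\frac{1}{|\mathbb{H}_n|}\sum_{D \in \mathbb{H}_n}\chi_D(f)$. The orthogonality input I need is twofold. When $f = l^{2}$ is a perfect square, $\chi_D(l^{2}) = 1$ exactly when $\gcd(D,l)=1$, so the average counts the proportion of square-free $D$ coprime to $l$ and equals $\prod_{P \mid l}\bigl(1 + \tfrac{1}{|P|}\bigr)^{-1}$ up to a small error. When $f$ is not a square, I would invoke quadratic reciprocity (clean because $q \equiv 1 \bmod 4$) to rewrite $\chi_D(f)$ as a character in $D$ and then apply the Riemann Hypothesis for the associated function field $L$-functions, yielding square-root cancellation of size $O\bigl(q^{-n/2}\,\mathrm{poly}(\deg f)\bigr)$ after dividing by $|\mathbb{H}_n|$. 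Retaining only the square terms $f = l^{2}$ then reproduces exactly the claimed main term $\sum_{l}\frac{d_z(l^{2})}{|l|^{2}}\prod_{P \mid l}\bigl(1 + \tfrac{1}{|P|}\bigr)^{-1}$, which converges absolutely since the Dirichlet series $\sum_{l} d_z(l^{2})|l|^{-w}$ has abscissa of convergence $w=1$ while we evaluate at $w=2$.

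It remains to assemble the error terms, which is the crux of the argument. Three contributions arise: the truncation and extreme-value error from the first step; the tail of the absolutely convergent main sum over $\deg l^{2} > x$; and the accumulated non-square error $q^{-n/2}\sum_{\deg f \le x} \frac{|d_z(f)|\,\mathrm{poly}(\deg f)}{|f|}$. For the last I would bound $|d_z(f)| \le d_{|z|}(f)$ and use a mean-value estimate for $\sum_{\deg f \le x} d_{|z|}(f)/|f|$, which grows like a power of $x$ whose exponent is governed by $|z|$. The main obstacle is that $d_z(f)$ can be as large as $|z|^{\Omega(f)}$, so these divisor sums grow rapidly with $|z|$; balancing their growth against the saving $q^{-n/2}$ and against the truncation lengths forces $x$ and $y$ to be of order $n$ while constraining $|z|$ to lie below a threshold. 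Making this optimization explicit is what pins down the admissible range $|z| \le \frac{n}{260\log_q(n)\log\log_q(n)}$ and the final relative error $O(n^{-11})$; the constants $260$ and $11$ are artifacts of the balancing and carry no intrinsic significance.
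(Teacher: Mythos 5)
Your outline is, in substance, the same route that Lumley follows in \cite{Lum} and that this paper itself reproduces in Section 4 to prove the prime-discriminant analogue (Theorem \ref{lumley theorem}); note that the paper only quotes the stated theorem from \cite{Lum}, so the relevant internal comparison is that Section 4 argument. The skeleton matches: truncate the Euler product at primes of degree at most $M \asymp \log_q n$, expand $L(1,\chi_D)^z$ into a divisor sum shortened to $|f| \le q^{n/3}$ by Rankin's trick, average over $\mathbb{H}_n$, split into square and non-square $f$, kill the non-square terms by square-root cancellation, and identify the surviving square terms with the expectation of the random Euler product. Your treatment of the square terms is exactly right: for $f = l^2$ the average is the relative density of square-free $D$ coprime to $l$, which is the source of the factor $\prod_{P \mid l}\left(1+\frac{1}{|P|}\right)^{-1}$ (absent in the paper's prime case, where coprimality is automatic), and you correctly read the statement's $d_k$ as a typo for $d_z$.

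The one step you have wrong is the truncation. You propose to show the truncation is harmless ``except on a sparse set of extreme $D$'' and claim the random Euler product ``furnishes the matching target value'' there. The random model plays no role in that step: its only job is to identify the main term as $E\left[L(1,\cdot)^z\right] = \sum_{f} d_z(f^2)|f|^{-2}\prod_{P\mid f}(1+1/|P|)^{-1}$ (compare Lemmas \ref{E=dk} and \ref{L1z}). More importantly, in function fields no exceptional set is needed at all: Weil's Riemann Hypothesis yields the uniform, unconditional estimate $\ln L(1,\chi) = -\sum_{\deg P \le M}\ln\left(1-\chi(P)|P|^{-1}\right) + O\left(q^{-M/2}\deg(F)/M\right)$ for \emph{every} character (the paper's Lemma \ref{lnL(s,x)}), and with $M = N\log_q n$ the resulting error in the exponent is $\ll |z|\, n\, n^{-N/2}/M \ll n^{-(N/2-2)}$, which is exactly what opens up the range $|z| \ll n/(\log_q n \ln\log_q n)$ with $N=26$, $B=11$, $260 = 10N$ — confirming your remark that the constants are artifacts of balancing. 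Had you actually pursued the Granville--Soundararajan exceptional-set route, you would need a zero-density substitute you never supply; since you invoke Weil's bound elsewhere anyway, the repair is immediate. A minor bookkeeping point: after the square-free sieve $\mu^2(D)=\sum_{a^2\mid D}\mu(a)$, the non-square character-sum bound carries a factor exponential in $\deg f$ (of the shape $2^{\deg f}$ or a binomial coefficient), not merely a polynomial in $\deg f$; this is harmless because Rankin's truncation enforces $\deg f \le n/3$, so the saving $q^{-n/2}$ still dominates, but your error assembly should use the correct factor.
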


As consequence of the above theorem, Lumley stated that if we specialize $n$ to be $n=2g+1$ and $2g+2$ and letting the genus $g\to\infty$ we have the following results.

\begin{corollary}\label{lumleycor1}
	Let $z\in\C$ be such that $|z|\le \frac{g}{130 \lo(g)\log\lo(g)}.$ Then 
	\begin{equation*}
	\begin{split}
	\frac{1}{\left|\mathbb{H}_{2g+1}\right|}& \sum_{D\in\mathbb{H}_{2g+1}} h_D^z\\
	& = q^{gz} \sum_{f \text{ monic}} \frac{d_k(f^2)}{|f|^2} \prod_{P\mid f} \left(1+\frac{1}{|P|}\right)^{-1} \left(1+O\left(\frac{1}{g^{11}}\right)\right).
	\end{split}
	\end{equation*}
\end{corollary}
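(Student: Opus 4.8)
The plan is to reduce the statement to Theorem~\ref{lumleythm1} by means of the function-field class number formula, which for an odd-degree (prime-to-infinity) discriminant expresses $h_D$ as a single evaluation of the associated $L$-function, after which the corollary is a substitution $n=2g+1$.

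First I would record the arithmetic input. For $D\in\h$ monic, square-free of degree $2g+1$, the place at infinity ramifies in $\f(\sqrt D)$, so the field is ``imaginary'': its unit group is just $\fq^{*}$ and there is no regulator contribution. The $L$-function $\lL(u,\x_D)=\prod_{P}\bigl(1-\x_D(P)u^{\deg P}\bigr)^{-1}$ is then a polynomial in $u=q^{-s}$ of degree $2g$, it coincides with the numerator of the zeta function of the hyperelliptic curve $y^{2}=D$, and the class number formula reads $h_D=\lL(1,\x_D)$. Combining this with $L(1,\x_D)=\lL(q^{-1},\x_D)$ and the functional equation $\lL(u,\x_D)=(qu^{2})^{g}\lL\bigl(1/(qu),\x_D\bigr)$ evaluated at $u=q^{-1}$ gives $L(1,\x_D)=q^{-g}\lL(1,\x_D)=q^{-g}h_D$, that is,
\begin{equation*}
h_D=q^{g}\,L(1,\x_D),\qquad D\in\h .
\end{equation*}

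With this identity the corollary is essentially formal. Raising to the complex power $z$ gives $h_D^{z}=q^{gz}L(1,\x_D)^{z}$, so averaging over $\h$ yields
\begin{equation*}
\frac{1}{\left|\h\right|}\sum_{D\in\h}h_D^{z}
=q^{gz}\,\frac{1}{\left|\h\right|}\sum_{D\in\h}L(1,\x_D)^{z}.
\end{equation*}
Now I would apply Theorem~\ref{lumleythm1} with $n=2g+1$: the error term $O(n^{-11})$ becomes $O(g^{-11})$, and the right-hand Euler--Dirichlet sum is carried over verbatim. The only point requiring a genuine check is the admissibility of $z$: one verifies that, for $g$ large, the hypothesis $|z|\le g/\bigl(130\,\lo(g)\log\lo(g)\bigr)$ places $z$ inside Lumley's range $|z|\le n/\bigl(260\,\lo(n)\log\lo(n)\bigr)$ with $n=2g+1$, the constant $130$ (half of $260$) encoding $n=2g+1\sim 2g$.

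I do not expect a serious obstacle: the analytic content is entirely in Theorem~\ref{lumleythm1}, and what remains is the clean class number formula---whose only subtlety is the vanishing of the regulator in the odd-degree, ramified-at-infinity case---together with the bookkeeping for the error term. The one step that is not purely mechanical is the range comparison, since both the numerator and the logarithmic denominators grow in passing from $g$ to $n=2g+1$; confirming that the chosen constant genuinely yields containment for all sufficiently large $g$ is where a little care is needed.
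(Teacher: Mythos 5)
Your proposal is correct and follows essentially the same route as the paper: the corollary is obtained from Theorem \ref{lumleythm1} specialized at $n=2g+1$ together with the imaginary-quadratic class number identity $h_D=q^{g}L(1,\chi_D)$ (the odd-degree case of the formula the paper records as (\ref{hP}), which you re-derive via the functional equation rather than citing Rosen), exactly as the paper does for its prime analogues in Section 4. Your flagged worry about the range is the only real wrinkle, and it is purely at the level of constants: with the factors as literally written, $g/\bigl(130\log_q g\,\ln\log_q g\bigr)$ exceeds $(2g+1)/\bigl(260\log_q(2g+1)\,\ln\log_q(2g+1)\bigr)$ by a factor $1+O(1/\log g)$, a harmless slack that the paper (following Lumley) also passes over silently.
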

	
	\begin{corollary}\label{lumleycor2}
		Let $z\in\C$ be such that $|z|\le \frac{g}{130 \lo(g)\log\lo(g)}.$ Then 
		\begin{equation*}
		\begin{split}
		\frac{1}{\left|\mathbb{H}_{2g+1}\right|} & \sum_{D\in\mathbb{H}_{2g+1}} \left(h_DR_D\right)^z\\
		& = \left(\frac{q^{g+1}}{q-1}\right)^{z} \sum_{f \text{ monic}} \frac{d_k(f^2)}{|f|^2} \prod_{P\mid f} \left(1+\frac{1}{|P|}\right)^{-1} \left(1+O\left(\frac{1}{g^{11}}\right)\right),
		\end{split}
		\end{equation*}
where $R_{D}$ is the regulator of the associated quadratic function field.
\end{corollary}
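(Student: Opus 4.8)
The plan is to reduce the complex moments of $\left(h_D R_D\right)^{z}$ to the moments of $L(1,\x_D)^{z}$ already computed in Theorem \ref{lumleythm1}, by invoking the analytic class number formula over $\f$. The argument therefore rests on a single pointwise identity expressing $h_D R_D$ as an explicit constant multiple of $L(1,\x_D)$; granting this, the asymptotic follows by raising to the $z$-th power and averaging term by term over $\mathbb{H}_{2g+1}$.

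First I would record the class number formula. For $D\in\mathbb{H}_{2g+1}$ write the associated $L$--polynomial as $\lL(u,\x_D)=\prod_{i}\left(1-\alpha_i u\right)$, whose inverse roots satisfy $|\alpha_i|=\sqrt{q}$ and hence pair up as $\left\{\alpha,\,q/\alpha\right\}$, and recall that the analytic value is $L(1,\x_D)=\lL\!\left(q^{-1},\x_D\right)$. Pairing the factors and using $\alpha\cdot(q/\alpha)=q$ gives the functional equation relation $\lL(1,\x_D)=q^{g}\lL\!\left(q^{-1},\x_D\right)=q^{g}L(1,\x_D)$. Feeding this into the analytic class number formula, which relates the residue of $\zeta_K(s)=\z(s)L(s,\x_D)$ at $s=1$ to $h_D R_D$, the genus factor $q^{g}$ combines with the factor $\tfrac{q}{q-1}=\left(1-q^{-1}\right)^{-1}$ coming from the residue of $\z(s)$ (equivalently from the order $q-1$ of the constant units $\FF_q^{\ast}$) to produce
\begin{equation*}
h_D R_D=\frac{q^{g+1}}{q-1}\,L(1,\x_D),\qquad D\in\mathbb{H}_{2g+1}.
\end{equation*}
Since $h_D$, $R_D$ and $L(1,\x_D)$ are positive reals, there is no branch ambiguity in forming complex powers.

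Granting this identity, raising to the power $z\in\C$ and averaging gives
\begin{equation*}
\frac{1}{\left|\mathbb{H}_{2g+1}\right|}\sum_{D\in\mathbb{H}_{2g+1}}\left(h_D R_D\right)^{z}
=\left(\frac{q^{g+1}}{q-1}\right)^{z}\frac{1}{\left|\mathbb{H}_{2g+1}\right|}\sum_{D\in\mathbb{H}_{2g+1}}L(1,\x_D)^{z}.
\end{equation*}
Applying Theorem \ref{lumleythm1} with $n=2g+1$ to the inner average replaces it by $\sum_{f\text{ monic}}\tfrac{d_k(f^2)}{|f|^2}\prod_{P\mid f}\left(1+|P|^{-1}\right)^{-1}$ with relative error $1+O\!\left(n^{-11}\right)=1+O\!\left(g^{-11}\right)$, while the admissible range $|z|\le\tfrac{n}{260\lo(n)\log\lo(n)}$ becomes $|z|\le\tfrac{g}{130\lo(g)\log\lo(g)}$ upon setting $n=2g+1$ and using $\lo(2g+1)=\lo(g)+O(1)$. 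This is exactly the claimed formula.

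The main obstacle is the first step: establishing the class number formula with the precise constant $\tfrac{q^{g+1}}{q-1}$. This requires fixing the definition of the regulator $R_D$ in the function field setting, confirming the degree and functional equation of $\lL(u,\x_D)$ for the family at hand, and tracking carefully how the residue of $\z(s)$ together with the constant field units $\FF_q^{\ast}$ contribute the factor $\tfrac{q}{q-1}$. The remaining steps---interchanging the power with the finite sum and transferring both the range of $z$ and the error term from Theorem \ref{lumleythm1}---are routine.
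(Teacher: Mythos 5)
Your overall route --- a pointwise class number identity turning $h_DR_D$ into an explicit constant times $L(1,\x_D)$, then raising to the $z$-th power, factoring the constant out of the average, and citing Theorem \ref{lumleythm1} --- is exactly the intended derivation; the paper proves its own prime analogue (Corollary \ref{lumley coral2}) in precisely this way via equation (\ref{hP}). The genuine gap is in your key identity, which is false for the family you derive it for. The ``$\mathbb{H}_{2g+1}$'' in the statement is a typo for $\mathbb{H}_{2g+2}$ (the surrounding text says the two corollaries come from specializing $n=2g+1$ and $n=2g+2$, and this is how the result appears in Lumley): when $\deg D=2g+1$ is odd, the infinite place of $\FF_q(T)$ ramifies in $\FF_q(T)(\sqrt{D})$, the unit group of the integral closure is just $\FF_q^{\times}$, the unit rank is zero and there is no regulator at all; the correct identity there is $h_D=q^{g}L(1,\x_D)$ (first case of (\ref{hP}), i.e.\ Theorem 17.8 of \cite{Rosen}), which is what underlies Corollary \ref{lumleycor1}. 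Your derivation manufactures the claimed constant by welding together two incompatible cases: the factor $q^{g}$ from $\lL(1)=q^{g}\lL(q^{-1})$ belongs to the odd-degree computation ($\deg\lL=2g$, $|D|=q^{2g+1}$), while the factor $q/(q-1)$ from the residue and the constant units belongs to the real case ($\deg D$ even, $\operatorname{sgn}(D)=1$), where the regulator actually appears. That $q^{g}\cdot\frac{q}{q-1}=\frac{q^{g+1}}{q-1}$ matches the stated constant is a coincidence created by the typo; the identity $h_DR_D=\frac{q^{g+1}}{q-1}L(1,\x_D)$ simply does not hold on $\mathbb{H}_{2g+1}$ (it would force $R_D=\frac{q}{q-1}$, which is not even possible), and no averaging step can rescue the statement as literally printed.

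The repair is short and is what the paper/Lumley intend: work over $\mathbb{H}_{2g+2}$, where every $D$ is monic so $\operatorname{sgn}(D)=1$, quote the second case of (\ref{hP}), namely $L(1,\x_D)=(q-1)|D|^{-1/2}h_DR_D$, and deduce $h_DR_D=\frac{q^{g+1}}{q-1}L(1,\x_D)$ directly from $|D|^{1/2}=q^{g+1}$ --- no functional-equation manipulation is needed, since (\ref{hP}) is a quotable theorem. Then apply Theorem \ref{lumleythm1} with $n=2g+2$, noting $O(n^{-11})=O(g^{-11})$ and converting the range of $z$ as you did (up to the same harmless $\lo(2g+2)$ versus $\lo(g)$ slack that the paper itself ignores). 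Two further points you carried over without comment: the $d_k(f^2)$ in the statement should read $d_z(f^2)$ (another inherited typo), and your honest flag that ``the main obstacle is establishing the class number formula with the precise constant'' was well placed --- but the sketch you gave of that step is exactly where the argument fails. The remaining steps of your proposal (positivity of $h_D$, $R_D$, $L(1,\x_D)$ removing branch ambiguity, and factoring the constant out of the finite average) are correct.
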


In the second part of this paper we will adapt Lumley's result and investigate the complex moment of $L(1,\x_P)$ in a large uniform range, where $\x_P$ varies over quadratic characters associated to irreducible polynomials $P$ of degree $n$ over $\mathbb{F}_q$ as $n\to\infty.$

 \section{Preparations}

Before we state the main results of this paper we first introduce some
notation and auxiliary results.
Let $\mathbb{F}_q$ be a finite field with $q$ elements where $q$ is a prime power. We denote by $A = \f$ the polynomial ring over $\mathbb{F}_q$ and the norm of a polynomial $f\in A$ is defined to be $|f| = q^{\deg(f)}$. 
\newline

Let $\mathbb{P}_{n}$ to denote the set of all monic irreducible polynomials in $\mathbb{F}_{q}[T]$ of degree $n$ and let $\chi_{P}(f)$ to denote the quadratic character associated to a monic irreducible polynomial $P$, the value of the character is defined in terms of the Legendre symbol for polynomials over finite fields. The associated Dirichlet $L$-function is defined in the usual way as

$$L(s,\chi_{P})=\sum_{f \ \text{monic}}\frac{\chi_{P}(f)}{|f|^{s}}.$$

For the remainder of this paper the following notations will be fixed. Let  $\log$ denotes the logarithm in the base $q$, $\ln$ is the natural logarithm and $\log_j$ ( respectively $\ln_j$) represents the $j$-fold iterated logarithm. Finally, let $P$ be an irreducible (prime) polynomial in $A$. 
\newline

Our first auxiliary result is the following.

%
%
%
%
%
%
%

%
%
%
%
%

\begin{proposition}(``Approximate" functional equation)
Let $P\in\mathbb{P}_{2g+1}$, then we have that
 
	\begin{equation}\label{aproxL(1,xp)k}
	\begin{split}
	L(1,\x_P)^k =& \sum_{\substack{f_1 \text{ monic} \\ \deg(f_1)\le kg}} \frac{\x_P(f_1)d_k(f_1)}{|f_1|} + q^{-kg} \sum_{\substack{f_2 \text{ monic} \\ \deg(f_2)\le kg-1}}  \x_P(f_2) d_k(f_2).
	\end{split}
	\end{equation}	
%
\end{proposition}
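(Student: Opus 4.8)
The plan is to treat this so-called ``approximate'' functional equation as what it really is over function fields, namely an \emph{exact} polynomial identity, obtained by combining the Dirichlet-series expansion of $L(s,\x_P)^k$ with the fact that $L(s,\x_P)$ is a polynomial in $u=q^{-s}$ together with its functional equation. First I would record the Dirichlet-series shape of the $k$-th power: since $\x_P$ is completely multiplicative and $|\cdot|$ is multiplicative, expanding
\begin{equation*}
L(s,\x_P)^k=\sum_{f_1,\dots,f_k\text{ monic}}\frac{\x_P(f_1\cdots f_k)}{|f_1\cdots f_k|^{s}}=\sum_{f\text{ monic}}\frac{\x_P(f)\,d_k(f)}{|f|^{s}},
\end{equation*}
where $d_k(f)$ counts ordered factorisations of $f$ into $k$ monic factors. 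Setting $u=q^{-s}$ and writing $\lL(u,\x_P)=\sum_{f}\x_P(f)u^{\deg f}$, this gives $\lL(u,\x_P)^k=\sum_{n\ge 0}c_n u^{n}$ with $c_n=\sum_{\deg f=n}\x_P(f)\,d_k(f)$, and evaluation at $s=1$, i.e.\ $u=q^{-1}$, turns $u^{\deg f}$ into $|f|^{-1}$.

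The structural input I would invoke is that, because $P\in\p$ has odd degree $2g+1$ and is in particular squarefree, $\lL(u,\x_P)$ is a polynomial in $u$ of degree $2g$ (the genus of the associated hyperelliptic curve) satisfying the functional equation
\begin{equation*}
\lL(u,\x_P)=\left(qu^{2}\right)^{g}\lL\!\left(\tfrac{1}{qu},\x_P\right).
\end{equation*}
Raising to the $k$-th power yields $\lL(u,\x_P)^{k}=(qu^{2})^{kg}\,\lL(1/(qu),\x_P)^{k}$, so $\lL(u,\x_P)^{k}$ is a polynomial of degree $2kg$. Matching coefficients of $u^{n}$ on both sides of this relation (after substituting $\lL(1/(qu),\x_P)^{k}=\sum_{m=0}^{2kg}c_m(qu)^{-m}$ and reindexing) produces the coefficient symmetry
\begin{equation*}
c_n=q^{\,n-kg}\,c_{2kg-n},\qquad 0\le n\le 2kg,
\end{equation*}
which is the engine of the whole argument.

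Finally I would specialise to $u=q^{-1}$ and split the resulting finite sum at degree $kg$:
\begin{equation*}
L(1,\x_P)^{k}=\sum_{n=0}^{2kg}c_n q^{-n}=\sum_{n=0}^{kg}c_n q^{-n}+\sum_{n=kg+1}^{2kg}c_n q^{-n}.
\end{equation*}
The first sum is immediately $\sum_{\deg f_1\le kg}\x_P(f_1)d_k(f_1)/|f_1|$, the first term of the claim. For the tail I would substitute the symmetry $c_n=q^{n-kg}c_{2kg-n}$, so that $c_n q^{-n}=q^{-kg}c_{2kg-n}$, and reindex by $m=2kg-n$; as $n$ runs over $kg+1,\dots,2kg$ the index $m$ runs over $0,\dots,kg-1$, giving exactly $q^{-kg}\sum_{\deg f_2\le kg-1}\x_P(f_2)d_k(f_2)$, the second term. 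The only genuine obstacle is the structural step: one must justify with care that in the prime, \emph{odd-degree} case $\deg P=2g+1$ the $L$-polynomial has degree precisely $2g$ and obeys the stated functional equation with trivial sign and no extra factor from the place at infinity; once that is pinned down (by the standard theory of Dirichlet $L$-functions over $\f$, or cited from it), the remainder is the bookkeeping of splitting and reindexing carried out above.
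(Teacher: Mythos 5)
Your proposal is correct and follows essentially the same route as the paper: the Dirichlet-series expansion of $L(s,\x_P)^k$ with coefficients $d_k(f)$, the degree-$2g$ $L$-polynomial with functional equation $L_{C_P}(u)=(qu^2)^{g}L_{C_P}\left(\frac{1}{qu}\right)$ raised to the $k$-th power, and the resulting coefficient symmetry $a_n=q^{\,n-kg}a_{2kg-n}$ used to fold the terms of degree above $kg$ into the second sum. Your explicit specialisation at $u=q^{-1}$ with the reindexing $m=2kg-n$ is merely a cleaner write-up of the paper's coefficient comparison (which contains minor typographical slips in its summation indices), so the two arguments coincide.
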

\begin{proof}
	Recall that,
	
	\begin{equation*}
	\begin{split}
	L(s,\x_P)= & \sum_{\substack{f \text{ monic}}} \frac{\x_P(f)}{|f|^s}\\
	=& \sum_{n=0}^\infty q^{-sn} \sum_{\substack{f \text{ monic} \\ \deg(f)=n}}\x_P(f).
	\end{split}
	\end{equation*}
	Therefore, we have
	
	\begin{equation}\label{Lk1}
	\begin{split}
	L(s,\x_P)^k
	=& \sum_{n=0}^\infty  q^{-sn} \sum_{\substack{f \text{ monic} \\ \deg(f)=n}}\x_P(f)d_k(f),
	\end{split}
	\end{equation}
	where $d_k(f)$ is the number of ways that $f$ can be expressed as a product of $k$ monic (taking order into account).
	Since $P\in\mathbb{P}_{2g+1}$, we have that $L(s,\x_P)$ is a polynomial of degree $2g$ and therefore 
	
	\begin{equation}\label{Lk2}
	L(s,\x_P)=L_{C_P}(u).
	\end{equation}
	Moreover,
	
	\begin{equation*}
	L_{C_P}(u)=\left(qu^2\right)^g L_{C_P}\left(\frac{1}{qu}\right),
	\end{equation*}
	and so
	
	\begin{equation*}
	L_{C_P}(u)^k=\left(qu^2\right)^{kg} L_{C_P}\left(\frac{1}{qu}\right)^k.
	\end{equation*}
	Let $L_{C_P}(u)^k=\sum_{n=0}^{kg}a_nu^n,$ then we have
	
	\begin{equation*}
	\begin{split}
	\sum_{n=0}^{2kg}a_nu^n 
	=& \sum_{r=0}^{2kg}a_{2kg-r} q^{r-kg}u^{r}.
	\end{split}
	\end{equation*}
	Comparing the coefficients we find that $a_n= a_{2kg-r} q^{r-kg}$ and we can write 
	
	\begin{equation}\label{L(1,xP)k}
	\begin{split}
	L(s,\x_P)^k 
	=&  \sum_{n=0}^{kg} a_n u^n + (qu^2)^{kg}\sum_{m=0}^{kg-1}a_{m} q^{-m} u^{-m}.
	\end{split}
	\end{equation}
	From (\ref{Lk1}) and (\ref{Lk2}) we can write the coefficients $a_n$ as
	
	\begin{equation*}
	a_n=\sum_{\substack{f \text{ monic} \\ \deg(f)=n}} \x_P(f)d_k(f)
	\end{equation*}
and this proves the result.
\end{proof}

The next result is the well-known prime polynomial theorem.

\begin{theorem}(Prime Polynomial Theorem)\label{PNT}\\
	The number of monic irreducible polynomials in $A=\mathbb{F}_q[T]$ of degree $n$ is 
	
	\begin{equation*}
	\pi_ A(n)=\frac{q^n}{n}+O\Big(\frac{q^{n/2}}{n}\Big).
	\end{equation*}
\end{theorem}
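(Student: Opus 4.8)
The plan is to derive an exact closed formula for $\pi_A(n)$ from the zeta function of $A=\f$ and then isolate the main term. First I would record the two expressions for the zeta function. Counting monic polynomials by degree, and writing $u=q^{-s}$, gives for $|u|<1/q$
\[
Z(u):=\sum_{f\text{ monic}}u^{\deg f}=\sum_{n=0}^\infty q^n u^n=\frac{1}{1-qu},
\]
since there are exactly $q^n$ monic polynomials of degree $n$. On the other hand, unique factorisation in $A$ yields the Euler product
\[
Z(u)=\prod_{P\text{ irreducible}}\bigl(1-u^{\deg P}\bigr)^{-1}=\prod_{d\ge 1}\bigl(1-u^d\bigr)^{-\pi_A(d)}.
\]

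Next I would take logarithms of both expressions and compare power–series coefficients. From the closed form, $\log Z(u)=-\log(1-qu)=\sum_{n\ge 1}\frac{q^n}{n}u^n$, while from the Euler product, $\log Z(u)=\sum_{d\ge 1}\pi_A(d)\sum_{m\ge 1}\frac{u^{dm}}{m}$. Matching the coefficient of $u^n$ (writing $n=dm$) gives the exact identity
\[
\sum_{d\mid n}d\,\pi_A(d)=q^n,
\]
and Möbius inversion then produces the closed formula
\[
\pi_A(n)=\frac{1}{n}\sum_{d\mid n}\mu\!\left(\frac{n}{d}\right)q^d.
\]

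Finally I would extract the main term, coming from $d=n$ and equal to $q^n/n$, and bound the rest. Every proper divisor $d\mid n$ satisfies $d\le n/2$, so
\[
\left|\pi_A(n)-\frac{q^n}{n}\right|\le\frac{1}{n}\sum_{1\le d\le n/2}q^d\le\frac{1}{n}\cdot\frac{q^{n/2+1}}{q-1}=O\!\left(\frac{q^{n/2}}{n}\right),
\]
which is exactly the claimed estimate. This argument is essentially bookkeeping once the two forms of $Z(u)$ are in hand, so no step presents a serious obstacle. The only point needing a little care is the coefficient comparison: one must track the weight $1/m=d/n$ correctly when matching the term $u^{dm}$ from the Euler product against $u^n$, which is what turns the naive count into the weighted sum $\sum_{d\mid n}d\,\pi_A(d)$ and hence, after inversion, isolates $q^n/n$. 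The error bound is then immediate from the geometric decay of $q^d$ over $d\le n/2$.
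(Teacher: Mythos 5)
Your proof is correct and complete: the two expressions for $Z(u)=\frac{1}{1-qu}$, the logarithmic comparison giving $\sum_{d\mid n}d\,\pi_A(d)=q^n$, the M\"obius inversion, and the geometric bound on proper divisors $d\le n/2$ are all handled accurately, including the subtle weight $1/m=d/n$ in the coefficient matching. The paper itself states this theorem without proof, as it is the classical Prime Polynomial Theorem from Rosen's \emph{Number Theory in Function Fields}; your argument is precisely the standard textbook derivation of that result, so there is nothing to reconcile.
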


\begin{lemma}\label{dl}
	Let $f$ be a monic polynomial in $\f$, $k\ge 2$, and $d_k(f)$ be the $k$-fold divisor function. Then
	
	\begin{equation*}\label{dl2}
	\begin{split}
	\sum_{\substack{f \text{ monic}\\ \deg(f)=n}}d_k(f) &=  \frac{1}{(k-1)!} q^n n^{k-1} + O(q^nn^{k-2}).
	\end{split}
	\end{equation*}
%
\end{lemma}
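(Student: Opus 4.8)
The plan is to recognise this sum as an exact power-series coefficient and then expand a single binomial coefficient, so that the statement follows by elementary algebra rather than by analytic machinery. First I would recall that the zeta function of $A=\f$ is
\[
\z(s)=\sum_{\substack{f\text{ monic}}}\frac{1}{|f|^s}=\sum_{n=0}^\infty \frac{q^n}{q^{sn}}=\frac{1}{1-q^{1-s}},
\]
the middle equality using the elementary fact that there are exactly $q^n$ monic polynomials of degree $n$. Writing $u=q^{-s}$ this is the rational function $(1-qu)^{-1}$, which is the cleanest feature of the function field setting: the divisor generating series is genuinely a formal power series in $u$ with no analytic subtleties.

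Next I would use the multiplicativity of $d_k$ to identify its Dirichlet series with $\z(s)^k$; concretely, exactly as in equation (\ref{Lk1}) but with $\x_P$ replaced by the trivial character, one has
\[
\z(s)^k=\sum_{\substack{f\text{ monic}}}\frac{d_k(f)}{|f|^s}=\sum_{n=0}^\infty\Biggl(\sum_{\substack{f\text{ monic}\\ \deg(f)=n}}d_k(f)\Biggr)u^n.
\]
On the other hand $\z(s)^k=(1-qu)^{-k}$, and the negative binomial theorem gives $(1-qu)^{-k}=\sum_{n=0}^\infty\binom{n+k-1}{k-1}q^n u^n$. Comparing coefficients of $u^n$ then yields the exact identity
\[
\sum_{\substack{f\text{ monic}\\ \deg(f)=n}}d_k(f)=\binom{n+k-1}{k-1}q^n.
\]

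Finally I would extract the asymptotics from this closed form. Since
\[
\binom{n+k-1}{k-1}=\frac{1}{(k-1)!}\prod_{j=1}^{k-1}(n+j)=\frac{1}{(k-1)!}\Bigl(n^{k-1}+\tbinom{k}{2}n^{k-2}+O_k(n^{k-3})\Bigr),
\]
multiplying by $q^n$ gives the claimed main term $\frac{1}{(k-1)!}q^n n^{k-1}$ with error $O(q^n n^{k-2})$. I do not expect a genuine obstacle here: unlike the number field analogue, where $\sum_{m\le x}d_k(m)$ is obtained by integrating $\zeta(s)^k$ against a Perron kernel and carefully controlling the order-$k$ pole at $s=1$, the function field sum is literally one binomial coefficient times $q^n$, so the only labour is the polynomial expansion of that coefficient, and even the implied constant can be made explicit (its leading contribution is $\binom{k}{2}/(k-1)!$). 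I would remark that the hypothesis $k\ge 2$ is used only so that the error term $n^{k-2}$ is meaningful, the case $k=1$ reducing to the exact count $q^n$.
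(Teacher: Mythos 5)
Your proof is correct and is essentially the same argument as the paper's source: the paper gives no proof of Lemma \ref{dl} itself, deferring to Lemma 2.2 of \cite{and-bary-rudnik}, which establishes precisely your exact identity $\sum_{\deg(f)=n}d_k(f)=\binom{n+k-1}{k-1}q^n$ via the same generating-function computation $\z(s)^k=(1-qu)^{-k}$ with $u=q^{-s}$. Your binomial expansion then yields the stated asymptotic (in fact slightly more, with the explicit secondary coefficient $\binom{k}{2}/(k-1)!$), so nothing is missing.
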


For Lemma \ref{dl} see Lemma 2.2 in \cite{and-bary-rudnik} 
. Our next result is quoted from Rosen \cite[Chapter 17]{Rosen}.

\begin{lemma}\label{Rosentheorem17.4}
	Let $A^+$ be the set of monic polynomials in $\f$ and 
	
	\begin{equation*}
	B=\left\{s\in\mathbb{C}:-\frac{\pi \i}{\ln q}\le \mathfrak{I}(s) \le \frac{\pi \i}{\ln q}\right\}.
	\end{equation*} 
	Let $f:A^+\to \mathbb{C}$, and $\zeta_f(s)$ be the corresponding Dirichlet series. Suppose this series converges absolutely in the region $\mathfrak{R}(s)>1$ and is holomorphic in the region $\{s\in B:\mathfrak{R}(s)=1\}$ except for a single pole of order $r$ at $s=1$. Let $\alpha=\lim_{s\to 1} (s-1)^r\zeta_f(s).$ Then there is a $\delta < 1$ and constant $c_{-i}$ with $1\le i\le r$ such that 
	
	\begin{equation*}
	\sum_{\deg(D)=n} f(D)=q^n \left(\sum_{i=1}^r c_{-i} \binom{n+i-1}{i-1} (-q)^i\right)+O\left(q^{\delta n}\right).
	\end{equation*}
	The sum in parenthesis is a polynomial in $n$ of degree $r-1$ with leading term 
	
	\begin{equation*}
	\frac{(\ln q)^r}{(r-1)!}\alpha n^{r-1}.
	\end{equation*}
\end{lemma}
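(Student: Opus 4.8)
The plan is to replace the variable $s$ by $u=q^{-s}$, so that $\zeta_f(s)$ becomes a power series whose $n$-th coefficient is exactly the sum we want, and then to extract that coefficient by a Cauchy contour integral that we push past the pole. Grouping the Dirichlet series by degree and using $|D|^{-s}=q^{-s\deg D}$, I would write
\[
\zeta_f(s)=\sum_{n=0}^\infty\Big(\sum_{\deg(D)=n}f(D)\Big)q^{-sn}=\sum_{n=0}^\infty b_n u^n=:\mathcal Z(u),\qquad u=q^{-s},
\]
with $b_n=\sum_{\deg(D)=n}f(D)$. The map $s\mapsto u=q^{-s}$ has period $2\pi\i/\ln q$, and the strip $B$ is precisely a fundamental domain for it; under this map the half-plane $\Re(s)>1$ goes to the punctured disc $0<|u|<1/q$, the line $\Re(s)=1$ goes bijectively (within $B$) onto the circle $|u|=1/q$, and $s=1$ corresponds to $u=1/q$.

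Consequently the hypotheses translate into statements about $\mathcal Z$: it is holomorphic on $|u|<1/q$, and on the circle $|u|=1/q$ it is holomorphic except for a single pole of order $r$ at $u=1/q$. Since the assumed continuation is holomorphic on the compact segment $\{s\in B:\Re(s)=1\}$ away from $s=1$, it is holomorphic on an open neighbourhood of that segment, which in the $u$-variable means that $\mathcal Z$ continues holomorphically to an annulus $1/q\le|u|\le q^{-\delta}$ for some $\delta<1$, its only singularity there being the pole at $u=1/q$. I would then isolate the principal part,
\[
\mathcal Z(u)=\sum_{i=1}^r\frac{c_{-i}}{(u-1/q)^i}+H(u),\qquad H\text{ holomorphic on }|u|\le q^{-\delta},
\]
which defines the constants $c_{-i}$.

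Next I would recover $b_n$ by Cauchy's formula, $b_n=\frac{1}{2\pi\i}\oint_{|u|=\rho}\mathcal Z(u)u^{-n-1}\,\d u$ for $\rho<1/q$, and push the contour out to $|u|=q^{-\delta}$, picking up the residue at $u=1/q$:
\[
b_n=-\operatorname*{Res}_{u=1/q}\frac{\mathcal Z(u)}{u^{n+1}}+\frac{1}{2\pi\i}\oint_{|u|=q^{-\delta}}\frac{\mathcal Z(u)}{u^{n+1}}\,\d u.
\]
On $|u|=q^{-\delta}$ the function $\mathcal Z$ is bounded, so the remaining integral is $O\big((q^{-\delta})^{-n}\big)=O(q^{\delta n})$, exactly the error term claimed. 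For the main term I would use the elementary residue computation $\operatorname*{Res}_{u=1/q}(u-1/q)^{-i}u^{-n-1}=(-1)^{i-1}\binom{n+i-1}{i-1}q^{n+i}$, which gives
\[
-\operatorname*{Res}_{u=1/q}\frac{\mathcal Z(u)}{u^{n+1}}=q^n\sum_{i=1}^r c_{-i}\binom{n+i-1}{i-1}(-q)^i .
\]
Each $\binom{n+i-1}{i-1}$ is a polynomial in $n$ of degree $i-1$, so the bracket is a polynomial of degree $r-1$, with its top-degree contribution coming from $i=r$.

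To identify the leading coefficient I would compare the two descriptions of the pole. From the local change of variables $u-1/q=-\tfrac{\ln q}{q}(s-1)+O((s-1)^2)$ near $s=1$, together with $\alpha=\lim_{s\to1}(s-1)^r\zeta_f(s)$, one gets $c_{-r}(-q)^r=\alpha(\ln q)^r$, so the coefficient of $n^{r-1}$ in the bracket is $c_{-r}(-q)^r/(r-1)!=\alpha(\ln q)^r/(r-1)!$, as stated. I expect the only genuinely delicate point to be the second paragraph: correctly reading off that $B$ is a fundamental domain so that no extra poles appear on $|u|=1/q$, and invoking compactness to continue $\mathcal Z$ to a slightly larger closed disc with the single pole, which is what legitimises the contour shift. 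Once that is in place, the residue bookkeeping and the error estimate are routine.
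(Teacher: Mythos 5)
Your proof is correct, but note that the paper does not actually prove this lemma: it is quoted without proof from Rosen \cite[Chapter 17]{Rosen} (Theorem 17.4 there), and your argument is in substance Rosen's standard one --- the same change of variable $u=q^{-s}$, the same extension to a disc of radius $q^{-\delta}$ with the principal-part decomposition defining the $c_{-i}$, and the same identification $c_{-r}(-q)^r=\alpha(\ln q)^r$ via $u-1/q=-\tfrac{\ln q}{q}(s-1)+O((s-1)^2)$. The only cosmetic difference is that Rosen extracts the $n$-th coefficient by expanding each $(u-1/q)^{-i}$ in a binomial series and bounding the coefficients of the holomorphic remainder $H$ by Cauchy estimates, whereas you shift the contour and pick up the residue; these are the same computation, and your handling of the one delicate point (compactness of the segment $\{s\in B:\Re(s)=1\}$ giving the annulus continuation with no poles on $|u|=1/q$ other than $u=1/q$) is sound.
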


\begin{lemma} \label{Dlk}
	Let $f$ be a monic polynomial in $\f$, and $d(f)$ be the number of monic divisors of $f$. Let $\zeta_{d_k}(s)$ be the corresponding Dirichlet series. Then $\zeta_{d_k}(s)$ converges absolutely in the region $\mathfrak{R}(s)>1$ and holomorphic in the region $\left\{s\in B, \mathfrak{R}(s)=1\right\}$ except for a pole of order $k(k+1)/2$ at $s=1$. Let $\rho_k=\lim_{s\to\infty}(s-1)^{\frac{k(k+1)}{2}} \zeta_{d_k}(s)$, then for a fixed $\epsilon>0$ and constants $c_{-i}$ with $1\le i\le\frac{k(k+1)}{2}$ we have
	
	\begin{equation}\label{Dl1}
	\sum_{\substack{f \text{ monic}\\ \deg(f)=n}}d_k(f^2)= q^n \left(\sum_{i=1}^{\frac{k(k+1)}{2}} c_{-i} \binom{n+i-1}{i-1} \left(-q\right)^i\right)+O\left(q^{\epsilon n}\right).
	\end{equation}
	The sum is parenthesis is a polynomial in $n$ of degree $\frac{k(k+1)}{2}-1$ with leading term 
	
	\begin{equation}\label{leading}
	\frac{A_k(1)}{\left(\frac{k(k+1)}{2}-1\right)!} n^{\frac{k(k+1)}{2}-1},
	\end{equation}
	where the definition of $A_{k}(s)$ is presented in the proof of this lemma. When $k=2$, we can write 
	
	\begin{equation}\label{Dl2}
	\sum_{\substack{f \text{ monic}\\ \deg(f)=n}}d(f^2)=\{1+\frac{1}{2} (3+q^{-1}) n +\frac{1}{2} (1-q^{-1}) n^2 \} q^n.
	\end{equation}
\end{lemma}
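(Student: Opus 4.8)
The plan is to exploit the multiplicativity of $f\mapsto d_k(f^2)$ and then feed the resulting Dirichlet series $\zeta_{d_k}(s)=\sum_{f\text{ monic}}d_k(f^2)|f|^{-s}$ into Rosen's Lemma~\ref{Rosentheorem17.4}; the whole task thus reduces to locating the pole of $\zeta_{d_k}(s)$ at $s=1$, determining its order, and identifying the residue-type constant $A_k(1)$. Since $d_k(f^2)$ is multiplicative I would first record the Euler product
\begin{equation*}
\zeta_{d_k}(s)=\prod_{P}\sum_{j\ge 0}\frac{d_k(P^{2j})}{|P|^{js}},
\end{equation*}
the product running over monic irreducibles $P$.

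Next I would evaluate the local factor. As $d_k(P^{2j})=\binom{2j+k-1}{k-1}$ counts ordered factorisations of $P^{2j}$ into $k$ monic factors, the identity $\sum_{m\ge 0}\binom{m+k-1}{k-1}x^m=(1-x)^{-k}$ together with its even part in $x$ gives, with $u=|P|^{-s}$,
\begin{equation*}
\sum_{j\ge 0}\binom{2j+k-1}{k-1}u^{j}=\frac{(1+\sqrt{u})^{k}+(1-\sqrt{u})^{k}}{2(1-u)^{k}}=\frac{Q_k(u)}{(1-u)^{k}},
\end{equation*}
where $Q_k(u):=\sum_{l=0}^{\lfloor k/2\rfloor}\binom{k}{2l}u^{l}$. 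Hence $\zeta_{d_k}(s)=\prod_P Q_k(|P|^{-s})(1-|P|^{-s})^{-k}$, which converges absolutely for $\mathfrak{R}(s)>1$.

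The heart of the argument is the factorisation $\zeta_{d_k}(s)=\z(s)^{k(k+1)/2}A_k(s)$, in which I would define
\begin{equation*}
A_k(s):=\prod_{P}Q_k(|P|^{-s})\bigl(1-|P|^{-s}\bigr)^{k(k-1)/2}.
\end{equation*}
The step I expect to be the main obstacle is to verify that this residual product converges absolutely in a half-plane extending strictly to the left of the line $\mathfrak{R}(s)=1$. Since $Q_k(u)=1+\binom{k}{2}u+O(u^{2})$ and $(1-u)^{k(k-1)/2}=1-\binom{k}{2}u+O(u^{2})$, the linear terms cancel and each local factor equals $1+O(|P|^{-2s})$; comparing with $\sum_P|P|^{-2\sigma}$, which converges for $\sigma>\tfrac12$ by the Prime Polynomial Theorem~\ref{PNT}, shows that $A_k(s)$ is holomorphic and non-vanishing for $\mathfrak{R}(s)>\tfrac12$, in particular at $s=1$ with $A_k(1)\neq0$. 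Because $\z(s)=(1-q^{1-s})^{-1}$ has, inside the strip $B$, only the single simple pole at $s=1$ (the others sit at $s=1+2\pi\i\nu/\ln q$, excluded by the choice of $B$), it follows that $\zeta_{d_k}(s)$ is holomorphic on $\{\mathfrak{R}(s)=1\}\cap B$ apart from a pole of order exactly $r:=k(k+1)/2$ at $s=1$.

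With this established I would apply Lemma~\ref{Rosentheorem17.4} with $r=k(k+1)/2$ and $\alpha=\rho_k=\lim_{s\to 1}(s-1)^{r}\zeta_{d_k}(s)$, which yields the polynomial expansion \eqref{Dl1}, the degree $r-1$, and the error term $O(q^{\epsilon n})$. Using $\lim_{s\to1}(s-1)\z(s)=1/\ln q$ gives $\rho_k=(\ln q)^{-r}A_k(1)$, so that Rosen's leading coefficient $\frac{(\ln q)^{r}}{(r-1)!}\rho_k\,n^{r-1}$ collapses to $\frac{A_k(1)}{(r-1)!}\,n^{r-1}$, which is \eqref{leading}. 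Finally, for $k=2$ one has $Q_2(u)=1+u$, so $A_2(s)=\prod_P(1+|P|^{-s})(1-|P|^{-s})=\z(2s)^{-1}$ and hence $\zeta_{d_2}(s)=\z(s)^{3}/\z(2s)=(1-qu^{2})/(1-qu)^{3}$ with $u=q^{-s}$; extracting the coefficient of $u^{n}$ from this rational function via $(1-qu)^{-3}=\sum_n\binom{n+2}{2}q^nu^n$ is an exact computation, with no error term since the only singularity in the $u$-plane is the pole at $u=1/q$, and it reproduces \eqref{Dl2}.
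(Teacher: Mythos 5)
Your proposal is correct and follows essentially the same route as the paper: the Euler product for $\zeta_{d_k}(s)$, the factorisation $\zeta_{d_k}(s)=\z(s)^{k(k+1)/2}A_k(s)$ with the same $A_k(s)$, an application of Lemma~\ref{Rosentheorem17.4}, and the identification $\rho_k=(\ln q)^{-k(k+1)/2}A_k(1)$ for the leading term. The only (welcome) deviations are that you verify the analytic continuation of $A_k(s)$ to $\mathfrak{R}(s)>\tfrac12$ explicitly via the cancellation $Q_k(u)(1-u)^{k(k-1)/2}=1+O(u^2)$, and you derive the $k=2$ identity \eqref{Dl2} exactly from $\zeta_{d_2}(s)=\z(s)^3/\z(2s)=(1-qu^2)/(1-qu)^3$ instead of citing Lemma~5.1 of \cite{a&S&H}.
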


\begin{proof}	
	Let 	

	\begin{equation*}
	\zeta_f(s)= \sum_{f \text{ monic}} \frac{d_k(f^2)}{|f|^s}
	\end{equation*}
	be the zeta function associated to $d_k(f^2).$ Recall that 

	\begin{equation}\label{dkp}
	\begin{split}
	d_k(P^{r}) 
	=& \frac{(k+r-1)!}{(k-1)! r!},
	\end{split}
	\end{equation}
	Then the zeta function can be written as	

	\begin{equation*}
	\begin{split}
	\zeta_f(s) 
	& =  \prod_{\substack{P \text{ monic} \\ \text{irreducible}}} \left(1+ \sum_{n=1}^\infty \frac{1}{|P|^{sn}} \frac{(k+2n-1)!}{(k-1)! (2n)!}\right)\\
	& =  \prod_{\substack{P \text{ monic} \\ \text{irreducible}}}  \frac{1}{2} \left(1-|P|^{-s}\right)^{-k} \left(\left(1-|P|^{-\frac{s}{2}}\right)^k+\left(|P|^{-\frac{s}{2}}+1\right)^k\right) \\
	& = \left(\z(s)\right)^{\frac{k(k+1)}{2}} \prod_{\substack{P \text{ monic} \\ \text{irreducible}}}   \left( 1-|P|^{-s}\right)^{\frac{k(k-1)}{2}} \sum_{i=0}^{\left[k/2\right]} \binom{k}{2i} |P|^{-is} \\
	& =  \left(\z(s)\right)^{\frac{k(k+1)}{2}} A_k(s).
	\end{split}
	\end{equation*}
	From the definitions of $\z(s)$ and $A_k(s)$ the sum converges absolutely for $\mathfrak{R}(s)>1$, is holomorphic on the disc $\left\{u=q^{-s}\in\mathbb{C}:|u|\le q^{-\delta}\right\}$ for some $\delta<1$, and $\zeta_f(s)$ has a pole of order $k(k+1)/2$ at $s=1$. Applying Lemma \ref{Rosentheorem17.4} equation (\ref{Dl1}) follows. Since we have 	
	
	\begin{equation*}
	\begin{split}
	\rho_k & = \lim_{s\to 1}(s-1)^{k(k+1)/2} \left(\z(s)\right)^{\frac{k(k+1)}{2}} A_k(s)\\
	&= \frac{1}{\left(\ln q\right)^{k(k+1)/2}} A_k(1),
	\end{split}
	\end{equation*}
	then by applying the formula for the leading term of the polynomial in parenthesis given in the statement of Lemma \ref{Rosentheorem17.4}, we get equation (\ref{leading}). For (\ref{Dl2}) see Lemma 5.1 in \cite{a&S&H}.
\end{proof}

The next result is a bound for non-trivial character sums.

\begin{proposition}\label{chi}
	If $f\in \f$ is monic and not a perfect square, with $\deg(f)>0$ then we have that 	

	\begin{equation*}
	\Bigg|\sum_{\substack{P \text{ irreducible}\\ \deg(P)=n}}\left(\frac{f}{P}\right)\Bigg|\ll\frac{q^{\frac{n}{2}}}{n} \deg(f).
	\end{equation*}	
\end{proposition}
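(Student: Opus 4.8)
The plan is to replace the symbol $\leg{f}{P}$, which depends awkwardly on the modulus $P$, by a fixed Dirichlet character evaluated at $P$, and then to read off the character sum from the coefficients of the logarithmic derivative of the associated $L$-function. First I would invoke the reciprocity law for $\f$: since $q\equiv 1\pmod 4$, the sign $(-1)^{\frac{q-1}{2}\deg(f)\deg(P)}$ is trivial, so for monic coprime $f,P$ one has $\leg{f}{P}=\leg{P}{f}$, while both symbols vanish when $P\mid f$. Writing $\x_f(\cdot):=\leg{\cdot}{f}$, the sum in question therefore equals $\sum_{\deg(P)=n}\x_f(P)$. Because $f$ is monic of positive degree and is not a perfect square, $\x_f$ is a nontrivial character modulo $f$, and its $L$-function
\[
\lL(u)=\sum_{g\text{ monic}}\x_f(g)\,u^{\deg g}=\prod_{P}\left(1-\x_f(P)u^{\deg P}\right)^{-1},\qquad u=q^{-s},
\]
is a polynomial in $u$ of degree $D\le\deg(f)-1$.

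The next step is to factor $\lL(u)=\prod_{j=1}^{D}(1-\alpha_j u)$ and compute $u\,\lL'(u)/\lL(u)$ in two ways — via the Euler product and via the factorisation. Comparing coefficients of $u^n$ gives the explicit formula
\[
\sum_{d\mid n}d\sum_{\deg(P)=d}\x_f(P)^{n/d}=-\sum_{j=1}^{D}\alpha_j^{\,n},
\]
and isolating the diagonal term $d=n$ yields
\[
n\sum_{\deg(P)=n}\x_f(P)=-\sum_{j=1}^{D}\alpha_j^{\,n}-\sum_{\substack{d\mid n\\ d<n}}d\sum_{\deg(P)=d}\x_f(P)^{n/d}.
\]

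I would then bound the two terms on the right. The essential input is the Riemann Hypothesis for curves over finite fields (Weil), which forces $|\alpha_j|\le q^{1/2}$ for every $j$; together with $D\le\deg(f)$ this gives $\bigl|\sum_{j}\alpha_j^{\,n}\bigr|\le\deg(f)\,q^{n/2}$. For the prime-power contribution, a proper divisor $d\mid n$ satisfies $d\le n/2$, and the Prime Polynomial Theorem (Theorem \ref{PNT}) bounds $\sum_{\deg(P)=d}|\x_f(P)^{n/d}|\le\pi_A(d)\ll q^{d}/d$, so $\sum_{d\mid n,\,d<n}d\,\pi_A(d)\ll\sum_{d\le n/2}q^{d}\ll q^{n/2}$. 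Adding the two estimates bounds the right-hand side by $\ll\deg(f)\,q^{n/2}$, and dividing through by $n$ gives the claim.

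The main obstacle is packaged entirely into Weil's theorem: the truth of the estimate rests on the square-root cancellation $|\alpha_j|=q^{1/2}$ among the inverse zeros of $\lL(u)$, without which one would only recover the trivial bound. The remaining points need only care rather than effort: (i) confirming that in the regime $q\equiv 1\pmod 4$ the reciprocity sign is genuinely trivial, so no stray factor survives the substitution $\leg{f}{P}=\leg{P}{f}$; and (ii) checking that the degree bound $D\le\deg(f)-1$ persists when $\x_f$ is imprimitive, in which case $\lL(u)$ acquires finitely many extra inverse zeros of absolute value $1$ coming from the Euler factors at primes dividing the conductor-to-modulus quotient — these are $\le q^{1/2}$ and only improve the estimate. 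The prime-power clean-up via Theorem \ref{PNT} is routine.
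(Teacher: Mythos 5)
Your proof is correct and follows essentially the same route as the paper's source: the paper gives no proof of its own but cites Rudnick, whose argument is precisely your reduction via reciprocity to a nontrivial character $\chi_f$ modulo $f$, the explicit formula obtained by comparing the Euler product and the factorisation of the $L$-polynomial of degree at most $\deg(f)-1$, Weil's Riemann Hypothesis bound $|\alpha_j|\le q^{1/2}$, and the trivial estimate $\sum_{d\le n/2}q^{d}\ll q^{n/2}$ for the proper prime-power terms. Your two flagged checkpoints (the reciprocity sign, which is constant over $\deg(P)=n$ and so harmless even without $q\equiv 1\pmod 4$, and the imprimitive case, where the extra inverse zeros have absolute value $1$) are exactly the right points of care and are handled correctly.
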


For the proposition above see page 87 in \cite{Rudnick}. The next result follows from Proposition \ref{chi} and Lemma \ref{dl}.

%
%

\begin{lemma}\label{NkBox}\label{NBox}
	Let $f$ be a monic polynomial in $\f$ of degree $n,$ then if $f$ is not a perfect-square we have
	
	\begin{equation*}
	\sum_{P\in\p}\, \sum_{\substack{\deg(f)=n\\ f\neq\Box}}\chi_p(f)d_k(f)\ll  \frac{\sqrt{|P|}}{\log|P|} q^n n^k.
	\end{equation*}
\end{lemma}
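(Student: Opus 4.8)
The plan is to interchange the order of summation so that the sum over $P\in\p$ becomes the inner sum, and then to apply the nontrivial character-sum bound of Proposition \ref{chi} fibrewise in $f$. Writing $\x_P(f)=\leg{f}{P}$ and using that $d_k(f)\ge 0$, the triangle inequality gives
\begin{equation*}
\left|\sum_{P\in\p}\sum_{\substack{\deg(f)=n\\ f\neq\Box}}\x_P(f)d_k(f)\right|
\le \sum_{\substack{\deg(f)=n\\ f\neq\Box}} d_k(f)\left|\sum_{P\in\p}\leg{f}{P}\right|.
\end{equation*}
Thus the estimate is reduced to controlling the inner prime sum for each fixed non-square $f$, and then summing the $k$-fold divisor function over $f$.

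Next I would estimate the inner sum over primes. For each $f$ occurring in the outer sum we have $f\neq\Box$ and $\deg(f)=n>0$, so Proposition \ref{chi} applies directly; here the role of the summation degree is played by $2g+1$ (the common degree of the $P\in\p$) while the fixed polynomial has degree $n$, so the proposition yields
\begin{equation*}
\left|\sum_{P\in\p}\leg{f}{P}\right|\ll \frac{q^{(2g+1)/2}}{2g+1}\,\deg(f)=\frac{q^{(2g+1)/2}}{2g+1}\,n.
\end{equation*}
This is precisely the step where the hypothesis $f\neq\Box$ is indispensable: were $f$ a perfect square the character $\x_P$ would be principal and the sum over $P$ would have size comparable to $\pi_A(2g+1)$ rather than square-root size, so the non-square restriction cannot be dropped. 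I expect this to be the only point demanding genuine care; the rest is bookkeeping.

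Inserting this bound and pulling the $f$-independent factor $q^{(2g+1)/2}/(2g+1)$ outside, the problem collapses to estimating $\sum_{\deg(f)=n}d_k(f)$, which I would bound from above by extending the sum to all monic $f$ of degree $n$ and invoking Lemma \ref{dl}:
\begin{equation*}
\sum_{\substack{\deg(f)=n\\ f\neq\Box}} d_k(f)\le \sum_{\substack{f \text{ monic}\\ \deg(f)=n}} d_k(f)\ll q^n n^{k-1}.
\end{equation*}
Combining the two displays produces an overall bound of size $\ll \dfrac{q^{(2g+1)/2}}{2g+1}\,q^n n^{k}$. Since $|P|=q^{2g+1}$ for every $P\in\p$, one has $\sqrt{|P|}=q^{(2g+1)/2}$ and $\log|P|=2g+1$, so this is exactly $\dfrac{\sqrt{|P|}}{\log|P|}\,q^n n^k$, which is the asserted bound.
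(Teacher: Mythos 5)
Your proof is correct and is precisely the argument the paper intends: the paper gives no written proof, stating only that the lemma ``follows from Proposition \ref{chi} and Lemma \ref{dl}'', and your proposal executes exactly that route --- swap the sums, bound the inner prime sum by $\frac{q^{(2g+1)/2}}{2g+1}\deg(f)$ via Proposition \ref{chi} (where the non-square hypothesis is indeed essential), and bound $\sum_{\deg(f)=n}d_k(f)\ll q^n n^{k-1}$ via Lemma \ref{dl}, which combine to $\frac{\sqrt{|P|}}{\log|P|}\,q^n n^k$ since $|P|=q^{2g+1}$ and $\log$ is the base-$q$ logarithm. No gaps; your identification of the non-square restriction as the one point of genuine care is also apt.
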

%
With the previous results in hands we can establish the following result.

\begin{lemma}\label{nkbox}
	Let $f$ be a monic polynomial in $\f$. If $f$ is not a perfect-square we have
	
	\begin{equation*}\label{nkbox1}
	\text{(1) } \log|P| \sum_{P\in\p}  \sum_{\substack{f \text{ monic}\\ \deg(f)\le kg \\ f\neq\Box}} \frac{d_k(f)}{|f|}\x_P(f)  = O\left(|P|^{\frac{1}{2}} \left(\log|P|\right)^{k+1}\right),
	\end{equation*}
	and
	
		\begin{equation*}\label{nkbox2}
	\text{(2) } q^{-kg} \log|P|\sum_{P\in\p} \sum_{\substack{f \text{ monic}\\ \deg(f)\le kg-1 \\ f\neq\Box}} \x_P(f) d_k(f) = O\left(|P|^{\frac{1}{2}} \left(\log|P|\right)^{k}\right).
	\end{equation*}
\end{lemma}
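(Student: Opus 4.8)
The plan is to reduce both statements to a degree-by-degree application of Lemma~\ref{NkBox}, which already packages the cancellation in the character sum $\sum_{P\in\p}\sum_{\deg f=n,\,f\neq\Box}\x_P(f)d_k(f)$ together with its explicit dependence $\ll \frac{\sqrt{|P|}}{\log|P|}q^n n^k$ on the degree $n$. The only structural inputs I need beyond that lemma are the interchange of the finite sums over $P$, over $f$, and over $n=\deg f$, and the elementary relation $|P|=q^{2g+1}$, so that $\log|P|=2g+1\asymp g$ and every power of $g$ can be rewritten as the corresponding power of $\log|P|$.

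For part (1) I would first split the $f$-sum according to $n=\deg f$ and write $\tfrac{1}{|f|}=q^{-n}$, giving
$$\log|P|\sum_{P\in\p}\sum_{\substack{\deg f\le kg\\ f\neq\Box}}\frac{d_k(f)}{|f|}\x_P(f)=\log|P|\sum_{n=0}^{kg}q^{-n}\sum_{P\in\p}\sum_{\substack{\deg f=n\\ f\neq\Box}}\x_P(f)d_k(f).$$
Inserting the bound from Lemma~\ref{NkBox} for each $n$, the factor $q^{-n}$ cancels the $q^n$ and $\log|P|$ cancels the $1/\log|P|$, leaving $\sqrt{|P|}\sum_{n=0}^{kg}n^k$. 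Since this is a sum of $O(g)$ terms each of size at most $(kg)^k$, it is $\ll g^{k+1}\asymp(\log|P|)^{k+1}$, which yields the claimed bound $O\!\left(|P|^{1/2}(\log|P|)^{k+1}\right)$.

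Part (2) is handled the same way, but the weighting is different: there is no $q^{-n}$ inside the sum, only the global prefactor $q^{-kg}$. After decomposing by degree and applying Lemma~\ref{NkBox} I obtain $q^{-kg}\sqrt{|P|}\sum_{n=0}^{kg-1}q^n n^k$. Here the degree-sum is essentially geometric, so it is dominated up to a constant by its top term $\asymp q^{kg}g^k$; the prefactor $q^{-kg}$ then cancels the geometric growth and leaves $\sqrt{|P|}\,g^k\asymp\sqrt{|P|}(\log|P|)^k$, as required.

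The computation is short, and the one point deserving care — indeed the crux of why the two bounds differ by exactly one power of $\log|P|$ — is precisely this contrast between the two degree-sums. With the $q^{-n}$ weighting of (1) the residual sum $\sum_{n\le kg}n^k$ is polynomial and accumulates an extra $\log|P|$ factor from running over all $\asymp\log|P|$ admissible degrees, whereas in (2) the geometric sum concentrates on the largest degree and produces no such factor. I would therefore make sure the implied constant in Lemma~\ref{NkBox} is genuinely uniform in $n$ (it is, since the $n^k$ there is explicit), so that term-by-term summation over $n$ is legitimate.
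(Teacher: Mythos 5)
Your proposal is correct and follows what is essentially the paper's own route: the paper states Lemma~\ref{nkbox} without a written-out proof, presenting it as an immediate consequence of Lemma~\ref{NkBox}, and your degree-by-degree summation—with the $q^{-n}$ weighting producing $\sqrt{|P|}\sum_{n\le kg}n^k\ll\sqrt{|P|}\,(\log|P|)^{k+1}$ in (1), versus the geometric sum $q^{-kg}\sqrt{|P|}\sum_{n\le kg-1}q^n n^k\ll\sqrt{|P|}\,(\log|P|)^{k}$ dominated by its top term in (2)—is exactly the computation being invoked. Your remark that the implied constant in Lemma~\ref{NkBox} is uniform in $n$ (depending only on the fixed $k$) correctly justifies the term-by-term summation, and your accounting of where the single extra factor of $\log|P|$ in (1) comes from is accurate.
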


\begin{lemma}[Merten's Theorem \cite{Ros1}]\label{Mertens}
	Let $P\in\f$ be monic irreducible polynomial. Then, we have	
	\begin{equation*}
	\prod_{\substack{P \text{ irreducible} \\ \deg(P)\leqslant X}} \left(1-\frac{1}{|P|}\right)^{-1} = e^\gamma X+O(1),
	\end{equation*}
	where $\gamma$ is the Euler constant.
\end{lemma}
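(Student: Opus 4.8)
The plan is to pass to logarithms and reduce the claim to the asymptotic evaluation of a single sum over prime powers. Writing $L(X) = \sum_{\deg(P)\le X}\bigl(-\ln(1-|P|^{-1})\bigr)$ for the logarithm of the product in question and expanding each term as $-\ln(1-|P|^{-1}) = \sum_{m\ge 1}\frac{1}{m|P|^{m}}$, it suffices to show that $L(X) = \ln X + \gamma + O(1/X)$; exponentiating then yields $e^{\gamma}X\cdot(1+O(1/X)) = e^{\gamma}X + O(1)$, which is the assertion.

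First I would regroup the double sum by the total degree $N = m\deg(P)$. Since $|P|^{m} = q^{m\deg(P)} = q^{N}$ and the weight $\tfrac1m = \tfrac{\deg(P)}{N}$, collecting all pairs $(P,m)$ with $m\deg(P)=N$ gives
\[
L(X) = \sum_{N\ge 1}\frac{q^{-N}}{N}\sum_{\substack{d\mid N\\ d\le X}} d\,\pi_A(d),
\]
where $\pi_A(d)$ is the number of monic irreducibles of degree $d$. The engine of the proof is the exact counting identity $\sum_{d\mid N} d\,\pi_A(d) = q^{N}$ underlying the Prime Polynomial Theorem (Theorem~\ref{PNT}). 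For every $N\le X$ the inner divisor sum is unrestricted, hence equals $q^{N}$ exactly, and the corresponding block of $L(X)$ collapses to the harmonic sum $\sum_{N\le X} 1/N = \ln X + \gamma + O(1/X)$. This is precisely the mechanism that forces the constant to be exactly $e^{\gamma}$, with no spurious Euler factor.

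It then remains to show that the tail $N > X$ contributes only $O(1/X)$ (in fact far less). Here the restriction $d\le X$ removes the dominant divisor $d=N$ from $\sum_{d\mid N}d\,\pi_A(d)=q^N$, and the explicit formula $N\pi_A(N) = \sum_{e\mid N}\mu(e)q^{N/e} = q^{N} + O(q^{N/2})$ shows that what survives is $\sum_{d\mid N,\,d\le X} d\,\pi_A(d) = O\bigl(q^{N/2}\,\tau(N)\bigr)$, since every proper divisor $d\mid N$ satisfies $d\le N/2$. Summing $q^{-N}N^{-1}\cdot O(q^{N/2}\tau(N))$ over $N>X$ yields a bound of size $O\bigl(q^{-X/2}\,\mathrm{poly}(X)\bigr)=o(1/X)$, so the tail is negligible. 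Combining the two ranges gives $L(X) = \ln X + \gamma + O(1/X)$, and exponentiating completes the proof.

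The step I expect to be the main obstacle is pinning down the constant exactly as $e^{\gamma}$: a naive treatment that evaluates only the $m=1$ term via the Prime Polynomial Theorem and discards the higher prime powers leaves behind two separate convergent constants whose individual values are opaque, and it is not at all obvious from that vantage point that they cancel. The regrouping by total degree $N$ together with the identity $\sum_{d\mid N}d\,\pi_A(d)=q^{N}$ is exactly what makes this cancellation transparent and simultaneously supplies the exponentially small control on the tail; identifying and exploiting this identity is the crux. An alternative, and perhaps cleaner, route would be to work with the generating function $\z(s) = (1-q^{1-s})^{-1}$ directly and extract the partial Euler product from its analytic behaviour near $s=1$, but the degree-regrouping argument above is the most self-contained given the tools already assembled in this section.
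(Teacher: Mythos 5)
Your argument is correct, and in fact it proves something slightly sharper than the statement, namely $e^\gamma X\left(1+O(1/X)\right)$. Note, however, that the paper does not prove this lemma at all: it is quoted from Rosen \cite{Ros1}, whose proof is cast for rather general function fields and proceeds through the analytic behaviour of the zeta function near $s=1$ --- essentially the ``alternative route'' you mention at the end. Your degree-regrouping argument is the standard elementary proof in the special case $A=\f$, and it is self-contained given one ingredient you should state and justify explicitly: the identity $\sum_{d\mid N} d\,\pi_A(d)=q^N$, which follows by comparing coefficients in the logarithmic derivative of the Euler product for $\z(s)=\left(1-q^{1-s}\right)^{-1}$, or by counting monic polynomials of degree $N$ via unique factorization. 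It is this identity, rather than the asymptotic Prime Polynomial Theorem (Theorem \ref{PNT}) you cite, that your proof actually runs on, though the two are equivalent via M\"obius inversion. Two small streamlinings: positivity of the terms in that identity gives $d\,\pi_A(d)\le q^d$ at once, so in the tail range $N>X$ you can bound $\sum_{d\mid N,\, d\le X} d\,\pi_A(d)$ by the geometric sum $\sum_{d\le N/2} q^d \ll q^{N/2}$, dispensing with both the divisor-count factor and the explicit M\"obius formula; and, as you rightly emphasize, the constant $e^\gamma$ here comes for free from the harmonic-sum asymptotic $\sum_{N\le X} 1/N = \ln X + \gamma + O(1/X)$, in pleasant contrast to the classical Mertens theorem over $\mathbb{Z}$, where pinning down the constant is the hard part.
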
 

\subsection{The Random Euler Product}$\color{white}bnkjnd$\label{random product}\\
We present in this section the probabilistic model that we will use when studying $L(s,\chi_{P})$. Let $\left\{W_P \mid P \text{ prime}\right\}$ be a sequence on independent random variables on a probability space $\left(\Omega,\mathcal{F},\pp\right)$ such that for each $\w\in\Omega$ with $W_P(\w)=\pm 1$ has probability $1/2$, note that by Theorem 5.3 in \cite{Billingsley} such probability exists.
\newline

We define the random Euler product $L\left(1,\w\right),\w\in\Omega,$ by

\begin{equation}\label{lomega}
\begin{split}
L(1,\w):&= \sum_{\w \in \Omega} \frac{W_P(\w)}{|\w|}\\
& = \prod_{\substack{P \text{ monic} \\ \text{irreducible}}} \left(1- \frac{W_P(\w)}{|P|}\right)^{-1}\\
&= \prod_{\substack{P \text{ monic} \\ \text{irreducible}}} L_P(1,\omega),
\end{split}
\end{equation}
which converges almost surely (see \cite{c&s}, \cite{lau} and \cite{Lum}).  Let $E[Y]$ be the expected value of a random variable $Y$ on $\Omega$ that is defined by 

\begin{equation*}
E\left[Y\right]= \sum_{i=1}^k x_i \mathbf{P}(x_i),
\end{equation*}
where $Y(\omega_i)=x_i, \omega_i\in\Omega, i=1,\cdots,k$ and $\mathbf{P}$ is the probability distribution of the random variable. Since for each prime $P$ $E\left[W_P/|P|\right] = 0,$ we have

\begin{equation*}
\begin{split}
\sum_{\substack{P \text{ monic}\\ \text{irreducible}}} E\left[\left|\frac{W_P}{|P|}\right|^2\right] 
&= \sum_{\substack{P \text{ monic}\\ \text{irreducible}}} \frac{1}{|P|^2} 
<\infty,
\end{split}
\end{equation*}
and 

\begin{equation*}
L_P(1,\w) = 1+\sum_{n=1}^\infty \frac{W_P(\w)^n}{|P|^n},
\end{equation*}
which converges for almost all $\w\in\Omega,$ (see Theorem 1.7 in \cite{lau}). Moreover, $L(1,\w)>0$ for almost all $\w\in\Omega$.  

\begin{lemma}\label{E=EP}
	Let $k>0$, then the infinite product $\prod_{P} E\left[L_P(1,.)^k\right]$ is convergent, the random variable $L(1,\w)^k, \w\in\Omega$ is integrable, and we have 	

	\begin{equation*}
	\prod_{\substack{P \text{ monic} \\ \text{irreducible}}} E\left[L_P(1,.)^k\right]=E\left[L(1,.)^k\right]
	\end{equation*}
\end{lemma}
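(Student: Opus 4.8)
The plan is to compute each local factor explicitly, deduce convergence of the product from a second-moment estimate, and then identify the product with the expectation by a uniform-integrability argument applied to the partial products. First I would evaluate the local factors: since $W_P(\w)=\pm1$ with probability $1/2$ each and $L_P(1,\w)=(1-W_P(\w)/|P|)^{-1}$, for every $k>0$
\[
E\left[L_P(1,\cdot)^k\right]=\frac12\left(1-\frac{1}{|P|}\right)^{-k}+\frac12\left(1+\frac{1}{|P|}\right)^{-k}.
\]
Two features of this quantity drive the argument. By convexity of $t\mapsto t^{-k}$ on $(0,\infty)$ and Jensen's inequality (the two arguments $1\mp|P|^{-1}$ average to $1$), each factor is $\ge 1$; and expanding $(1\mp x)^{-k}$ in a binomial series with $x=1/|P|$, the odd powers cancel, leaving
\[
E\left[L_P(1,\cdot)^k\right]=1+\frac{k(k+1)}{2}\frac{1}{|P|^{2}}+O_k\!\left(\frac{1}{|P|^{4}}\right)=1+O_k\!\left(\frac{1}{|P|^{2}}\right).
\]
Hence $\log E[L_P(1,\cdot)^k]=O_k(|P|^{-2})$, and since the Prime Polynomial Theorem (Theorem \ref{PNT}) gives $\sum_{P}|P|^{-2}=\sum_{n\ge1}\pi_A(n)q^{-2n}\ll\sum_{n\ge1}(nq^{n})^{-1}<\infty$, the product $\prod_P E[L_P(1,\cdot)^k]$ converges to a finite positive limit, which settles the first assertion.

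Next I would pass to partial products. Write $L_N(1,\w)=\prod_{\deg P\le N}L_P(1,\w)$. By independence of the $W_P$,
\[
E\left[L_N(1,\cdot)^k\right]=\prod_{\deg P\le N}E\left[L_P(1,\cdot)^k\right],
\]
and since every factor is $\ge1$ this sequence is nondecreasing in $N$ and bounded above by $\prod_P E[L_P(1,\cdot)^k]$, so it converges to that product. Because $L(1,\w)=\prod_P L_P(1,\w)$ converges almost surely, we also have $L_N(1,\w)^k\to L(1,\w)^k$ almost surely. Fatou's lemma then yields
\[
E\left[L(1,\cdot)^k\right]\le\liminf_{N\to\infty}E\left[L_N(1,\cdot)^k\right]=\prod_{P}E\left[L_P(1,\cdot)^k\right]<\infty,
\]
which already establishes integrability of $L(1,\cdot)^k$ together with one of the two inequalities.

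The main obstacle is the reverse inequality, i.e.\ justifying the exchange of limit and expectation to obtain equality, since $L_N(1,\w)^k$ is \emph{not} monotone in $N$ for fixed $\w$ (a factor with $W_P=-1$ is $<1$), so monotone convergence does not apply directly. The plan is to establish uniform integrability of $\{L_N(1,\cdot)^k\}_N$ by controlling a higher moment. Applying the computation of the first paragraph with the exponent $k(1+\delta)$ for a fixed small $\delta>0$ shows that $\prod_P E[L_P(1,\cdot)^{k(1+\delta)}]$ also converges, whence
\[
\sup_{N}E\left[L_N(1,\cdot)^{k(1+\delta)}\right]=\sup_N\prod_{\deg P\le N}E\left[L_P(1,\cdot)^{k(1+\delta)}\right]\le\prod_P E\left[L_P(1,\cdot)^{k(1+\delta)}\right]<\infty.
\]
A uniform bound on the $(1+\delta)$-th moments of the family forces uniform integrability of $\{L_N(1,\cdot)^k\}_N$ (de la Vall\'ee--Poussin). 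Combined with the almost sure convergence $L_N(1,\w)^k\to L(1,\w)^k$, this upgrades the convergence to convergence in $L^1$, so that $E[L_N(1,\cdot)^k]\to E[L(1,\cdot)^k]$. Comparing with the limit computed in the second paragraph gives $E[L(1,\cdot)^k]=\prod_P E[L_P(1,\cdot)^k]$, completing the proof.
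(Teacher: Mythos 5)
Your proposal is correct and follows essentially the same route as the paper: compute the local factors (your closed-form binomial expansion is equivalent to the paper's series $\sum_n d_k(P^{2n})|P|^{-2n}$), deduce convergence of the product, use independence to identify partial products with expectations, and then combine almost sure convergence with uniform integrability of the partial products obtained from a higher-moment bound. The only cosmetic difference is that you control the $k(1+\delta)$-th moment and invoke de la Vall\'ee--Poussin, while the paper bounds $E\left[|Y_n|^2\right]$ (i.e.\ takes $\delta=1$, exponent $2k$) and cites Shiryaev's uniform-integrability lemma.
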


\begin{proof}
	For each prime $P$, 	
	$E\left[W_P^m\right]=0$ if $m$ is odd and  $E\left[W_P^m\right]=1$ if $m$ is even. Therefore, its follows from Lebesgue dominated convergence theorem and the formula of $d_k(n)$ in \cite{Tit}  that 	

	\begin{equation}\label{e=dk}
	\begin{split}
	E\left[L_P(1,.)^k\right] & = E\left[\left(\sum_{n=0}^\infty  \frac{W_P^n}{|P|^n}\right)^k\right]\\
	& = E\left[\sum_{m=0}^\infty \frac{W_P^m}{|P|^m} \sum_{m = n_1+n_2+\cdots+n_k}1\right]\\
	& = E\left[\sum_{m=0}^\infty \frac{W_P^m}{|P|^m} \frac{(k+m-1)!}{m!(k-1)!}\right]\\
	& = \sum_{m=0}^\infty \frac{(k+m-1)!}{m!(k-1)!|P|^m} E\left[W_P^m\right]\\
	& = \sum_{n=0}^\infty \frac{d_k(P^{2n})}{|P|^{2n}}.
	\end{split}	
	\end{equation}
	From this and the fact that $d_k(P)\ll_{k,\varepsilon}|P|^\varepsilon$  we establish 	
	
	\begin{equation*}
	\begin{split}
	E\left[L_P(1,.)^k\right] 
	& = 1 + O_k\left(|P|^{2 \varepsilon-2}\right).
	\end{split}
	\end{equation*}
	Since $\sum_P |P|^{2 \epsilon-2}<\infty,$  the infinite product $\prod_P E\left[L_P(1,.)^k \right]$ is convergent.\\
	Now, for $n\ge 2$ put $
	Y_n(\omega):= \prod_{\substack{\deg(P)\le n}} L_P(1,\w)^k,$  and $Y(\w)=L(1,\w)^k.$ 
	Since $W_P$'s are independent random variables, then 
	
	\begin{equation}\label{Lpn}
	\begin{split}
	\prod_{\substack{P \text{ irreducible} \\ \deg(P)\le n}} E\left[L_P(1,.)^k\right] 
	& = E\left[Y_n\right].
	\end{split}
	\end{equation}
	Moreover, using (\ref{e=dk}), (\ref{dkp}) and the independence of $W_P$'s, we have
	
	\begin{equation*}
	\begin{split}
	E\left[|Y_n|^2\right] 
	&= \prod_{\substack{P \text{ irreducible} \\ \deg(P)\le n}} E\left[ L_P(1,.)^{2k} \right]\\
	&= \prod_{\substack{P \text{ irreducible} \\ \deg(P)\le n}} E\left[  1 +  \sum_{n=1}^\infty \frac{d_{2k}(P^{2n})}{|P|^{2n}} \right]\\
	&= \prod_{\substack{P \text{ irreducible} \\ \deg(P)\le n}} \left(1+ O_k\left(|P|^{2\varepsilon-2}\right)\right) <C_k,
	\end{split}
	\end{equation*}
	where $C_k>0$ is constant depend on $k$. Making use of Lemma 3 from \cite{Shiryaev}, we get that the sequence $\{Y_n\}$ is uniformly integrable. Recall that $	\prod_{\substack{ \deg(P)\le n}}L_P(1,\w)\to L(1,\w)$ as $n\to\infty$ for almost all $\w\in\Omega.$ Therefore, $Y_n(\omega)\to Y(\omega)$ as $n\to\infty$ for almost all $\w\in\Omega.$ Since $\{Y_N\}$ is uniformly integrable and by Theorem 4(b) from \cite{Shiryaev}, we have that $Y$ ia also integrable and $E\left[Y_N\right]\to E\left[Y\right]$ as $n\to\infty$. Combining this with (\ref{Lpn}) we complete the proof.
\end{proof}

\begin{lemma}\label{E=dk}
	For $k>0$, we have that
	
	\begin{equation*}
	E\left[L(1,.)^k\right]= \sum_{f \text{ monic}} \frac{d_k(f^2)}{|f|^2}.
	\end{equation*}
\end{lemma}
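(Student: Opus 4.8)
The plan is to combine the two ingredients already assembled in Lemma \ref{E=EP}. First I would invoke that lemma to write the global expectation as the convergent Euler product
$$E\left[L(1,\cdot)^k\right]=\prod_{\substack{P \text{ monic} \\ \text{irreducible}}} E\left[L_P(1,\cdot)^k\right].$$
Next I would substitute the local evaluation already carried out in equation (\ref{e=dk}), namely $E\left[L_P(1,\cdot)^k\right]=\sum_{n=0}^\infty d_k(P^{2n})/|P|^{2n}$, so that the right-hand side becomes a product of local factors whose expansion we can control.

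The heart of the argument is then a routine Euler product expansion. Every monic polynomial $f$ admits a unique factorization $f=\prod_P P^{a_P}$ with $a_P\ge 0$ and only finitely many $a_P$ nonzero; hence $f^2=\prod_P P^{2a_P}$ and $|f|^2=\prod_P |P|^{2a_P}$. Since $d_k$ is multiplicative, $d_k(f^2)=\prod_P d_k(P^{2a_P})$, and therefore
$$\frac{d_k(f^2)}{|f|^2}=\prod_P \frac{d_k(P^{2a_P})}{|P|^{2a_P}}.$$
Summing over all monic $f$ amounts to summing over all admissible exponent tuples $(a_P)_P$, which is precisely the formal expansion of $\prod_P\bigl(\sum_{n=0}^\infty d_k(P^{2n})/|P|^{2n}\bigr)$. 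Matching the two expressions yields
$$\prod_{\substack{P \text{ monic} \\ \text{irreducible}}} E\left[L_P(1,\cdot)^k\right]=\sum_{f \text{ monic}} \frac{d_k(f^2)}{|f|^2},$$
which is the assertion.

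The only point requiring care, and the main obstacle, is justifying the interchange of the infinite product with the infinite sums. Because all the terms $d_k(P^{2n})/|P|^{2n}$ are nonnegative, this rearrangement is legitimate by a Tonelli-type argument for nonnegative series, and the absolute convergence of the product was already verified in the proof of Lemma \ref{E=EP} using the bound $d_k(P^{2n})\ll_{k,\varepsilon}|P|^{2n\varepsilon}$ together with $\sum_P|P|^{2\varepsilon-2}<\infty$. I would also remark, to ensure no convergence issue is swept under the rug, that the convergence of the target series $\sum_{f}d_k(f^2)/|f|^2$ is itself guaranteed by Lemma \ref{Dlk}, which identifies the associated Dirichlet series $\sum_{f}d_k(f^2)/|f|^s$ as holomorphic for $\mathfrak{R}(s)>1$ with a pole of order $k(k+1)/2$ at $s=1$, so that evaluation at $s=2$ is unproblematic.
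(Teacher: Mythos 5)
Your proposal is correct and follows essentially the same route as the paper: invoking Lemma \ref{E=EP} together with the local evaluation (\ref{e=dk}), then expanding $\sum_{f}d_k(f^2)/|f|^2$ as an Euler product via the multiplicativity of $d_k$. The extra care you take over the interchange of product and sums (nonnegativity, absolute convergence) is a welcome tightening of what the paper leaves implicit, but it is not a different argument.
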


\begin{proof}
	Recall that $d_k(n)$ is a multiplicative function (see p.5 in \cite{Tit}), and from Lemma \ref{E=EP}, and (\ref{e=dk}) we can write 

	\begin{equation*}
	\begin{split}
	\sum_{f \text{ monic}} \frac{d_k(f^2)}{|f|^2} 
	&= \prod_{\substack{P \text{ monic} \\ \text{irreducible}}} \sum_{n=0}^\infty \frac{d_k(P^{2n})}{|P|^{2n}}\\
	&= E\left[L(1,.)^k\right].
	\end{split}
	\end{equation*}
\end{proof}

\begin{lemma} Let $f\in A$ be monic polynomial, we have that

	\begin{equation*}
	E\left[W_f\right]= \begin{cases}
	0 & \text{ if } f \text{ is not a square}\\
	1  & \text{ if } f \text{ is a square}
	\end{cases}
	\end{equation*}
\end{lemma}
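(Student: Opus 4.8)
The plan is to reduce the statement to the multiplicative definition of $W_f$, the independence of the family $\{W_P\}$, and the elementary moment computation for a single Rademacher variable already recorded above. First I would recall that for a monic polynomial $f$ with prime factorization $f=\prod_P P^{e_P}$, the random variable $W_f$ is defined multiplicatively by $W_f=\prod_P W_P^{e_P}$, extending the generators $W_P$ in exactly the way the quadratic symbol $\x_P$ is extended from primes to all monic polynomials. Since only finitely many exponents $e_P$ are nonzero, this is a genuine finite product of the random variables $W_P$.

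Next I would use the independence of the $W_P$'s to factor the expectation as $E[W_f]=E\big[\prod_P W_P^{e_P}\big]=\prod_P E\big[W_P^{e_P}\big]$, the interchange being legitimate precisely because the $W_P$ are independent and the product is finite. The single-variable moments were computed in the proof of Lemma \ref{E=EP}: because $W_P=\pm 1$ each with probability $1/2$, one has $E[W_P^m]=1$ when $m$ is even and $E[W_P^m]=0$ when $m$ is odd.

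Combining these two observations finishes the argument: the finite product $\prod_P E[W_P^{e_P}]$ equals $1$ exactly when every exponent $e_P$ is even, which is the condition that $f$ be a perfect square, and it vanishes as soon as at least one $e_P$ is odd, i.e. when $f$ is not a square. This yields the stated dichotomy. There is no real analytic obstacle here; the only point meriting attention is the factorization of the expectation of the product into the product of expectations, which is immediate from the independence of the $W_P$.
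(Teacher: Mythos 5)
Your proof is correct and follows the paper's own argument exactly: factor $f=P_1^{e_1}\cdots P_r^{e_r}$, use independence of the $W_P$'s to write $E[W_f]=\prod_i E\bigl[W_{P_i}^{e_i}\bigr]$, and apply the parity computation $E[W_P^m]=1$ for $m$ even, $0$ for $m$ odd. No gaps; your added remark spelling out why the interchange of expectation and finite product is legitimate is a minor elaboration the paper leaves implicit.
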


\begin{proof}
	Let $f=P_1^{e_1}\cdots P_r^{e_r}$ be the prime power factorization of $f$. By the independence of $W_P$'s we have 
	\begin{equation*}
	\begin{split}
	E\left[W_f\right] &= E\left[W_{P_1}^{e_1}\right]\cdots E\left[W_{P_r}^{e_r}\right]\\
	&= \prod_{i=1}^r E\left[W_{P_i}^{e_i}\right].
	\end{split}
	\end{equation*}
	Since $E\left[W_P^{e_i}\right]=0$ when $e_i$ is odd and $E\left[W_P^{e_i}\right]=1$  when $e_i$ is even, we obtain the Lemma.
\end{proof}


\section{Nagoshi's Theorems in Function Fields}

\subsection{Moments of $L(1,\x_P)$}\label{Proof of Nagoshis theorem} $\color{white}gkh$\\

For $k\in\N$, we define 
\begin{equation}\label{ak}
a_k := \sum_{f \text{ monic}} \frac{d_k(f^2)}{|f|^2}\in \R,
\end{equation}
which is convergent by the bound $d_k(f)\ll_{k,\varepsilon}|f|^\varepsilon,$ for any $\varepsilon>0$ (see Theorem 2 in \cite{kara+voron}). Remember that the set $\mathbb{P}_n$ is defined by

\begin{equation*}
\mathbb{P}_n=\left\{P\in \f : P \text{ monic, irreducible,} \deg(P)= n \right\}.
\end{equation*}
We now state the main results of this section which can be seen as the function field analogue of Theorem \ref{nagoshithm1}. 

\begin{theorem}\label{nagoshis theorem}\label{L(1.xp)^k}
	Let $k\in\N$, $q$ be a fixed power of an odd prime, we have that
	
	\begin{equation*}
	\sum_{P\in\p} \log|P| L(1,\x_P)^k = |P| a_k + O\left(|P|^{\frac{1}{2}} \left(\log|P|\right)^{k+1}\right),
	\end{equation*}
	where $a_k$ defined as in (\ref{ak}).
\end{theorem}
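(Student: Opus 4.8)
The plan is to substitute the approximate functional equation (\ref{aproxL(1,xp)k}) for $L(1,\x_P)^k$, multiply through by $\log|P|$, sum over $P\in\p$, and interchange the order of summation. This produces two double sums over $P$ and over monic $f$ (with $\deg f\le kg$ in the first and $\deg f\le kg-1$ in the second). In each I would separate the inner sum according to whether $f$ is a perfect square or not: the non-squares will give the error term and the squares the main term $|P|a_k$.

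For the non-square part I would simply quote Lemma \ref{nkbox}, whose two estimates are exactly the statements that the non-square contributions of the first and second sums are $O(|P|^{1/2}(\log|P|)^{k+1})$ and $O(|P|^{1/2}(\log|P|)^{k})$ respectively; together they are absorbed into the claimed error. These rest on the character-sum cancellation of Proposition \ref{chi}, repackaged through Lemma \ref{NkBox}, and this is the reason the functional equation is used to truncate at $\deg f\le kg$ rather than over the full range $\deg f\le 2kg$.

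The main term comes from the squares $f=\ell^2$, where $\x_P(\ell^2)=\left(\frac{\ell}{P}\right)^2$ equals $1$ if $P\nmid\ell$ and $0$ otherwise, so $\sum_{P\in\p}\x_P(\ell^2)=\pi_A(2g+1)-\#\{P\in\p:P\mid\ell\}$. Using the Prime Polynomial Theorem (Theorem \ref{PNT}) together with $\log|P|=2g+1$, one gets $\log|P|\,\pi_A(2g+1)=|P|+O(|P|^{1/2})$, which manufactures the leading factor $|P|$; the correction $\#\{P:P\mid\ell\}$ is nonzero only for $\deg\ell\ge 2g+1$ and is harmlessly small. Hence the square part of the first sum equals $(|P|+O(|P|^{1/2}))\sum_{\deg\ell\le kg/2}d_k(\ell^2)|\ell|^{-2}$, and since the full sum over $\ell$ is $a_k$ by (\ref{ak}), this equals $|P|a_k-|P|\sum_{\deg\ell>kg/2}d_k(\ell^2)|\ell|^{-2}+O(|P|^{1/2})$.

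The remaining and most delicate step is to show that this tail, together with the square part of the second sum, is absorbed into the error. Invoking the growth estimate $\sum_{\deg\ell=m}d_k(\ell^2)\ll q^m m^{k(k+1)/2-1}$ from Lemma \ref{Dlk}, both quantities have size $q^{2g+1-kg/2}\,\mathrm{poly}(g)$ after multiplication by $|P|$; for $k\ge 2$ this is $O(|P|^{1/2}(\log|P|)^{k+1})$ on its own (polynomially tight at $k=2$, exponentially smaller beyond). The case $k=1$ is the genuine obstacle: there each term is of size $q^{-g/2}$, too large individually, and one must exploit that the tail and the second-sum square part nearly cancel — this is precisely the diagonal shadow of the symmetry $a_n=a_{2kg-n}q^{n-kg}$ behind (\ref{aproxL(1,xp)k}) — leaving a residual of size $q^{-g}$ whose product with $|P|$ is $O(|P|^{1/2})$. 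I expect this boundary bookkeeping to be where the care is needed; a cleaner alternative that avoids it is to start from the exact expansion $L(1,\x_P)^k=\sum_{\deg f\le 2kg}\x_P(f)d_k(f)|f|^{-1}$, for which the diagonal tail beyond $\deg\ell\le kg$ is unconditionally $O(|P|^{1/2})$ once multiplied by $|P|$, at the modest cost of a larger constant in the off-diagonal estimate.
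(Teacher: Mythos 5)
Your proposal is correct and follows essentially the same route as the paper's proof: insert the approximate functional equation, split into square and non-square $f$, dispose of the non-squares via Lemma \ref{nkbox} (packaged in the paper as Lemma \ref{bound}), and extract $|P|a_k$ from the squares by the Prime Polynomial Theorem together with a tail estimate from Lemma \ref{Dlk} (this is the paper's Lemma \ref{main1}). The one place you go beyond the paper is $k=1$: the paper states Lemmas \ref{bound} and \ref{main1} only for $k\ge 2$ and never treats $k=1$, where, exactly as you observe, the naive tail bound is of size $|P|q^{-[g/2]}\gg|P|^{1/2}$; your repair is sound, since the tail $|P|\sum_{m>[g/2]}q^{-m}=|P|q^{-[g/2]}/(q-1)$ is cancelled at leading order by the square part of the dual sum $|P|q^{-g}\sum_{m\le[(g-1)/2]}q^{m}$ (both reduce to $|P|q^{-[g/2]}/(q-1)$ for either parity of $g$), and your alternative via the exact expansion over $\deg f\le 2kg$, whose diagonal tail after multiplication by $|P|$ is $\ll q^{2g+1-kg}$ up to powers of $g$, hence $O\left(|P|^{\frac{1}{2}}\right)$ already at $k=1$, works equally well.
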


Before we prove the main result we need the following two Lemmas.

\begin{lemma}\label{bound}
	For $k\in\N , k\ge 2,$ and $f$ monic polynomial in $\f$. We have that
	
	\begin{equation*}
	\begin{split}
	1.\text{ }\; q^{-kg} \log|P| \sum_{P\in\p} &\sum_{\substack{f \text{ monic} \\ \deg(f)\le kg-1}} d_k(f) \x_P(f)\\
	& \ll |P|^{1-\frac{k}{2}} q^{\left[\frac{kg-1}{2}\right]} \left(\log|P|\right)^{\frac{k(k+1)}{2}-1}.
	\end{split}
	\end{equation*}
	
	\begin{equation*}
	2.\text{ }\; \log|P| \sum_{P\in\p} \sum_{\substack{f \text{ monic} \\ \deg(f)\le kg \\ f\neq\Box}} \frac{d_k(f)}{|f|} \x_P(f) \ll |P|^{\frac{1}{2}} \left(\log|P|\right)^{k+1}.
	\end{equation*} 
\end{lemma}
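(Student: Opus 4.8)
The plan is to prove both inequalities by the same two-step mechanism: first interchange the order of summation so that the sum over primes $P\in\p$ is innermost, and then split the polynomial sum according to whether $f$ is a perfect square. The non-square terms are estimated by the character-sum bounds of Proposition \ref{chi} and Lemma \ref{NkBox}, whereas for a square $f=h^2$ one has $\x_P(h^2)=1$ whenever $P\nmid h$ (and $0$ otherwise), so the square terms are independent of $P$ and are evaluated by the divisor-sum asymptotics of Lemma \ref{dl} and Lemma \ref{Dlk} together with the prime count $\pi_A(2g+1)\sim|P|/\log|P|$ of Theorem \ref{PNT}. Throughout I use $|P|=q^{2g+1}$ and $\log|P|=2g+1$.

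Estimate (2) is the cleaner of the two, and is in fact Lemma \ref{nkbox}(1). Since every $f$ there is a non-square, interchanging summation and applying Proposition \ref{chi} gives $\big|\sum_{P\in\p}\x_P(f)\big|\ll |P|^{1/2}(\log|P|)^{-1}\deg(f)$; the prefactor $\log|P|$ then cancels the $(\log|P|)^{-1}$, leaving $|P|^{1/2}\sum_{\deg(f)\le kg} d_k(f)\,\deg(f)\,|f|^{-1}$. Grouping by $\deg(f)=n$ and inserting $\sum_{\deg(f)=n}d_k(f)\ll q^{n}n^{k-1}$ from Lemma \ref{dl} cancels the weight $|f|^{-1}=q^{-n}$, so the remaining sum is $\ll\sum_{n\le kg}n^{k}\ll(kg)^{k+1}\ll(\log|P|)^{k+1}$, which is the claim.

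For estimate (1) I would split $\sum_{\deg(f)\le kg-1}d_k(f)\x_P(f)$ into its square and non-square parts. In the square part the range becomes $\deg(h)\le N:=[\frac{kg-1}{2}]$, and summing over the $\sim|P|/\log|P|$ primes turns the weighted sum into $q^{-kg}\log|P|\cdot(|P|/\log|P|)\sum_{\deg(h)\le N}d_k(h^2)$. Lemma \ref{Dlk} gives $\sum_{\deg(h)=n}d_k(h^2)\asymp q^{n}n^{k(k+1)/2-1}$; since the weights grow geometrically, the sum over $n\le N$ is dominated by its top term $\asymp q^{N}N^{k(k+1)/2-1}$, and using $q^{-kg}|P|=|P|^{1-k/2}q^{k/2}$ together with $N\asymp\log|P|$ reproduces the right-hand side $|P|^{1-k/2}q^{[\frac{kg-1}{2}]}(\log|P|)^{k(k+1)/2-1}$ up to the $k$-dependent constant $q^{k/2}$. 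The non-square part is controlled by Lemma \ref{NkBox}, namely $\sum_{P\in\p}\sum_{\deg(f)=n,\,f\neq\Box}\x_P(f)d_k(f)\ll|P|^{1/2}(\log|P|)^{-1}q^{n}n^{k}$; summing over $n\le kg-1$ (again dominated by the top) and multiplying by $q^{-kg}\log|P|$ produces the saving $q^{-kg}q^{kg-1}=q^{-1}$ and leaves a contribution $\ll|P|^{1/2}(\log|P|)^{k}$.

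The hard part is the exponent bookkeeping in the square part of (1): one must check that the normalisation $q^{-kg}$, the prime count $|P|/\log|P|$, the geometric factor $q^{N}$, and the degree-$(k(k+1)/2-1)$ polynomial from Lemma \ref{Dlk} combine to give \emph{exactly} the stated right-hand side. The genuinely delicate point is that this square contribution already saturates the right-hand side, so one must verify that the non-square part does not overwhelm it: comparing exponents, $|P|^{1/2}(\log|P|)^{k}$ is of the same order as the square part when $k=2$ but is \emph{larger} when $k\ge 3$. Thus estimate (1) should be read as bounding the dominant (square) contribution, with the non-square part $\ll|P|^{1/2}(\log|P|)^{k}$ absorbed into the global error $O(|P|^{1/2}(\log|P|)^{k+1})$ of Theorem \ref{L(1.xp)^k} rather than into the right-hand side of (1) itself. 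Two routine points remain: the exceptional terms $P\mid h$ in the square part, which remove only a lower-order portion of the count, and the elementary fact $\sum_{n\le M}q^{n}n^{d}\asymp q^{M}M^{d}$, both of which follow by comparing consecutive terms.
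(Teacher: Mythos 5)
Your proof follows the paper's own argument essentially step for step: the paper likewise splits the sum in (1) into a square part $I_1$ and a non-square part $I_2$, treats $I_1$ with the Prime Polynomial Theorem \ref{PNT} and Lemma \ref{Dlk} (picking up exactly your factor $q^{-kg}|P| = |P|^{1-k/2}q^{k/2}$), bounds $I_2$ by Lemma \ref{nkbox}, and obtains (2) directly from Lemma \ref{nkbox}. The delicate point you flag is real and the paper glosses over it: its proof bounds $I_2 \ll |P|^{1/2}(\log|P|)^{k}$ and then simply asserts part (1), yet for $k\ge 3$ this non-square contribution is exponentially larger in $g$ than the stated right-hand side (compare $q$-exponents $g+\tfrac12$ against roughly $2g+\tfrac12-\tfrac{k}{2}(g+1)$), so the bound in (1) is accurate only if read as the maximum of the two contributions --- which, as you observe, is harmless in the end because the error term $O\left(|P|^{1/2}(\log|P|)^{k+1}\right)$ in Theorem \ref{nagoshis theorem} absorbs both.
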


\begin{proof} $\color{white} rmm$\\
	Put

	\begin{equation*}
	\begin{split}
	I_1  =  q^{-kg} \log|P| \sum_{\substack{f \text{ monic} \\ \deg(f)\le kg-1 \\ f=\Box}} d_k(f) \sum_{P\in\p} \x_P(f)
	\end{split}
	\end{equation*}
	and 

	\begin{equation*}
	\begin{split}
	I_2 = q^{-kg} \log|P| \sum_{\substack{f \text{ monic} \\ \deg(f)\le kg-1 \\ f\neq\Box}} d_k(f) \sum_{P\in\p} \x_P(f),
	\end{split}
	\end{equation*}
	then we have

	\begin{equation*}
	q^{-kg} \log|P| \sum_{P\in\p} \sum_{\substack{f \text{ monic} \\ \deg(f)\le kg-1}} d_k(f) \x_P(f) = I_1 + I_2,
	\end{equation*}
Consider the sum $I_{1},$ since $f$ is a perfect square then we can write $f=l^2, l\in A$. Making use of the Prime Polynomial Theorem \ref{PNT} and Lemma \ref{Dlk}  

\begin{equation*}
\begin{split}
I_{1} 
& \ll |P|^{1-\frac{k}{2}} q^{\left[\frac{kg-1}{2}\right]} \left(\log|P|\right)^{\frac{k(k+1)}{2}-1}.
\end{split}
\end{equation*}
Applying Lemma \ref{nkbox}, $I_2$ is bounded by 

\begin{equation*}
I_{2}\ll |P|^{\frac{1}{2}} \left(\log|P|\right)^k.
\end{equation*}
Hence we obtain the first part of the Lemma. For the second part it follows from Lemma \ref{nkbox}.
\end{proof}

The next lemma we need is the following.

\begin{lemma}\label{main1} For $k\in\N , k\ge 2,$ and $f$ monic polynomial in $\f$. We have
	
	\begin{equation*}
	\begin{split}
	\log|P| \sum_{P\in\p} \sum_{\substack{f \text{ monic} \\ \deg(f)\le kg \\ f=\Box}} \frac{d_k(f)}{|f|} \x_P(f) = |P| a_k + O\left(|P|  q^{-\left[\frac{kg}{2}\right]}  \left(\log|P|\right)^{\frac{k(k+1)}{2}-1} \right),
	\end{split}
	\end{equation*}  
	where $a_k$ is defined as in (\ref{ak}).
\end{lemma}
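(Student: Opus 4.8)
The plan is to use the fact that on a perfect square the quadratic character degenerates into an indicator, which turns the double sum into a prime count whose main term comes from the Prime Polynomial Theorem and whose error is governed by the tail of the series defining $a_k$.

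First I would parametrise the perfect squares by writing $f=l^2$ with $l$ monic, so that $d_k(f)/|f|=d_k(l^2)/|l|^2$ and the condition $\deg(f)\le kg$ becomes $\deg(l)\le\left[\frac{kg}{2}\right]$. Since $\x_P$ is the Legendre symbol, $\x_P(l^2)=\x_P(l)^2$ equals $1$ when $P\nmid l$ and $0$ when $P\mid l$. The inner sum therefore collapses and, after interchanging the order of summation, the left-hand side equals
\[
\log|P|\sum_{\substack{l\text{ monic}\\ \deg(l)\le\left[\frac{kg}{2}\right]}}\frac{d_k(l^2)}{|l|^2}\,\#\{P\in\p : P\nmid l\}.
\]

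Next I would evaluate the prime count. Writing $\#\{P\in\p:P\nmid l\}=\pi_A(2g+1)-\#\{P\in\p:P\mid l\}$, the subtracted term is $O_k(1)$ and vanishes unless $\deg(l)\ge 2g+1$, so it contributes only to the extreme tail and is negligible. Because $\log|P|=2g+1$, Theorem \ref{PNT} gives $\log|P|\cdot\pi_A(2g+1)=|P|+O(\sqrt{|P|})$, which installs the leading factor $|P|$ in front of the $l$-sum.

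Finally I would complete the truncated series to $a_k$. By the definition (\ref{ak}) one has $\sum_{l\text{ monic}}d_k(l^2)/|l|^2=a_k$, so it remains to bound the tail $\sum_{\deg(l)>\left[\frac{kg}{2}\right]}d_k(l^2)/|l|^2$. Here Lemma \ref{Dlk} is the essential input: the asymptotics (\ref{Dl1})--(\ref{leading}), reflecting the pole of order $\frac{k(k+1)}{2}$ at $s=1$, give $\sum_{\deg(l)=n}d_k(l^2)\asymp q^n n^{\frac{k(k+1)}{2}-1}$, whence $\sum_{\deg(l)=n}d_k(l^2)/|l|^2\asymp q^{-n}n^{\frac{k(k+1)}{2}-1}$. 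The geometric decay makes this tail comparable to its first term $q^{-\left[\frac{kg}{2}\right]}\big(\left[\tfrac{kg}{2}\right]\big)^{\frac{k(k+1)}{2}-1}$; multiplying by $|P|$ and using $\left[\frac{kg}{2}\right]\asymp g\asymp\log|P|$ produces exactly the claimed error $O\big(|P|\,q^{-\left[\frac{kg}{2}\right]}(\log|P|)^{\frac{k(k+1)}{2}-1}\big)$. Collecting the three contributions gives $|P|a_k$ plus this error.

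The step I expect to be the main obstacle is the tail estimate just described: one must check that the geometric factor dominates the polynomial growth so the tail is controlled by its leading term, that the exponent $\frac{k(k+1)}{2}-1$ is reproduced exactly, and that the lower-order prime-counting error $O(\sqrt{|P|})$ together with the $P\mid l$ correction are genuinely absorbed into the stated error for the relevant range of $k$.
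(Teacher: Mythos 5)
Your argument is essentially the paper's own proof: write $f=l^2$ (so $\x_P(l^2)=1$ away from the negligible $P\mid l$ terms), apply the Prime Polynomial Theorem \ref{PNT} so that $\log|P|\,|\p| = |P| + O(|P|^{1/2})$, complete the truncated sum to $a_k$, and bound the tail $\sum_{\deg(l)>[kg/2]} d_k(l^2)/|l|^2$ via Lemma \ref{Dlk}, yielding $O\left(|P|\, q^{-\left[\frac{kg}{2}\right]} (\log|P|)^{\frac{k(k+1)}{2}-1}\right)$. The absorption issue you flag at the end is real but shared with the paper: the secondary term $O(|P|^{1/2})$ (the PNT error times the convergent $l$-sum) actually exceeds the displayed error once $k\ge 3$, and the paper's proof silently drops it as well; this is harmless downstream because Theorem \ref{nagoshis theorem} carries the larger error $O(|P|^{1/2}(\log|P|)^{k+1})$.
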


\begin{proof}
Write $f=l^2, l\in A$, Since $f$ is a perfect square, then we have $\x_P(l^2)=1$ for $(P,l)=1$ and $\deg(l)<$ $\deg(P)=2g+1$. By the Prime Polynomial Theorem \ref{PNT} we have  

\begin{equation*}\label{NoI11}
\begin{split}
\log|P| \sum_{P\in\p} \sum_{\substack{f \text{ monic} \\ \deg(f)\le kg \\ f=\Box}} &\frac{d_k(f)}{|f|} \x_P(f) \\
=&  |P| \sum_{\substack{l \text{ monic} \\ \deg(l)\le \left[\frac{kg}{2}\right] }} \frac{d_k(l^2)}{|l|^2} +O\left(|P|^{\frac{1}{2}} \sum_{\substack{l \text{ monic} \\ \deg(l)\le \left[\frac{kg}{2}\right] }} \frac{d_k(l^2)}{|l|^2}\right).
\end{split}
\end{equation*}  
From Lemma \ref{dl} the O-term is bounded by $|P|^{\frac{1}{2}} $. 
%
%
For the main term we have

\begin{equation*}
\begin{split}
|P| \sum_{\substack{l \text{ monic} \\ \deg(l)\le \left[\frac{kg}{2}\right] }} \frac{d_k(l^2)}{|l|^2}&= |P| \left(\sum_{\substack{l \text{ monic} }} \frac{d_k(l^2)}{|l|^2} - \sum_{\substack{l \text{ monic} \\ \deg(l)> \left[\frac{kg}{2}\right] }} \frac{d_k(l^2)}{|l|^2}\right)\\
&= |P| a_k + O\left(|P|  q^{-\left[\frac{kg}{2}\right]}  \left(\log|P|\right)^{\frac{k(k+1)}{2}-1}\right),
\end{split}
\end{equation*}  
where $a_k$ is defined in (\ref{ak}).
\end{proof}

We are now in a position to prove the main result of this section.

\begin{proof}[Proof of Theorem \ref{nagoshis theorem}]
 
From the ``approximate" functional equation (\ref{aproxL(1,xp)k}) we have 

\begin{equation}\label{ap1}
\begin{split}
\sum_{P\in\p} \log|P| & L(1,\x_P)^k \\
= & \log|P| \sum_{P\in\p} \Bigg\{\sum_{\substack{f \text{ monic} \\ \deg(f)\le kg \\ f=\Box}} \frac{d_k(f)}{|f|} \x_P(f) + \sum_{\substack{f \text{ monic} \\ \deg(f)\le kg \\ f\neq\Box}} \frac{d_k(f)}{|f|} \x_P(f) \\
&\text{\color{white} l}  + q^{-kg}  \sum_{\substack{f \text{ monic} \\ \deg(f)\le kg-1}} d_k(f)  \x_P(f) \Bigg\}.
\end{split}
\end{equation}
Applying Lemma \ref{bound} and Lemma \ref{main1} in (\ref{ap1}) we obtain the Theorem \ref{L(1.xp)^k}.

\end{proof}


\subsection{Extending Nagoshi's Results} \label{Proof of L(1.xp)^k} $\color{white}gbg$\\

In this section we extend Theorem \ref{L(1.xp)^k} and write the sum $a_k$ in to a specific form that is more suitable for the calculations that we present in this section. We start with the following lemma.

\begin{lemma}\label{main2} For $k\in\N , k\ge 2,$ and $f$ monic polynomial in $\f$. We have that

	\begin{equation*}
	\begin{split}
	\log|P| \sum_{P\in\p} \sum_{\substack{f \text{ monic} \\ \deg(f)\le kg \\ f=\Box}} & \frac{d_k(f)}{|f|} \x_P(f) \\
	& = |P| B_k +O\left(|P|^{\frac{1}{2}} q^{-\left[\frac{kg}{2}\right]} \left(\lo|P|\right)^{\frac{k(k+1)}{2}-1}\right),
	\end{split}
	\end{equation*}  
	where 	

	\begin{equation*}
	\begin{split}
	B_k= &  \sum_{n=0}^{\left[kg/2\right]} \left(\sum_{i=1}^{k(k+1)/2} c_{-i} \binom{n+i-1}{i-1} \left(-q\right)^i\right) q^{-n}.
	\end{split}
	\end{equation*}
	The sum is parenthesis is a polynomial in $n$ of degree $\frac{k(k+1)}{2}-1$ with leading term 

	\begin{equation*}
	\frac{A_k(1)}{\left(\frac{k(k+1)}{2}-1\right)!} n^{\frac{k(k+1)}{2}-1}.
	\end{equation*}
\end{lemma}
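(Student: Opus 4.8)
The plan is to run the same reduction used in the proof of Lemma~\ref{main1}, but to stop at the explicit finite sum furnished by Lemma~\ref{Dlk} instead of completing it to the constant $a_k$. First I would write each square $f$ as $f=l^2$ with $l$ monic and $\deg(l)\le\left[\frac{kg}{2}\right]$. For such $l$ one has $\x_P(l^2)=1$ whenever $P\nmid l$, and since $\deg(l)<\deg(P)=2g+1$ forces $P\nmid l$, the inner character sum collapses to $\sum_{P\in\p}\x_P(l^2)=\pi_A(2g+1)$; the finitely many $l$ of larger degree that are divisible by some $P$ (which can occur only once $\left[\frac{kg}{2}\right]\ge 2g+1$) contribute $O(1)$ per prime and are harmless. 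By the Prime Polynomial Theorem (Theorem~\ref{PNT}) this yields $\lo|P|\cdot\pi_A(2g+1)=|P|+O\!\left(|P|^{1/2}\right)$, so that
\begin{equation*}
\lo|P|\sum_{P\in\p}\sum_{\substack{f\text{ monic}\\ \deg(f)\le kg\\ f=\Box}}\frac{d_k(f)}{|f|}\x_P(f)
=|P|\sum_{\substack{l\text{ monic}\\ \deg(l)\le[kg/2]}}\frac{d_k(l^2)}{|l|^2}
+O\!\left(|P|^{1/2}\sum_{\substack{l\text{ monic}\\ \deg(l)\le[kg/2]}}\frac{d_k(l^2)}{|l|^2}\right).
\end{equation*}

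Next I would evaluate the truncated sum degree by degree. Grouping the terms with $\deg(l)=n$ and inserting the asymptotic (\ref{Dl1}) of Lemma~\ref{Dlk},
\begin{equation*}
\sum_{\substack{l\text{ monic}\\ \deg(l)=n}}d_k(l^2)=q^n\left(\sum_{i=1}^{k(k+1)/2}c_{-i}\binom{n+i-1}{i-1}(-q)^i\right)+O\!\left(q^{\epsilon n}\right),
\end{equation*}
and dividing by $|l|^2=q^{2n}$, the block of degree $n$ contributes $q^{-n}\left(\sum_i c_{-i}\binom{n+i-1}{i-1}(-q)^i\right)+O(q^{(\epsilon-2)n})$. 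Summing over $0\le n\le\left[\frac{kg}{2}\right]$ reproduces exactly the finite sum $B_k$ as the main term and leaves a geometric-type remainder $\sum_{n\le[kg/2]}O(q^{(\epsilon-2)n})$. The assertion about the leading term of the inner polynomial then needs no separate argument: it is precisely the leading-term statement (\ref{leading}) of Lemma~\ref{Dlk}, namely $\frac{A_k(1)}{\left(\frac{k(k+1)}{2}-1\right)!}\,n^{\frac{k(k+1)}{2}-1}$.

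The step I expect to be the main obstacle is the error accounting that produces the stated bound. The factor $q^{-[kg/2]}(\lo|P|)^{\frac{k(k+1)}{2}-1}$ should be recognised as the size of the last retained block (equivalently, of the tail), since by Lemma~\ref{dl} the degree-$n$ contribution to $\sum\frac{d_k(l^2)}{|l|^2}$ is of order $q^{-n}n^{\frac{k(k+1)}{2}-1}$ and $\left[\frac{kg}{2}\right]\asymp\lo|P|$, while the prefactor $|P|^{1/2}$ comes from the Prime Polynomial Theorem remainder above. The delicate point is that these two sources of error must be combined carefully: one has to control how the $O(|P|^{1/2})$ coming from $\pi_A(2g+1)$ is weighted against the partial sum $\sum_{\deg(l)\le[kg/2]}\frac{d_k(l^2)}{|l|^2}$, and how the $O(q^{(\epsilon-2)n})$ truncation errors from Lemma~\ref{Dlk} are collected, in order to arrive at the clean form $O\!\left(|P|^{1/2}q^{-[kg/2]}(\lo|P|)^{\frac{k(k+1)}{2}-1}\right)$ rather than a cruder estimate.

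I would organise this last step by comparing $B_k$ directly against the completed sum already controlled in Lemma~\ref{main1}: writing $B_k=a_k-\big(a_k-B_k\big)$ and bounding the difference by the tail estimate of Lemma~\ref{dl}, so that the passage from the main term $|P|a_k$ (already established) to $|P|B_k$ costs only the tail, thereby keeping the genus-dependence $q^{-[kg/2]}(\lo|P|)^{\frac{k(k+1)}{2}-1}$ explicit. This comparison, together with the Prime Polynomial Theorem remainder, is exactly the bookkeeping where the claimed error bound must be verified, and it is the heart of the proof.
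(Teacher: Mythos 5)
Your main-term derivation coincides with the paper's proof step for step: write $f=l^2$, use $\x_P(l^2)=1$ and the Prime Polynomial Theorem \ref{PNT} to evaluate the sum over $P\in\p$, insert the blockwise asymptotic (\ref{Dl1}) of Lemma \ref{Dlk}, divide by $q^{2n}$ and sum over $n\le\left[\frac{kg}{2}\right]$ to produce exactly $B_k$, with the leading-term claim read off from (\ref{leading}). The one place you go beyond the paper --- observing that for $k\ge 5$ one can have $\deg(l)\ge 2g+1$, so some $l$ are divisible by $P$ and then $\x_P(l^2)=0$ rather than $1$ --- patches a point the paper silently elides (its Lemma \ref{main1} argument only justifies $(P,l)=1$ when $\deg(l)<2g+1$), and your estimate that these terms are negligible is correct.

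Your concluding device, however, is where the proposal genuinely diverges and would fail. The paper never detours through $a_k$: it keeps every sum truncated at $\left[\frac{kg}{2}\right]$ and bounds the two resulting error sums directly, so that the $|P|^{1/2}$ from the Prime Polynomial Theorem remainder survives as the prefactor of the stated error. If instead you write $B_k=a_k-(a_k-B_k)$ and import the main term $|P|a_k$ from Lemma \ref{main1}, you inherit that lemma's error $O\bigl(|P|\,q^{-\left[\frac{kg}{2}\right]}(\log|P|)^{\frac{k(k+1)}{2}-1}\bigr)$, and the correction $|P|(a_k-B_k)$ is likewise weighted by $|P|$; since the tail itself has size $\asymp q^{-\left[\frac{kg}{2}\right]}(\log|P|)^{\frac{k(k+1)}{2}-1}$, this route can only yield an error with prefactor $|P|$, a full factor $|P|^{1/2}$ weaker than the Lemma asserts, so it is structurally incapable of proving the statement. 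A second defect of the same device: $a_k-B_k$ is not only a tail, because $B_k$ is built from the polynomial approximations of Lemma \ref{Dlk}; the difference also contains the accumulated per-degree errors $\sum_{n\le\left[\frac{kg}{2}\right]}O\bigl(q^{(\epsilon-2)n}\bigr)$, a convergent geometric series dominated by its \emph{small}-$n$ terms and hence $O(1)$ in total, which would contribute $O(|P|)$ and swamp everything. Your instinct that the bookkeeping is the delicate point is sound --- indeed even the paper's printed bounds replace the convergent sums $\sum_{n\le\left[\frac{kg}{2}\right]}q^{(\epsilon-2)n}$ and $\sum_{n\le\left[\frac{kg}{2}\right]}q^{-n}n^{\frac{k(k+1)}{2}-1}$ by their final terms, which is not valid as written --- but the repair must be carried out inside the truncated expressions, as the paper attempts; a comparison with the completed sum $a_k$ moves in the wrong direction.
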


\begin{proof}
As in Lemma \ref{main1}, write $f=l^2, l\in A$, then from the Prime Polynomial Theorem \ref{PNT} we have 

\begin{equation*}\label{oI11}
\begin{split}
I= \log|P| & \sum_{P\in\p} \sum_{\substack{f \text{ monic} \\ \deg(f)\le kg \\ f=\Box}} \frac{d_k(f)}{|f|} \x_P(f) \\
&  = |P| \sum_{n=0}^{\left[kg/2\right]} q^{-2n} \sum_{\substack{l \text{ monic} \\ \deg(l)=n}} d_k(l^2) +O\left(|P|^{\frac{1}{2}} \sum_{n=0}^{\left[kg/2\right]} q^{-2n} \sum_{\substack{l \text{ monic} \\ \deg(l)=n}} d_k(l^2) \right).
\end{split}
\end{equation*}  
Using Lemma \ref{Dlk} we have

\begin{equation*}
\begin{split}
I   &= |P| \sum_{n=0}^{\left[kg/2\right]} q^{-2n} q^n \left(\sum_{i=1}^{(k(k+1))/2} c_{-i} \binom{n+i-1}{i-1} \left(-q\right)^i\right)\\
& \text{\color{white}nbncjnbn mbn}+O\left(|P| \sum_{n=0}^{\left[kg/2\right]} q^{-2n} q^{\epsilon n} \right) +O\left(|P|^{\frac{1}{2}} \sum_{n=0}^{\left[kg/2\right]} q^{-2n} q^n n^{\frac{k(k+1)}{2}-1} \right)\\
& \\
& = |P| \sum_{n=0}^{\left[kg/2\right]} \sum_{i=1}^{(k(k+1))/2} c_{-i} \binom{n+i-1}{i-1} \left(-q\right)^i q^{-n} +O\left(|P|  q^{(\epsilon-2)\left[\frac{kg}{2}\right]}  \right) \\
& \text{\color{white}nbncjnbn mbn} +O\left(|P|^{\frac{1}{2}} q^{-\left[\frac{kg}{2}\right]} g^{\frac{k(k+1)}{2}-1} \right)\\
&\\
& = |P| B_k +O\left(|P|^{\frac{1}{2}} q^{-\left[\frac{kg}{2}\right]} \left(\lo|P|\right)^{\frac{k(k+1)}{2}-1}\right).\\
\end{split}
\end{equation*}  
\end{proof}

From Lemma \ref{bound}, Lemma \ref{main2} and equation (\ref{ap1}) we establish the following theorem.

\begin{theorem}\label{L(1.xp)kk}
Let $k\in\N$, $q$ be a fixed power of an odd prime. We have that
	
\begin{equation*}
\begin{split}
&\sum_{P\in\p} \log|P| L(1,\x_P)^k = |P| B_k + O\left(|P|^{\frac{1}{2}}  \left(\log|P|\right)^{k+1}\right).
\end{split}
\end{equation*}
where $B_k$ is defined as in Lemma \ref{main2}.
\end{theorem}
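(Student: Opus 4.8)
The plan is to assemble the theorem directly from the approximate functional equation (\ref{aproxL(1,xp)k}) and the two preparatory lemmas, since every analytic estimate has already been isolated there. First I would take the identity (\ref{aproxL(1,xp)k}) for $L(1,\x_P)^k$, multiply through by $\log|P|$, and sum over $P\in\p$. The right-hand side then splits into the \emph{principal} sum over $f$ with $\deg(f)\le kg$ weighted by $1/|f|$, and the \emph{dual} sum over $f$ with $\deg(f)\le kg-1$ weighted by $q^{-kg}$. Partitioning the principal sum according to whether $f$ is a perfect square gives exactly the three-term decomposition recorded in (\ref{ap1}).

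Next I would dispose of each of the three pieces in turn. The perfect-square part of the principal sum is the only source of a main term: applying Lemma \ref{main2} produces $|P|B_k$ together with an error $O(|P|^{1/2}q^{-[kg/2]}(\log|P|)^{k(k+1)/2-1})$, where $B_k$ is the truncated zeta-type sum built from the coefficients $c_{-i}$ of Lemma \ref{Dlk}. The non-square part of the principal sum is governed by part (2) of Lemma \ref{bound}, which bounds it by $O(|P|^{1/2}(\log|P|)^{k+1})$; this is precisely the term that fixes the shape of the error in the statement. Finally, the weighted dual sum is controlled by part (1) of Lemma \ref{bound}, contributing $O(|P|^{1-k/2}q^{[(kg-1)/2]}(\log|P|)^{k(k+1)/2-1})$.

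The only genuine work is the error-term bookkeeping, which I expect to be the main (if mild) obstacle, because for $k\ge 3$ the dual-sum error carries \emph{more} powers of $\log|P|$ than the advertised bound. Writing $|P|=q^{2g+1}$ and $\log|P|=2g+1$, I would compare $q$-exponents. The dual-sum error of part (1) of Lemma \ref{bound} has $q$-exponent equal to $2g+\tfrac12-\tfrac{k}{2}(g+1)$ up to a bounded amount, which for $k\ge 2$ falls short of the exponent $g+\tfrac12$ of $|P|^{1/2}$ by $g(\tfrac{k}{2}-1)+\tfrac{k}{2}$; this saving equals $1$ when $k=2$ and grows linearly in $g$ when $k\ge 3$. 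Since a strictly positive power of $q$ dominates any fixed power of $\log|P|$, that surplus absorbs the $(\log|P|)^{k(k+1)/2-1}$ factor, so the dual-sum contribution is $O(|P|^{1/2}(\log|P|)^{k+1})$ (indeed far smaller). The square-part error of Lemma \ref{main2} carries the extra factor $q^{-[kg/2]}$ and is therefore negligible, while the non-square error is exactly of the claimed size. Collecting the main term $|P|B_k$ with these three uniformly $O(|P|^{1/2}(\log|P|)^{k+1})$ errors yields the theorem for $k\ge 2$; the case $k=1$ follows from Theorem \ref{L(1.xp)^k} on identifying $B_1$ with $a_1$ up to a negligible truncation tail.
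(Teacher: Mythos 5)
Your proposal follows the paper's proof exactly: the paper's entire argument for this theorem is the one-line assembly ``From Lemma \ref{bound}, Lemma \ref{main2} and equation (\ref{ap1})'', i.e.\ the same three-way split of the approximate functional equation into the square part of the principal sum (Lemma \ref{main2}, giving $|P|B_k$), the non-square part (Lemma \ref{bound}(2), giving the stated error), and the dual sum (Lemma \ref{bound}(1)). Your explicit bookkeeping for the dual sum --- that its $q$-exponent is $2g+\tfrac12-\tfrac{k}{2}(g+1)$, falling short of $g+\tfrac12$ by $g(\tfrac{k}{2}-1)+\tfrac{k}{2}$, so the exponential saving absorbs the factor $(\log|P|)^{k(k+1)/2-1}$ for $k\ge 3$ while for $k=2$ the log-power is already below $k+1$ --- is correct and in fact supplies a verification the paper leaves entirely implicit.

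One caveat concerns your $k=1$ patch, which does not work as stated. For $k=1$ one has $a_1-B_1=\sum_{n>[g/2]}q^{-n}\asymp q^{-g/2}=|P|^{-1/4}$, so replacing $a_1$ by $B_1$ in Theorem \ref{L(1.xp)^k} costs $|P|(a_1-B_1)\asymp|P|^{3/4}$, which is \emph{not} absorbed by $O(|P|^{1/2}(\log|P|)^{2})$; the truncation tail is genuinely too large, reflecting the fact that for $k=1$ the square terms of the dual sum contribute at size $q^{3g/2}$ rather than $|P|^{1/2}$ up to logs. This gap is, however, inherited from the paper rather than introduced by you: Lemmas \ref{bound}, \ref{main1} and \ref{main2} are all stated only for $k\ge 2$, the theorem is asserted for $k\in\N$, and the paper's proof never addresses $k=1$ at all. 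For $k\ge 2$, the range the paper's lemmas actually cover, your argument is complete and coincides with the paper's.
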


For any non-constant irreducible polynomial $P\in A$ with sgn$(P)\in\{1,\gamma\},$ where $\gamma$ is a fix generator of $\mathbb{F}_q^\times$, let $\mathcal{O}$ be the integer closure of $A$ in the quadratic function field $k\left(\sqrt{P}\right)$. Let $h_P$ be the ideal class number of $\mathcal{O}$, and $R_P$ be the regulator of $\mathcal{O}$ if $\deg(P)$ is even and sgn$(P)=1$. We have a formula, quoted from \cite{Rosen} Theorem 17.8, which connects $L(1,\x_P)$ with $h_P$, namely

\begin{equation}\label{hP}
L(1,\x_P) = \begin{cases}
\sqrt{q} |P|^{-\frac{1}{2}} h_P  & \text{if } \deg(P) \text{ is odd,}\\
(q-1) |P|^{-\frac{1}{2}} h_P R_P  & \text{if } \deg(P) \text{ is even and sgn}(P)=1,\\ 
\frac{1}{2} (q+1) |P|^{-\frac{1}{2}} h_P  & \text{if } \deg(P) \text{ is even and sgn}(P)=\gamma.
\end{cases}
\end{equation}

Combining Theorem \ref{L(1.xp)kk} and equation (\ref{hP}), we obtain the following corollary.

\begin{corollary}
	Let $q$ be a fixed power of an odd prime. Then with the same notation as in Lemma \ref{main2}, we have that
	
	\begin{equation*}
	\begin{split}
	&\sum_{P\in \p} \left(h_P\right)^k = \frac{|P|^{1+\frac{k}{2}}}{\log|P|} q^{-\frac{k}{2}} B_k + O\left(|P|^{\frac{k+1}{2}}  \left(\log|P|\right)^{k}\right).
	\end{split}
	\end{equation*}
\end{corollary}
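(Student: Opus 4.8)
The plan is to convert the $L$-value moment of Theorem \ref{L(1.xp)kk} directly into a class-number moment by means of the analytic class number formula (\ref{hP}). Since every $P\in\p=\mathbb{P}_{2g+1}$ has odd degree $2g+1$, only the first branch of (\ref{hP}) ever applies, giving $L(1,\x_P)=\sqrt{q}\,|P|^{-1/2}h_P$ for each such $P$. Raising to the $k$-th power and solving for $h_P^k$ yields the pointwise identity
\[
h_P^k = q^{-k/2}\,|P|^{k/2}\,L(1,\x_P)^k .
\]

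The crucial observation is that $|P|=q^{2g+1}$ is \emph{the same} for every $P$ in $\p$, so that both the factor $|P|^{k/2}$ and the factor $\log|P|$ occurring in Theorem \ref{L(1.xp)kk} are constants with respect to the summation over $P$. First I would sum the identity above over $P\in\p$ and pull the constant $q^{-k/2}|P|^{k/2}$ outside the sum:
\[
\sum_{P\in\p} h_P^k = q^{-k/2}\,|P|^{k/2}\sum_{P\in\p} L(1,\x_P)^k .
\]
Next I would invoke Theorem \ref{L(1.xp)kk}, which after dividing through by the constant $\log|P|$ reads
\[
\sum_{P\in\p} L(1,\x_P)^k = \frac{|P|}{\log|P|}\,B_k + O\!\left(|P|^{1/2}\,(\log|P|)^{k}\right).
\]

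Substituting this into the previous display and multiplying by $q^{-k/2}|P|^{k/2}$ produces the main term $q^{-k/2}\,\frac{|P|^{1+k/2}}{\log|P|}\,B_k$ together with an error of size $q^{-k/2}\,|P|^{(k+1)/2}(\log|P|)^{k}$. Since $q$ is fixed, the harmless factor $q^{-k/2}$ is absorbed into the implied constant, yielding exactly the claimed asymptotic. There is no genuine obstacle here: the argument is a direct substitution, and the only point that must be checked carefully is that $|P|$ is truly constant across the summation set $\p$, which is precisely what licenses factoring it out of both the main sum and the error term without any loss.
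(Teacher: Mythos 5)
Your proposal is correct and is exactly the argument the paper intends: the corollary is stated there as an immediate combination of Theorem \ref{L(1.xp)kk} with the odd-degree case of the class number formula (\ref{hP}), using precisely the fact that $|P|=q^{2g+1}$ and $\log|P|=2g+1$ are constant over $\p$ so they can be pulled through the sum. Your bookkeeping of the error term, where dividing $O\left(|P|^{1/2}(\log|P|)^{k+1}\right)$ by $\log|P|$ and multiplying by $q^{-k/2}|P|^{k/2}$ gives $O\left(|P|^{(k+1)/2}(\log|P|)^{k}\right)$, matches the stated bound.
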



\subsection{The Second Moment of $L(1,\x_P)$} \label{Proof of L(1,xp)^2} $\color{white}gbg$\\

We start this section proving the following lemma.

\begin{lemma}\label{main21}
	Let $f$ monic polynomial in $A=\f$. We have
	
	\begin{equation*}
	\begin{split}
	\log|P| &\sum_{P\in\p} \sum_{\substack{f \text{ monic} \\ \deg(f)\le 2g \\ f=\Box}}  \frac{d(f)}{|f|} \x_P(f) \\
	 = &|P| \frac{1}{2}\z(2)^2 q^{-2} \Bigg(q^{-g-1} ( g^2 \left(-q^2+2 q-1\right)+g \left(-5 q^2+4 q+1\right)\\
	& -6 q^2 )+ 2 q^2+2 q+2 \Bigg)  +O\left(\left(\log|P|\right)^{2} \right).
	\end{split}
	\end{equation*}  
\end{lemma}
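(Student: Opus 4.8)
The plan is to exploit the fact that, throughout the range of summation, the quadratic character takes the value $1$, so the entire character-sum structure collapses and the problem reduces to a single explicit finite sum. First I would observe that every $f$ in the inner sum is a perfect square, so I write $f=l^2$ with $l$ monic; the condition $\deg(f)\le 2g$ then reads $\deg(l)\le g$. Since $P\in\p$ is irreducible of degree $2g+1$ and $\deg(l)\le g<2g+1$, we have $P\nmid l$, whence $\x_P(l^2)=\x_P(l)^2=1$ for every such $l$ and every $P$. Consequently the inner character sum is constant in $l$ and
\begin{equation*}
\log|P| \sum_{P\in\p} \sum_{\substack{f \text{ monic} \\ \deg(f)\le 2g \\ f=\Box}} \frac{d(f)}{|f|} \x_P(f) = \log|P|\cdot\pi_A(2g+1)\sum_{\substack{l \text{ monic}\\ \deg(l)\le g}} \frac{d(l^2)}{|l|^2}.
\end{equation*}

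Next, using $\log|P|=2g+1$ together with the Prime Polynomial Theorem \ref{PNT}, I would write $\log|P|\cdot\pi_A(2g+1)=(2g+1)\pi_A(2g+1)=|P|+O(|P|^{1/2})$. Setting $S_g:=\sum_{\deg(l)\le g} d(l^2)/|l|^2$, which is bounded (it converges to $a_2$ of (\ref{ak}) as $g\to\infty$), the whole expression equals $|P|\,S_g$ up to an error governed by the prime polynomial theorem. Everything thus reduces to the exact evaluation of $S_g$.

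For this I would substitute the closed form (\ref{Dl2}) of Lemma \ref{Dlk}, which gives
\begin{equation*}
S_g=\sum_{n=0}^{g} q^{-n}\Big(1+\tfrac12(3+q^{-1})n+\tfrac12(1-q^{-1})n^2\Big),
\end{equation*}
a finite combination of $\sum_{n=0}^{g}x^n$, $\sum_{n=0}^{g}nx^n$ and $\sum_{n=0}^{g}n^2x^n$ with $x=q^{-1}$. I would split each of these as its value at $g=\infty$ minus a tail. The infinite parts recombine into $a_2=(q^2+q+1)/(q-1)^2=\tfrac12\z(2)^2q^{-2}(2q^2+2q+2)$ (using $\z(2)=q/(q-1)$), which is exactly the $g$-independent block of the claimed main term; each tail carries a factor $x^{g+1}=q^{-g-1}$, and together they produce the degree-two polynomial in $g$ attached to $q^{-g-1}$.

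The only real obstacle is the bookkeeping: summing the three geometric-type series in closed form and collecting the tails into $g^2(-q^2+2q-1)+g(-5q^2+4q+1)-6q^2$ with the precise coefficients and signs. A useful internal check is $\tfrac12(1-q^{-1})=\tfrac{q-1}{2q}$ together with $-q^2+2q-1=-(q-1)^2$, which makes the $g^2$-coefficient transparent and guards against sign slips in the $g^1$ and $g^0$ terms. Finally, the error term originates from the Prime Polynomial Theorem \ref{PNT} applied to $\pi_A(2g+1)$: after multiplication by $\log|P|$ and the bounded factor $S_g$ it is $O(|P|^{1/2})$ (one may sharpen it to $O(|P|^{1/3})$ by using the exact von Mangoldt identity $\sum_{\deg f=2g+1}\Lambda(f)=q^{2g+1}$ and discarding the prime-power contributions), and this dominates the polynomial-in-$g$ correction already displayed in the main term.
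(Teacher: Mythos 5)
Your proposal is correct and is essentially the paper's own proof: you write $f=l^2$ (so $\x_P(f)=1$ automatically, since $\deg(l)\le g<\deg(P)$), apply the Prime Polynomial Theorem \ref{PNT} to the sum over $P\in\p$, insert the closed form (\ref{Dl2}) of Lemma \ref{Dlk}, and evaluate the resulting sums $\sum_{n=0}^{g}q^{-n}$, $\sum_{n=0}^{g}nq^{-n}$, $\sum_{n=0}^{g}n^2q^{-n}$, with the infinite parts recombining into $\tfrac12\z(2)^2q^{-2}(2q^2+2q+2)=(q^2+q+1)/(q-1)^2$ and the tails producing the $q^{-g-1}$-weighted quadratic in $g$, exactly as in the paper. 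Two minor caveats: your error term $O\left(|P|^{\frac{1}{2}}\right)$ is precisely what the paper's argument also yields (its intermediate bound is $O\left(|P|^{\frac{1}{2}}\sum_{n\le g}q^{-n}n^2\right)$), so the $O\left(\left(\log|P|\right)^{2}\right)$ in the lemma's statement is a slip of the paper rather than a gap in your argument; and your closing domination claim is reversed, since the displayed corrections are of size $|P|q^{-g-1}g^2\asymp q^g g^2$, which for fixed $q$ and large $g$ dominates the Prime Polynomial Theorem error $O\left(|P|^{\frac{1}{2}}\right)=O\left(q^{g+\frac12}\right)$ (and a fortiori your sharpened $O\left(|P|^{\frac{1}{3}}\right)$) --- which is exactly why those terms are kept in the main term.
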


\begin{proof}
	Write $f=l^2$, since $\x_P(l^2)=1$ for $(P,l)=1$ and $\deg(l)< \deg(P)=2g+1$, using the Prime Polynomial Theorem \ref{PNT} we have 
	
	\begin{equation*}\label{oT11}
	\begin{split}
	T= \log|P| &\sum_{P\in\p} \sum_{\substack{f \text{ monic} \\ \deg(f)\le 2g \\ f=\Box}}  \frac{d(f)}{|f|} \x_P(f) \\
	&  = |P| \sum_{n=0}^{g} q^{-2n} \sum_{\substack{l \text{ monic} \\ \deg(l)=n}} d(l^2) +O\left(|P|^{\frac{1}{2}} \sum_{n=0}^{g} q^{-2n} \sum_{\substack{l \text{ monic} \\ \deg(l)=n}} d(l^2) \right).
	\end{split}
	\end{equation*}  
	Using Lemma \ref{Dlk} we have
	
	\begin{equation*}
	\begin{split}
	T  
	 =& |P| \sum_{n=0}^{g} q^{-n} \left(1+\frac{1}{2}\left(3+q^{-1}\right)n+\frac{1}{2}\left(1-q^{-1}\right)n^2\right)  +O\left(|P|^{\frac{1}{2}} \sum_{n=0}^{g} q^{-n} n^{2} \right)\\
	= &|P| \frac{1}{2}\z(2)^2 q^{-2} \Bigg(q^{-g-1} ( g^2 \left(-q^2+2 q-1\right)+g \left(-5 q^2+4 q+1\right)\\
	&\ \ \  -6 q^2 )+ 2 q^2+2 q+2 \Bigg)  +O\left(\left(\log|P|\right)^{2} \right).
	\end{split}
	\end{equation*}  	
This proves the lemma.
\end{proof}

Now, consider the ``approximate" functional equation (\ref{aproxL(1,xp)k}) when $k=2$, 

\begin{equation*}
\begin{split}
\sum_{P\in\p} & \log|P| L(1,\x_P)^2 \\
 &=\log|P| \sum_{P\in\p} \Bigg\{\sum_{\substack{f \text{ monic} \\ \deg(f)\le 2g \\ f=\Box}}  \frac{d(f)}{|f|} \x_P(f) + \sum_{\substack{f \text{ monic} \\ \deg(f)\le 2g \\ f\neq\Box}}  \frac{d(f)}{|f|} \x_P(f)\\ 
&\ \ \  + q^{-2g} \sum_{\substack{f \text{ monic} \\ \deg(f)\le 2g-1}} d(f)  \x_P(f)\Bigg\}.\\
\end{split}
\end{equation*}	

From Lemma \ref{main21} and Lemma \ref{bound} with $k=2$ we proved the asymptotic formula for the second moment of $L(1,\x_P)$. 

\begin{theorem}\label{L(1,xP)^2}
	\begin{equation*}
	\begin{split}
	&\sum_{P\in\p} \log|P| L\left(1,\x_P\right)^2 =  |P| \z(2)^2 q^{-2} \left( q^2+ q+ 1\right) + O\left(|P|^{\frac{1}{2}}  \left(\log|P|\right)^{3}\right).
	\end{split}
	\end{equation*}
\end{theorem}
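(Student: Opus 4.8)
The plan is to feed the case $k=2$ of the approximate functional equation (\ref{aproxL(1,xp)k}) into the left-hand side, multiply through by $\log|P|$, sum over $P\in\p$, and then invoke the two lemmas already established for the three resulting pieces. Writing $d_2=d$, equation (\ref{aproxL(1,xp)k}) expresses $L(1,\x_P)^2$ as a main Dirichlet sum over monic $f$ with $\deg(f)\le 2g$ together with a dual sum over monic $f$ with $\deg(f)\le 2g-1$ weighted by $q^{-2g}$. First I would split the main sum according to whether $f$ is a perfect square, so that after multiplying by $\log|P|$ and summing over $P\in\p$ the quantity of interest becomes the sum of three terms: the square part $\log|P|\sum_{P}\sum_{f=\Box}d(f)|f|^{-1}\x_P(f)$, the non-square part of the main sum, and the dual sum.

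For the main term I would appeal to Lemma \ref{main21}, which evaluates the square part as $|P|$ times $\tfrac12\z(2)^2 q^{-2}$ times an explicit polynomial in $g$, up to an error $O\big((\log|P|)^2\big)$. The decisive observation is that every term inside that polynomial except the constant $2q^2+2q+2$ carries a factor $q^{-g-1}$ and hence vanishes as $g\to\infty$; the surviving constant yields the main term $|P|\,\z(2)^2 q^{-2}(q^2+q+1)$, after simplifying $\tfrac12(2q^2+2q+2)=q^2+q+1$. Thus the square part equals precisely the claimed main term plus an $O\big((\log|P|)^2\big)$ error to be absorbed later.

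For the two remaining pieces I would apply Lemma \ref{bound} with $k=2$. Part (2) bounds the non-square part of the main sum by $O\big(|P|^{1/2}(\log|P|)^3\big)$, and this is exactly the dominant error term in the statement. Part (1) bounds the dual sum by $|P|^{1-k/2}q^{[(kg-1)/2]}(\log|P|)^{k(k+1)/2-1}$; with $k=2$ and $|P|=q^{2g+1}$ this reads $q^{g-1}(\log|P|)^2$, and since $q^{g-1}\le q^{g+1/2}=|P|^{1/2}$ this contribution is $O\big(|P|^{1/2}(\log|P|)^2\big)$, comfortably absorbed. Collecting the three estimates gives the asserted formula.

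Because the analytic work has been front-loaded into Lemmas \ref{main21} and \ref{bound}, the only genuine obstacle here is careful bookkeeping: one must confirm that the explicit $g$-dependent expression of Lemma \ref{main21} collapses to the clean constant $q^2+q+1$ in the limit, and that each error term --- the $O\big((\log|P|)^2\big)$ from the square part, the dual-sum bound $q^{g-1}(\log|P|)^2$, and the non-square bound $|P|^{1/2}(\log|P|)^3$ --- is genuinely dominated by $|P|^{1/2}(\log|P|)^3$. No estimate beyond the quoted lemmas is required.
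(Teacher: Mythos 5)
Your proposal is correct and follows the paper's own route exactly: the paper likewise specializes the approximate functional equation (\ref{aproxL(1,xp)k}) to $k=2$, splits the resulting sum into the square part, the non-square part and the dual sum, and then quotes Lemma \ref{main21} for the main term and Lemma \ref{bound} with $k=2$ for the two error pieces. One cosmetic point: the $q^{-g-1}$-weighted terms in Lemma \ref{main21} do not literally vanish after multiplication by $|P|=q^{2g+1}$ (they are of size $g^{2}q^{g}\ll|P|^{1/2}\left(\log|P|\right)^{2}$), but, as your final bookkeeping implicitly requires, they are absorbed into the error $O\left(|P|^{1/2}\left(\log|P|\right)^{3}\right)$.
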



\subsection{Applying Theorem \ref{L(1.xp)^k} when $k=2$}$\text{\color{white}dlkjh}$\\

In this section we use Theorem \ref{L(1.xp)kk} to obtain an explicit formulae for the second moment of quadratic Dirichlet $L$-functions associated to $\x_P$ over function fields, then compare it with the result that we established in Section \ref{Proof of L(1,xp)^2}. For $k=2$, 

\begin{equation*}
\begin{split}
\sum_{i=1}^{3}& c_{-i} \binom{n+i-1}{i-1} \left(-q\right)^i \\
& =-\frac{1}{2} c_{-3}  q^3 n^2+ \left(c_{-2} q^2-\frac{3}{2} c_{-3} q^3\right) n -\left(c_{-3} q^3 - c_{-2} q^2+ c_{-1} q\right),
\end{split}
\end{equation*}
and so 

\begin{equation*}
\begin{split}
B_2
=& \sum_{n=0}^{g} \left(-\frac{1}{2} c_{-3}  q^3 n^2+ \left(c_{-2} q^2-\frac{3}{2} c_{-3} q^3\right) n -\left(c_{-3} q^3 - c_{-2} q^2+ c_{-1} q\right)\right) q^{-n}\\
=&  \z(2)^2 q^{-2} \left(-\frac{ q^6}{q-1} c_{-3} +c_{-2} q^4-c_{-1} (q-1) q^2 \right)+O\left(|P|^{-\frac{1}{2}} \left(\log|P|\right)^2\right).
\end{split}
\end{equation*}

Hence, the second moment using Theorem \ref{L(1.xp)kk} is

\begin{equation}\label{nagoshk=2}
\begin{split}
& \sum_{P\in\p} \log|P|L\left(1,\x_P\right)^2 \\
&= |P|B_2 + O\left(|P|^{\frac{1}{2}} \left(\log|P|\right)^3\right)\\
&= \z(2)^2 |P| q^{-2} \left(-\frac{ q^6}{q-1} c_{-3} +c_{-2} q^4-c_{-1} (q-1) q^2 \right)+ O\left(|P|^{\frac{1}{2}}  \left(\log|P|\right)^{3}\right).
\end{split}
\end{equation}

We know that $c_{-i}, i=1,2,3,$ are actually constants and with simple arithmetic calculation we can see that Theorem \ref{L(1,xP)^2} agrees with equation (\ref{nagoshk=2}) when we have the following relation 

\begin{equation*}
\begin{split}
c_{-1} &=  \frac{q^2+q+1}{(1-q) q^2} -\frac{q^2}{1-q} c_{-2} -\frac{q^4}{(1-q)^2} c_{-3}.\\
\end{split}
\end{equation*}


\subsection{The Statistic Distribution of Class Number}${\color{white}kljfhkfj}$\\

Define $F$ to be the distribution function of the random variable $L(1,\w), \w\in\Omega$, by

\begin{equation}\label{F(x)}
F(x) := \pp\left(\left\{\w\in\Omega \mid L(1,\w)\le x\right\}\right) \text{ for }x\in\mathbb{R}.
\end{equation}
Moreover, let $\widetilde{F}$ be the distribution function of the random variable $\ln L(1,\w), \w\in\Omega$, that is

\begin{equation}
\widetilde{F}(x) := \pp\left(\left\{\w\in\Omega \mid \ln L(1,\w)\le x\right\}\right) \text{ for }x\in\mathbb{R}.
\end{equation}
Its clear that $\widetilde{F}(x)=F(e^x)$ for $x\in\R$. Note that the $E\left[L(1,.)^{it}\right], t\in\R,$ is the characteristic function of $\widetilde{F}(x).$ 
\newline

In this section we give the proof of the function field analogue of Theorem \ref{nagoshiDistrb}. Our main result is:

\begin{theorem}\label{theorem3}
	Let $x\in\R$, for $n$ odd we have,
	
		\begin{equation} \label{1.7}
		\lim_{n\to \infty} \frac{1}{\left|\mathbb{P}_n\right|}\left|\left\{  P\in\mathbb{P}_n\mid h_P\le q^{-\frac{1}{2}} |P|^{\frac{1}{2}} e^x
		\right\}\right|=F(e^x)=\widetilde{F}(x),
		\end{equation}	
		and for $n$ even we have
		
		\begin{equation}\label{1.8}
		\lim_{n\to \infty} \frac{1}{\left|\mathbb{P}_n\right|}\left|\left\{P\in\mathbb{P}_n\mid  h_pR_P \le \left(q-1\right)^{-1}|P|^{\frac{1}{2}} e^x \right\}\right|=F(e^x)=\widetilde{F}(x).
		\end{equation}	
	
	Moreover the characteristic function of $F$ has the form
	
	\begin{equation}\label{powerseries}
	1+\sum_{k=1}^\infty \frac{a_k}{k!}\left(it\right)^k, \;\; t\in \R,
	\end{equation}
	where the numbers $a_k$ are as in equation (\ref{ak}). The characteristic function $E\left[L(1,.)^{it}\right]$ of $\widetilde{F}$ has the form
	
	\begin{equation}\label{prod}
	\prod_{\substack{P \text{ monic} \\ \text{irreducible}}} \left(\frac{1}{2} \left(1-\frac{1}{|P|}\right)^{-it} + \frac{1}{2}\left(1+\frac{1}{|P|}\right)^{-it}\right); \;\; t\in\R,
	\end{equation}
	and satisfies
	
	\begin{equation*}
	E\left[L(1,.)^{it}\right] \ll \exp\left(-c \frac{|t|}{\ln\left(2+|t|\right)}\right) \;\; \text{ for all } t\in\R
	\end{equation*}
	with absolute constant $c>0.$ The distribution function $\widetilde{F}$ has a density $f$. Further, $\widetilde{F}$ and $f$ are infinitely differentiable.
\end{theorem}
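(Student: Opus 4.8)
The plan is to run the method of moments against the random Euler product $L(1,\w)$ and to read off the analytic properties of the limiting law from its characteristic function, and then to transfer everything to class numbers via the formula (\ref{hP}). The single engine is Theorem \ref{nagoshis theorem}. First I would note that every $P\in\mathbb{P}_n$ has $\lo|P|=n$, and that by the Prime Polynomial Theorem \ref{PNT} one has $n|\mathbb{P}_n|=|P|+O(|P|^{1/2})$ with $|P|=q^n$. Dividing the moment asymptotic by $n|\mathbb{P}_n|$ then gives, for each fixed $k\in\N$,
\begin{equation*}
\frac{1}{|\mathbb{P}_n|}\sum_{P\in\mathbb{P}_n}L(1,\x_P)^k=a_k+O\!\left(|P|^{-1/2}(\lo|P|)^{k+1}\right)\longrightarrow a_k
\end{equation*}
as $n\to\infty$. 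By Lemma \ref{E=dk} the limit $a_k$ is exactly $E[L(1,\cdot)^k]$, so the empirical moments of $L(1,\x_P)$ converge to those of the random model. (For $n$ even one invokes the analogue of the approximate functional equation and the monic case of the class-number formula; the limiting constants are the same $a_k$.)

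Next I would establish that the law $F$ of $L(1,\w)$ is determined by these moments and identify the two characteristic functions. Estimating the Euler factors $E[L_P(1,\cdot)^k]=\tfrac12(1-|P|^{-1})^{-k}+\tfrac12(1+|P|^{-1})^{-k}$ as in Lemma \ref{E=EP}, the small-norm factors dominate and contribute at most $\exp(O(k\ln_2 k))$, whence $a_k^{1/k}=o(k)$ and $\sum_k a_k|t|^k/k!<\infty$ for every $t$. Since $L(1,\w)\ge 0$, monotone convergence gives $E[e^{|t|L(1,\cdot)}]<\infty$ and therefore
\begin{equation*}
E\!\left[e^{itL(1,\cdot)}\right]=1+\sum_{k=1}^\infty\frac{a_k}{k!}(it)^k,
\end{equation*}
which is entire; this is (\ref{powerseries}), and analyticity forces the moment problem for $F$ to be determinate. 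For the characteristic function of $\widetilde F$ I would factor over primes exactly as in Lemma \ref{E=EP}: by independence of the $W_P$ and $W_P=\pm1$ with probability $1/2$,
\begin{equation*}
E\!\left[L(1,\cdot)^{it}\right]=\prod_{P}E\!\left[(1-W_P/|P|)^{-it}\right]=\prod_{P}\left(\tfrac12(1-|P|^{-1})^{-it}+\tfrac12(1+|P|^{-1})^{-it}\right),
\end{equation*}
which is (\ref{prod}).

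The decay bound is the heart of the matter and the step I expect to be the main obstacle. Writing each factor as $\tfrac12 e^{i\theta_P}+\tfrac12 e^{i\phi_P}$ with $\theta_P=-t\ln(1-|P|^{-1})$ and $\phi_P=-t\ln(1+|P|^{-1})$, its modulus is $\bigl|\cos\bigl(\tfrac{t}{2}\ln\tfrac{|P|+1}{|P|-1}\bigr)\bigr|$, so that
\begin{equation*}
\left|E\!\left[L(1,\cdot)^{it}\right]\right|=\prod_{P}\left|\cos\!\left(\tfrac{t}{2}\ln\tfrac{|P|+1}{|P|-1}\right)\right|.
\end{equation*}
For primes whose norm lies in a window around $|t|$ the argument is bounded away from $0$ and from $\pi\Z$, so each such factor is $\le 1-c_0$; by Theorem \ref{PNT} the number of such primes has order $|t|/\ln(2+|t|)$, and keeping only these produces the claimed $\exp(-c|t|/\ln(2+|t|))$. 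The delicate point will be selecting the window and controlling the small-norm primes, where the cosine oscillates, so that the estimate does not lose the gain. Granting this, $E[L(1,\cdot)^{it}]$ and all its $t$-power multiples are integrable, so Fourier inversion endows $\widetilde F$ with a density $f$, and differentiation under the integral sign shows $\widetilde F,f\in C^\infty$; in particular $F(x)=\widetilde F(\ln x)$ is continuous on $(0,\infty)$.

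Finally I would assemble the limit laws. Moment convergence together with the determinacy of $F$ yields, by the method of moments, weak convergence of the empirical distributions of $L(1,\x_P)$ to $F$; since $F$ is continuous this is convergence at every point, so $|\mathbb{P}_n|^{-1}\#\{P\in\mathbb{P}_n:L(1,\x_P)\le y\}\to F(y)$ for all $y>0$. The class-number formula (\ref{hP}) then converts this into the theorem: for $n$ odd, $h_P=q^{-1/2}|P|^{1/2}L(1,\x_P)$, so $h_P\le q^{-1/2}|P|^{1/2}e^x$ is equivalent to $L(1,\x_P)\le e^x$, giving (\ref{1.7}) with limit $F(e^x)=\widetilde F(x)$; for $n$ even the monic case gives $h_PR_P=(q-1)^{-1}|P|^{1/2}L(1,\x_P)$, and the same substitution yields (\ref{1.8}).
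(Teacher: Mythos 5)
Your proposal is correct and follows essentially the same route as the paper's proof: normalizing Theorem \ref{nagoshis theorem} by $|\mathbb{P}_n|\sim |P|/\log|P|$, identifying $a_k=E[L(1,\cdot)^k]$ via Lemma \ref{E=dk}, deducing moment-determinacy from $a_k\ll e^{ck\ln\ln k}$ (the paper cites Billingsley and Barban where you use the entire characteristic function, an equivalent argument), factoring $E[L(1,\cdot)^{\i t}]$ over primes by independence, proving the decay bound by restricting to primes with $|P|\asymp|t|$, applying Fourier inversion for the density and smoothness, and finally transferring to $h_P$ through (\ref{hP}). The one step you flag as delicate --- controlling the small-norm primes in the decay estimate --- is in fact a non-issue: each Euler factor has modulus at most $1$ (it equals $\left|\cos\left(\frac{t}{2}\ln\frac{|P|+1}{|P|-1}\right)\right|$, as you compute), so one simply discards every factor outside the window $c_q|t|\le |P|\le 2c_q|t|$ and keeps only the $\asymp |t|/\ln|t|$ primes inside it, which is exactly what the paper does via the trivial bound $\left|E\left[L_P(1,\cdot)^{\i t}\right]\right|\le 1$.
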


We first prove the following auxiliary lemma.

{
\begin{lemma}\label{Barban lemma 5.8}
We have the following estimate, 

\begin{equation*}
\sum_{f \text{ monic}} \frac{d_k(f^2)}{|f|^2} \ll c^{k\ln\ln k},
\end{equation*}
where $c$ is an absolute constant depend on $q$.
\end{lemma}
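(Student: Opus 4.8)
The plan is to collapse the sum into an Euler product, take logarithms, and split the primes according to whether $|P|\le k$ or $|P|>k$; the factor $c^{k\ln\ln k}$ will come from the small primes through Mertens' theorem (Lemma~\ref{Mertens}), while the large primes contribute only $O(k)$. First recall that the quantity to be bounded is exactly $a_k=\sum_{f\text{ monic}}d_k(f^2)/|f|^2$ computed in Lemma~\ref{E=dk}, and that the Euler factorization carried out in the proof of Lemma~\ref{Dlk}, evaluated at $s=2$, gives the closed form
\[
a_k=\prod_{\substack{P\text{ monic}\\ \text{irreducible}}}E_P(k),\qquad E_P(k):=\frac12\left(\left(1-\frac1{|P|}\right)^{-k}+\left(1+\frac1{|P|}\right)^{-k}\right).
\]
Since $(1-|P|^{-1})^{-k}(1+|P|^{-1})^{-k}=(1-|P|^{-2})^{-k}\ge1$, the arithmetic--geometric inequality gives $E_P(k)\ge1$, so taking logarithms yields $\ln a_k=\sum_P\ln E_P(k)$ with nonnegative summands, and it suffices to prove $\sum_P\ln E_P(k)\le k\ln\ln k+O(k)$.

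For the large primes $|P|>k$ I would retain the cancellation coming from the fact that $E_P(k)$ is the \emph{even} part of $(1-|P|^{-1})^{-k}$. Writing $x=1/|P|$ and factoring $E_P(k)=\exp\!\big({-\tfrac k2\ln(1-x^2)}\big)\cosh\!\big(\tfrac k2\ln\tfrac{1+x}{1-x}\big)$, then using $\ln\cosh u\le u^2/2$, one finds $\ln E_P(k)\ll k^2/|P|^2$ uniformly for $|P|>k$ (the condition $kx<1$ keeps every quantity bounded). Summing with the Prime Polynomial Theorem~\ref{PNT},
\[
\sum_{|P|>k}\ln E_P(k)\ll k^2\sum_{\deg P>\log_q k}\frac1{|P|^2}\ll k^2\sum_{n>\log_q k}\frac1{n\,q^{n}}\ll \frac{k}{\log_q k},
\]
which is $O(k)$ and hence negligible. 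It is essential to use this refined bound, since the crude estimate $\ln E_P(k)\le -k\ln(1-1/|P|)$ would only give a divergent $\sum_P k/|P|$.

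For the small primes $|P|\le k$, i.e. $\deg P\le\log_q k$, I would drop the cancellation: because $(1+x)^{-k}\le(1-x)^{-k}$ we have $E_P(k)\le(1-1/|P|)^{-k}$, whence $\ln E_P(k)\le -k\ln(1-1/|P|)$. Summing and applying Mertens' theorem (Lemma~\ref{Mertens}) with degree bound $X=\lfloor\log_q k\rfloor$,
\[
\sum_{|P|\le k}\ln E_P(k)\le k\!\!\sum_{\deg P\le\log_q k}\!\!\ln\left(1-\frac1{|P|}\right)^{-1}=k\,\ln\!\big(e^{\gamma}\log_q k+O(1)\big)=k\big(\ln\ln k+O(1)\big),
\]
using $\ln(\log_q k)=\ln\ln k-\ln\ln q=\ln\ln k+O(1)$. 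Combining the two ranges gives $\ln a_k\le k\ln\ln k+O(k)$, and since $\ln\ln k\to\infty$ the error is absorbed for $k$ large, so $a_k\le\exp(2k\ln\ln k)$ eventually; enlarging the constant to cover the finitely many remaining values of $k$ gives $a_k\ll c^{k\ln\ln k}$ with $c$ depending only on $q$ (through the implied constants and $\gamma,\ln\ln q$). The hard part is the large-prime range: one must exploit the square $f^2$ (equivalently, the even part of the local Euler factor) to gain the extra $|P|^{-2}$ that makes the tail summable, whereas the value $\ln\ln k$ itself is produced entirely by Mertens' theorem summed up to degree $\log_q k$.
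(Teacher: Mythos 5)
Your proof is correct and follows essentially the same route as the paper's: bound the Euler product for $a_k$ by splitting at $|P|\le k$ versus $|P|>k$, extract the factor $e^{k(\ln\ln k+O(1))}$ from the small primes via Mertens' theorem (Lemma~\ref{Mertens}), and show the large primes contribute only $\exp(O(k))$ by gaining $k^2/|P|^2$ per factor. The only cosmetic difference is that you obtain the large-prime bound from the exact even-part identity and $\ln\cosh u\le u^2/2$, while the paper uses the coefficient-wise estimate $\sum_{n\ge1}d_k(P^{2n})|P|^{-2n}<\frac{k^2}{|P|^2}\left(1-|P|^{-1}\right)^{-k}$ to the same effect.
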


\begin{proof}
	Clearly,
	
	\begin{equation*}
	1+\frac{d_k(P^2)}{|P|^2} +\frac{d_k(P^4)}{|P|^4} + \cdots < 1+ \frac{k^2}{|P|^2} \frac{1}{\left(1-|P|^{-1}\right)^k}.
	\end{equation*}
	So we have that,
	
	\begin{equation*}
	\begin{split}
	\prod_{\substack{P \text{ prime} \\ |P|\le k}} \left(1+ \frac{k^2}{|P|^2} \frac{1}{\left(1-|P|^{-1}\right)^k}\right) &< k^{c\pi(\log k)} \left( \prod_{\substack{P \text{ prime} \\ |P|\le k}} \frac{1}{1-|P|^{-1}} \right)^k\\
	& < c^{k\ln\ln k}
	\end{split}
	\end{equation*}
	and 
	
		\begin{equation*}
	\begin{split}
	\prod_{\substack{P \text{ prime} \\ |P|> k}} \left(1+ \frac{k^2}{|P|^2} \frac{1}{\left(1-|P|^{-1}\right)^k}\right) &<  \prod_{\substack{P \text{ prime} \\ |P|> k}} \left(1+ \frac{ck^2}{1-|P|^{-2}} \right)< c^k.
	\end{split}
	\end{equation*}
	By using the Euler product we complete the proof.
\end{proof}}

We are now in a position to present the proof of the main result in this section.

\begin{proof}[Proof of Theorem \ref{theorem3}]
Now, Theorem \ref{nagoshis theorem} and simple arithmetic manipulation yield that, for $k\in\mathbb{N},$

\begin{equation}\label{1}
\sum_{P\in \mathbb{P}_n} L\left(1,\x_P\right)^k \sim \frac{a_k|P|}{\lo|P|} \text{  as } n\to\infty,
\end{equation}
since 

\begin{equation}\label{2}
 a_k \ll e^{c_1 k \log\log k},
\end{equation}
(see Lemma \ref{Barban lemma 5.8} above), where we can see that the power series $1+\sum_{k=1}^\infty a_kw^k/k!$ has infinite radius of convergence. 
From Lemma \ref{E=dk}, Theorem 30.1 in \cite{Billingsley} and Lemma 5.7 in \cite{Barban}, we can deduce that $F$ defined in (\ref{F(x)}) is the unique distribution function with the moments $a_1,a_2,...$. Therefore, since $\#\mathbb{P}_n\sim|P|/\log|P|$ as $n\to\infty$, it follows from the method of moments, Theorem 30.2 in \cite{Billingsley} and (\ref{1}) that 

\begin{equation}\label{5.3}
\lim_{n\to\infty}\frac{1}{\left|\mathbb{P}_n\right|} \left|\left\{P\in\mathbb{P}_n \mid L(1,\x_P)\le y\right\}\right|= F(y)
\end{equation}
for each $y\in \R$ at which $F$ is continuous. Moreover, we obtain equation (\ref{powerseries}) from (\ref{2}) and [\cite{Billingsley},(26.7)] or [\cite{Barban},Lemma 5.7]. 

Now, for any fixed $t\in\R$, by the independence of $W_P$'s (see (26.12) in \cite{Billingsley}) we have

\begin{equation*}
\prod_{\substack{P \text{ irreducible} \\ \deg P\le n}} E\left[L_P(1,.)^{\i t}\right]= E\left[\prod_{\substack{P \text{ irreducible} \\ \deg P\le n}} L_P(1,.)^{\i t}\right].
\end{equation*}
Note that 

\begin{equation*}
\prod_{\substack{\deg P\le n}} E\left[L_P(1,\w)^{\i t}\right]\to E\left[L(1,\w)^{\i t}\right],
\end{equation*}
as $n\to\infty$ for almost all $\w\in\Omega$, and that  

\begin{equation*}
\begin{split}
\prod_{ \deg P\le n}& E\left[L_P(1,\w)^{\i t}\right]\\ 
& = \cos\left( t\log\prod_{ \deg P\le n} L_P(1,\w)\right)+ \i \sin\left( t\log\prod_{ \deg P\le n} L_P(1,\w)\right),
\end{split}
\end{equation*}
where $\cos(\cdot)$ and $\sin(\cdot)$ above are bounded uniformly for all $\w$ and $n$, therefore we deduce from Lebesgue's dominated convergent Theorem that 

\begin{equation}\label{4}
\prod_{\substack{P \text{ irreducible} \\ \deg P\le n}} E\left[L_P(1,\cdot)^{\i t}\right] \to E\left[ L(1,\cdot)^{\i t}\right] 
\end{equation}
as $n\to \infty$ for any fixed $t\in\R$. Recall the Taylor's series $\left(1+x\right)^k=\sum_{m=0}^\infty \binom{\alpha}{m}x^m,$ since we have for $|t|/|P|$ small

\begin{equation}\label{5}
\begin{split}
E\left[L_P(1,\cdot)^{\i t}\right] &= \frac{1}{2} \left(1-\frac{1}{|P|}\right)^{-\i t} + \frac{1}{2} \left(1+\frac{1}{|P|}\right)^{-\i t}\\
& = 1-\frac{t^2-\i t}{2 |P|^2} +O\left( \frac{|t|^3+ |t|^2+|t|}{|P|^3}\right)
\end{split}
\end{equation} 
where the infinite product $\prod_{P} E\left[L_P(1,\cdot)^{\i t}\right]$ converges absolutely and uniformly for $t$ in any compact subset of $\R.$ Hence, making use of (\ref{4}), we obtain (\ref{prod}) and that $E\left[L(1,\cdot)^{\i t}\right]$ is a continuous function on $\R$. 

Let $c_q\ge q>4$ be a positive constant depending on $q$. If $|t|>c_1$ and $|P|\ge c_q|t|$, then we obtain from (\ref{5}) that 

\begin{equation*}
\left|E\left[L_P(1,\cdot)^{\i t}\right]\right| \le 1-\frac{t^2}{2|P|^2}.
\end{equation*} 
Since $\left|E\left[L_P(1,\cdot)^{\i t}\right]\right| \le 1,$ we have that for any real numbers $q\le y_1<y_2$

\begin{equation*}
E\left[L(1,\cdot)^{\i t}\right] \le \prod_{\substack{y_1\le |P| \le y_2}} E\left[L_P(1,\cdot)^{\i t}\right].
\end{equation*}
By choosing $y_1=c_q|t|$ and $y_2=2c_q|t|$ we have for any $t\in\R$ with $|t|$ large

\begin{equation*}
\begin{split}
\left|E\left[L(1,\cdot)^{\i t}\right]\right| & \le \prod_{\substack{\lo c_q|t|\le \deg(P) \le \lo 2c_q|t|}} E\left[L_P(1,\cdot)^{\i t}\right]\\
& \le \exp\left(-t^2\sum_{r=\log c_q|t|}^{ \lo 2c_q|t|} \frac{1}{r q^r} \right)\\
& \le \exp\left(-\widetilde{c_{q}}\frac{|t|}{ \log|t| }\right). 
\end{split}
\end{equation*}   

Then, by the continuity of $E\left[L\left(1,\cdot\right)^{\i t}\right]$ we have

\begin{equation}\label{E bound}
E\left[L\left(1,\cdot\right)^{\i t}\right] \ll \exp\left(-\widetilde{c_{q}}\frac{|t|}{ \log\left(2+|t|\right) }\right)
\end{equation}
for all $t\in\R,$ which gives $\int_{-\infty}^{\infty} \left| E\left[L\left(1,\cdot\right)^{\i t}\right] \right|  <0$. Therefore, using the inversion formula (Theorem 26.2 in \cite{Billingsley}) we have that $\widetilde{F}$ has a density $f$. Moreover, using similar reasoning as presented in \cite[pg.344-347]{Billingsley} and by making use of equation (\ref{E bound}) we can concluded that the density $f$ and the function $\widetilde{F}$ is differentiable on $\R$. In particular, the function $F$ is continuous on $\left(0,\infty\right).$ Hence, from the above with (\ref{5.3}) and Dirichlet's class number formula we have equations (\ref{1.7}) and (\ref{1.8}).

\end{proof}


\section{Complex Moments of $L(1,\x_P)$}\label{Proof of lumley theorem}

In this part we investigate the complex moments of $L(1,\x_P)$, where $\x_P$ varies over quadratic characters associated to irreducible polynomials $P$ of degree $n$ over $\mathbb{F}_{q}$, in a large uniform range. We express the complex moments of $L(1,\x_P)$ as follows. 

\begin{theorem}\label{lumley theorem}
	Let $n$ be positive integer, and let $z\in\C$ such that $|z|\le \frac{\log |P|}{260 \log_2|P|\ln\log_2|P|}$. Then	
	
	\begin{equation*}
	\frac{1}{\left|\mathbb{P}_n\right|} \sum_{P\in\mathbb{P}_n}L(1,\x_P)^z =  \sum_{f \text{ monic}} \frac{d_z(f^2)}{|f|^2} \left(1+O\left(\frac{1}{\left(\log|P|\right)^{11}}\right)\right).
	\end{equation*}
\end{theorem}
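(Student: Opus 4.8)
The plan is to prove the complex-moment formula by the same moment-and-analytic-continuation strategy used for integer $k$, but carried out uniformly in the complex parameter $z$ over the stated range. First I would return to the approximate functional equation (\ref{aproxL(1,xp)k}), which was stated for integer powers $k$; the first task is to verify that it continues to hold with $d_k$ replaced by $d_z$, where $d_z(f)$ is defined via the generating identity $\bigl(\sum_{f}\chi_P(f)|f|^{-s}\bigr)^z = \sum_f d_z(f)\chi_P(f)|f|^{-s}$. Since $L(s,\chi_P)$ is a polynomial in $u=q^{-s}$ of degree $2g$, raising it to the complex power $z$ and expanding requires a truncation argument rather than the exact finite identity available for integer $k$. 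Thus I expect to work with a truncated Dirichlet series plus a tail bound, writing
\begin{equation*}
L(1,\chi_P)^z = \sum_{\substack{f \text{ monic}\\ \deg f \le y}} \frac{d_z(f)\chi_P(f)}{|f|} + (\text{error}),
\end{equation*}
where $y$ is chosen comparable to $\log|P|$ times the length dictated by the range of $z$.

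Next I would split the main sum over $P\in\mathbb{P}_n$ according to whether $f$ is a perfect square. For the square terms $f=\ell^2$ one has $\chi_P(\ell^2)=1$ whenever $(P,\ell)=1$, and summing over $P$ via the Prime Polynomial Theorem (Theorem \ref{PNT}) produces the main term $\sum_{\ell}d_z(\ell^2)|\ell|^{-2}$ after extending the $\ell$-sum to infinity; the tail of that extension is controlled by Lemma \ref{Barban lemma 5.8} together with the analytic information from Lemma \ref{Dlk}. For the non-square terms I would invoke the character-sum bound of Proposition \ref{chi}, which gains a factor $q^{n/2}/n$ and hence contributes an error of relative size $|P|^{-1/2}$ times a power of $\log|P|$; the delicate point is that the implied constant now depends on $z$ through $d_z$, so these bounds must be tracked with explicit dependence on $|z|$.

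The heart of the argument, and the step I expect to be the main obstacle, is making every estimate uniform in $z$ across the range $|z|\le \log|P| / (260\log_2|P|\ln\log_2|P|)$. The difficulty is that $d_z(f)$ can grow like $|z|^{\Omega(f)}$, so the various sums over $f$ (both the square part and the truncation tail) are no longer absolutely small by a fixed margin but instead grow with $|z|$; one must show that this growth is dominated by the savings coming from the truncation length $y$ and from the Prime Polynomial Theorem error. Concretely, I would bound $\sum_{\deg f = m}|d_z(f)|$ by $\binom{m+\lceil|z|\rceil}{m}$-type quantities and optimise $y$ so that the tail is $O((\log|P|)^{-11})$ precisely when $|z|$ stays below the stated threshold; the exotic constant $260$ and the iterated-logarithm factor in the hypothesis are exactly what emerges from balancing $(e|z|/y)^{y}$-style tail bounds against the $|P|^{-1/2}$ gain. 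This is the analogue in the function-field setting of Lumley's uniform estimate (Theorem \ref{lumleythm1}), so I would follow her method of cutting the Euler product at $|P|\le z$ and bounding the contribution of large primes separately, adapting her combinatorial lemmas on divisor sums to the $\mathbb{F}_q[T]$ setting.

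Finally, once the main term $\sum_f d_z(f^2)|f|^{-2}$ is isolated with all errors absorbed into the claimed $O\bigl((\log|P|)^{-11}\bigr)$ relative error, I would divide by $|\mathbb{P}_n|\sim |P|/\log|P|$ (again via Theorem \ref{PNT}) to obtain the stated normalisation, taking care that the $\log|P|$ from the counting function cancels correctly against the $\log|P|$ attached to the main sum in the manner already seen in Theorem \ref{nagoshis theorem}. The identity $d_z(f^2)$ appearing in the conclusion matches the integer case by analytic continuation in $z$ of the Euler-product computation performed in Lemma \ref{Dlk}, so no separate evaluation of the main term is needed beyond confirming that the Dirichlet series $\sum_f d_z(f^2)|f|^{-2}$ converges throughout the relevant $z$-range, which is guaranteed by Lemma \ref{Barban lemma 5.8}.
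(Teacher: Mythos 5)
Your plan hinges on extending the approximate functional equation (\ref{aproxL(1,xp)k}) to complex $z$ by truncating the Dirichlet series of $L(1,\chi_P)^z$ at some length $y$ and controlling the tail by combinatorial bounds on $d_z$, ``$(e|z|/y)^{y}$-style''. This step fails as stated, and it is the heart of the matter. The exact identity for integer $k$ comes from $L(s,\chi_P)^k$ being a polynomial of degree $2kg$ in $u=q^{-s}$ with a functional equation; for $z\notin\mathbb{N}$ there is no polynomial and no functional equation, so the tail $\sum_{\deg f>y} d_z(f)\chi_P(f)q^{-\deg f}$ must be estimated analytically. But this tail does not converge absolutely: in $\mathbb{F}_q[T]$ one has exactly $\sum_{\deg f=m} d_k(f)=q^m\binom{m+k-1}{k-1}$, so $\sum_{\deg f=m}|d_z(f)|q^{-m}\asymp m^{\lceil|z|\rceil-1}$ grows with $m$, and no choice of $y$ makes the absolute tail small. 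Convergence at $s=1$ is conditional, resting on the cancellation encoded in the zeros of $L(u,\chi_P)$ on $|u|=q^{-1/2}$; and the $|P|^{1/2}$ saving from Proposition \ref{chi} that you invoke is only available \emph{after} summing over $P\in\mathbb{P}_n$ (for non-square $f$), so it cannot rescue the truncation of the series for an individual $P$. To control the single-$P$ tail you would need either a Cauchy-estimate/contour argument inside the zero-free disc $|u|<q^{-1/2}$ (Weil's Riemann Hypothesis), which you do not propose, or the route the paper actually takes.

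The paper's engine is different and you only gesture at it in your closing remark about ``cutting the Euler product'', which is inconsistent with your functional-equation plan and never developed. The paper first proves Lemma \ref{lnL(s,x)}: using Proposition \ref{chi} on prime sums, $\ln L(1,\chi_P)$ equals the sum over primes of degree at most $M=N\log_2|P|$ up to an error $O\bigl(q^{-M/2}\deg(P)/M\bigr)$ --- a truncation at primes of degree about $\log\log$ of the conductor, far shorter than any Dirichlet-series cutoff. Exponentiating, the error contributes $\exp\bigl(O(|z|q^{-M/2}\log|P|/M)\bigr)=1+O\bigl((\log|P|)^{-B}\bigr)$ precisely because $|z|$ is restricted as in the hypothesis; this is where the range of $z$ and the constant enter, not from balancing tail bounds against $|P|^{-1/2}$. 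The resulting product over small primes is expanded into a series supported on $M$-smooth $f$ and truncated to $|f|\le|P|^{1/3}$ by Rankin's trick (Lemma \ref{Lz}), and only then does the averaging you describe take place: the square/non-square split, the Prime Polynomial Theorem for $S_1$, the Weil bound for $S_2\ll|P|^{1/2}(\log|P|)^k$, and the identification of the main term with the random-model expectation $E\left[L(1,\cdot)^z\right]=\sum_f d_z(f^2)/|f|^2$ (Lemmas \ref{E=dk} and \ref{L1z}, with $N=26$, $B=11$). Your final steps coincide with the paper's, but without Lemma \ref{lnL(s,x)} and the smooth-support Rankin truncation your uniformity analysis in $z$ has no foundation, and the proposal as written does not yield the theorem.
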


An applications of the above Theorem and Artin's class number formula over function fields (\ref{hP}) we obtain some corollaries for the average size of the class number $h_P$ over $\mathbb{P}_n$ when we specialize $n$ to be $n=2g+1$ and $n=2g+2$ and letting the genus $g\to \infty$.

\begin{corollary}\label{lumley coral1}
	Let $z\in\C$ such that $|z|\le \frac{g}{130 \log(g)\ln\log(g)}$. Then	
	
	\begin{equation*}
	\frac{1}{\left|\p\right|} \sum_{P\in\p} h_P^z = q^{gz} \sum_{f \text{ monic}} \frac{d_z(f^2)}{|f|^2} \left(1+O\left(\frac{1}{g^{11}}\right)\right).
	\end{equation*}
\end{corollary}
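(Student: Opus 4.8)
The plan is to deduce Corollary \ref{lumley coral1} directly from Theorem \ref{lumley theorem} by specializing $n=2g+1$ and substituting the Artin class number formula (\ref{hP}). The logical chain is short: the main theorem gives the complex moments of $L(1,\x_P)$, and the class number formula converts these into moments of $h_P$, so the proof is essentially a change of variables together with a check that the admissible range of $z$ is preserved.

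First I would set $n=2g+1$, so that $|P|=q^{2g+1}$ and hence $\log|P|=2g+1$. For $P\in\mathbb{P}_{2g+1}$ the degree is odd, so the first branch of (\ref{hP}) applies, giving
\begin{equation*}
L(1,\x_P)=\sqrt{q}\,|P|^{-\frac12}h_P,
\end{equation*}
and therefore
\begin{equation*}
L(1,\x_P)^z=q^{z/2}|P|^{-z/2}h_P^z=q^{z/2}q^{-(2g+1)z/2}h_P^z=q^{-gz}h_P^z.
\end{equation*}
Substituting this into Theorem \ref{lumley theorem} and multiplying both sides by $q^{gz}$ yields
\begin{equation*}
\frac{1}{\left|\p\right|}\sum_{P\in\p}h_P^z=q^{gz}\sum_{f\text{ monic}}\frac{d_z(f^2)}{|f|^2}\left(1+O\left(\frac{1}{(\log|P|)^{11}}\right)\right),
\end{equation*}
which is the stated formula once the error term is rewritten. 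Since $\log|P|=2g+1\asymp g$, we have $(\log|P|)^{11}\asymp g^{11}$, so the error term $O\bigl((\log|P|)^{-11}\bigr)$ becomes $O(g^{-11})$, as claimed.

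Second I would verify the hypothesis on $z$. Theorem \ref{lumley theorem} requires $|z|\le \frac{\log|P|}{260\log_2|P|\ln\log_2|P|}$. With $\log|P|=2g+1$ and $\log_2|P|=\log(2g+1)$, the right-hand side is, up to the leading behavior as $g\to\infty$, equal to $\frac{2g+1}{260\log(2g+1)\ln\log(2g+1)}$, which is asymptotically at least $\frac{g}{130\log(g)\ln\log(g)}$. Hence the hypothesis $|z|\le \frac{g}{130\log(g)\ln\log(g)}$ stated in the corollary falls within the range permitted by the theorem for all sufficiently large $g$, so the substitution is legitimate. I would remark that $d_z(f^2)$ denotes the natural extension of the $k$-fold divisor function to complex exponent $z$, consistent with its appearance in Theorem \ref{lumley theorem}.

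The only genuine point requiring care — rather than a true obstacle — is the bookkeeping between the two parametrizations of the admissible range, namely confirming that $\frac{g}{130\log(g)\ln\log(g)}\le\frac{\log|P|}{260\log_2|P|\ln\log_2|P|}$ holds once $n=2g+1$; the factor of $2$ between $130$ and $260$ is precisely what absorbs the relation $\log|P|=2g+1$. Everything else is the direct algebraic substitution above.
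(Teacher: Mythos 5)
Your proposal is correct and follows essentially the same route as the paper, which deduces the corollary in one line from Theorem \ref{lumley theorem} together with the class number formula (\ref{hP}), specializing $n=2g+1$ so that $L(1,\x_P)^z=q^{-gz}h_P^z$ and $(\log|P|)^{11}\asymp g^{11}$. Your extra check of the admissible range of $z$ is more careful than the paper's (which omits it entirely); the only caveat is that, strictly, $\frac{g}{130\log(g)\ln\log(g)}$ slightly exceeds $\frac{2g+1}{260\log(2g+1)\ln\log(2g+1)}$ because $\log(2g+1)>\log(g)$, a harmless factor $1+O(1/\log g)$ of slack inherited from the paper's own (and Lumley's) statement, absorbable into the constants exactly as you indicate.
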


\begin{corollary}\label{lumley coral2}
	Let $z\in\C$ such that $|z|\le \frac{g}{130 \log(g)\ln\log(g)}$. Then	
	
	\begin{equation*}
	\frac{1}{\left|\p\right|} \sum_{P\in\mathbb{P}_{2g+2}} \left(h_P R_P\right)^z = \left(\frac{q^{g+1}}{q-1}\right)^z \sum_{f \text{ monic}} \frac{d_z(f^2)}{|f|^2} \left(1+O\left(\frac{1}{g^{11}}\right)\right).
	\end{equation*}
\end{corollary}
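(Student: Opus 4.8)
The plan is to adapt Lumley's argument (Theorem \ref{lumleythm1}) to the family $\mathbb{P}_n$ of monic irreducibles of degree $n$ in place of the square-free polynomials $\mathbb{H}_n$. Throughout, extend $d_z$ to complex $z$ through the Euler factor $(1-u)^{-z}=\sum_{r\ge 0} d_z(P^r)\,u^r$, so that $d_z(P^r)=\binom{z+r-1}{r}$ and $d_z$ is multiplicative; with this convention the target right-hand side $\sum_{f \text{ monic}} d_z(f^2)/|f|^2$ is exactly the random-model expectation $E\left[L(1,\cdot)^z\right]$ by (the complex-$z$ extension of) Lemma \ref{E=dk}. Thus the goal is to show that the arithmetic average $|\mathbb{P}_n|^{-1}\sum_{P\in\mathbb{P}_n} L(1,\x_P)^z$ agrees with this expectation up to a relative error $O\big((\log|P|)^{-11}\big)$, uniformly for $z$ in the stated disc.

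First I would replace $L(1,\x_P)^z$ by a truncated Euler product $\prod_{\deg P'\le y}\left(1-\x_P(P')/|P'|\right)^{-z}$, with a cutoff $y$ tuned to the size of $|z|$. The point is that for all but a small exceptional set of $P\in\mathbb{P}_n$ the tail of $\log L(1,\x_P)$ beyond degree $y$ is negligible, so $L(1,\x_P)^z$ is close to its truncation; on the exceptional set, which has small relative density, crude bounds on $|L(1,\x_P)^z|$ suffice. I would then expand the truncated product as a Dirichlet polynomial $\sum d_z(f)\x_P(f)/|f|$, supported on $f$ whose prime factors have degree $\le y$, and further truncate to $\deg f\le Y$ with $Y$ a fixed power of $\log|P|$, discarding the long part via the same distributional control. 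This step is where the range condition begins to bite: $Y$ and $y$ must be kept small enough that the subsequent character-sum savings dominate.

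Next I would average the truncated polynomial over $P\in\mathbb{P}_n$ and split the $f$-sum into perfect squares and non-squares. For $f=l^2$ one has $\x_P(l^2)=1$ whenever $P\nmid l$, so after dividing by $|\mathbb{P}_n|$ and using the Prime Polynomial Theorem \ref{PNT} these terms produce the main term $\sum_{\deg l\le Y/2} d_z(l^2)/|l|^2$, up to a harmless correction from the few $l$ divisible by $P$. For $f\neq\Box$ I would invoke the character-sum bound of Proposition \ref{chi} and Lemma \ref{NkBox}, namely $\big|\sum_{P\in\mathbb{P}_n}\x_P(f)\big|\ll (q^{n/2}/n)\deg f$; summing $|d_z(f)|\,\deg f/|f|$ against this over $\deg f\le Y$ and dividing by $|\mathbb{P}_n|\sim q^n/n$ gives a non-square error of order $q^{-n/2}\,Y^{|z|+1}/\Gamma(|z|+1)$. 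Requiring this to be $O\big((\log|P|)^{-11}\big)$ forces $(|z|+1)\lo Y \lesssim n/2$, and with $\lo Y\asymp \log_2|P|$ this is precisely the constraint $|z|\le \frac{\log|P|}{260\,\log_2|P|\,\ln\log_2|P|}$. Finally I would complete the square sum to $\sum_{l}d_z(l^2)/|l|^2$, bounding the tail $\sum_{\deg l>Y/2}|d_z(l^2)|/|l|^2$ by dominating $|d_z(f)|\le d_{|z|}(f)$ (from $\big|\binom{z+r-1}{r}\big|\le\binom{|z|+r-1}{r}$) and applying the growth estimate $\sum_f d_{|z|}(f^2)/|f|^2\ll c^{|z|\ln\ln|z|}$ of Lemma \ref{Barban lemma 5.8}.

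The main obstacle is balancing the three competing errors—the Euler-product truncation, the non-square character sum, and the tail of the square sum—simultaneously and uniformly in complex $z$. The delicate points are the large-deviation step controlling the truncation for most $P$, and the uniform control of $d_z(f)$ for complex $z$ via the binomial bound $|d_z(f)|\le d_{|z|}(f)$ together with Lemma \ref{Barban lemma 5.8}; it is the interplay of these that pins down the exact range of $z$ and the constant $260$, exactly as in the square-free case treated by Lumley.
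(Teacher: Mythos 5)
Your sketch of the moment computation for $L(1,\x_P)^z$ over $\mathbb{P}_n$ follows the paper's proof of Theorem \ref{lumley theorem} quite closely: the same truncated Euler product (Lemma \ref{lnL(s,x)}), the same Rankin-trick truncation of the resulting Dirichlet polynomial (Lemma \ref{Lz}), the same split into square and non-square $f$ with the Prime Polynomial Theorem \ref{PNT} producing the main term and Proposition \ref{chi} killing the non-squares, and the same matching of the square contribution with the random-model expectation $E\left[L(1,\cdot)^z\right]$. One minor deviation: you propose a large-deviation treatment of the Euler-product truncation, valid ``for all but a small exceptional set of $P$''; over function fields this detour is unnecessary, since Weil's Riemann Hypothesis, entering through Proposition \ref{chi}, makes the truncation in Lemma \ref{lnL(s,x)} pointwise for \emph{every} $P$ with error $O\left(q^{-M/2}\deg(P)/M\right)$ --- this is what the paper does, and it yields the stated uniformity directly without any exceptional set.

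However, as written your proposal proves the wrong statement: it terminates with the asymptotic for $\frac{1}{|\mathbb{P}_n|}\sum_{P\in\mathbb{P}_n} L(1,\x_P)^z$, i.e.\ with Theorem \ref{lumley theorem}, and never deduces the corollary, which concerns $(h_P R_P)^z$ for $P\in\mathbb{P}_{2g+2}$. The entire content of the corollary, given the theorem, is Artin's class number formula (\ref{hP}): for $P$ monic of even degree $2g+2$ (so that $\mathrm{sgn}(P)=1$) one has $L(1,\x_P)=(q-1)|P|^{-1/2}h_P R_P$ with $|P|^{1/2}=q^{g+1}$, hence $(h_P R_P)^z=\left(\frac{q^{g+1}}{q-1}\right)^z L(1,\x_P)^z$; this constant is independent of $P$, so it pulls out of the average and produces the factor $\left(\frac{q^{g+1}}{q-1}\right)^z$ on the right-hand side. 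One must also translate the admissible range: taking $n=2g+2$ gives $\log|P|=2g+2$, so the theorem's condition $|z|\le \frac{\log|P|}{260\log_2|P|\ln\log_2|P|}$ becomes essentially $|z|\le \frac{g}{130\log(g)\ln\log(g)}$. Note too that it is precisely the even-degree, $\mathrm{sgn}(P)=1$ case in which the regulator $R_P$ appears; for odd degree the formula yields $h_P$ alone, which is Corollary \ref{lumley coral1}. None of $h_P$, $R_P$, equation (\ref{hP}), the sign condition, or the range translation appears anywhere in your proposal, so it does not establish the stated corollary; the fix is the short deduction just described, which is exactly how the paper obtains the corollary from Theorem \ref{lumley theorem}.
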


Let $P\in\mathbb{P}_n$, $z\in\C$ such that $|z|\ll\log|P|/\left(\log_2|P|\ln\log_2|P|\right).$ Let  $Q$ represent an irreducible polynomial and define the generalized divisor function $d_z(f)$ on its prime powers as

\begin{equation*}
d_z(Q^a)=\frac{\Gamma(z+a)}{\Gamma(z)a!},
\end{equation*}
and extend it to all monic polynomials multiplicatively. We will prove the following lemmas which allow us to connect the complex moments of the random model to the complex moments of $L(1,\x_P).$

\begin{lemma}\label{l1}
	Let $P\in \mathbb{P}_n, N>4$ be fixed constant and $z\in\C$ such that $|z|\le \frac{\log|P|}{10 N \log_2|P|\ln\log_2|P|} $ and $M=N \log_2|P|$. Then 	
	\begin{equation*}
	L(1,\x_P)^z=\left(1+ O\left(\frac{1}{\left(\log|P|\right)^B}\right)\right) \sum_{\substack{f \text{ monic} \\ |f|\le|Q|^{1/3} \\ Q\mid f \Rightarrow \deg(Q)\le M}} \frac{\x_P(f)d_z(f)}{|f|},
	\end{equation*}
	where $B= N/2-2$.
\end{lemma}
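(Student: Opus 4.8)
The goal of Lemma~\ref{l1} is to replace the full $L$-function $L(1,\x_P)^z$ with a short Dirichlet polynomial that is supported on smooth polynomials (those all of whose irreducible factors have degree at most $M = N\log_2|P|$) of bounded degree. The plan is to carry this out in two independent truncation steps, controlling the error introduced by each.

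First I would replace the Euler product for $L(1,\x_P)^z = \prod_Q (1 - \x_P(Q)/|Q|)^{-z}$ by its truncation over irreducibles $Q$ with $\deg(Q) \le M$, thereby restricting to the smooth part. The tail consists of the factors with $\deg(Q) > M$, and since $\log L_Q(1,\x_P)^z = -z\log(1 - \x_P(Q)/|Q|) = z\,\x_P(Q)/|Q| + O(|z|/|Q|^2)$, the contribution of the tail to $\log L(1,\x_P)^z$ is $\sum_{\deg(Q) > M} z\,\x_P(Q)/|Q| + O(|z|)\sum_{\deg(Q)>M}|Q|^{-2}$. Using the Prime Polynomial Theorem~\ref{PNT} to count irreducibles and the range constraint $|z| \le \log|P|/(10N\log_2|P|\ln\log_2|P|)$ together with $M = N\log_2|P|$, both pieces are $O(|z|\,|P|^{-\text{const}})$, comfortably smaller than $(\log|P|)^{-B}$; exponentiating shows the smooth Euler product agrees with $L(1,\x_P)^z$ up to the claimed multiplicative factor $1 + O((\log|P|)^{-B})$.

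Next I would expand the smooth Euler product as a Dirichlet series $\sum_f \x_P(f)d_z(f)/|f|$ over monic $f$ whose irreducible factors all satisfy $\deg(Q)\le M$, and then truncate the degree of $f$ at $\frac{1}{3}\log|P|$ (i.e.\ $|f|\le |Q|^{1/3}$ in the notation of the statement). Here the key is that $d_z(f)$ grows polynomially in $\deg(f)$ of degree roughly $|z|$, so using the bound $|d_z(f)| \le d_{\lceil|z|\rceil}(f)$ and the divisor estimate in Lemma~\ref{dl}, the tail $\sum_{\deg(f) > \frac{1}{3}\log|P|} |d_z(f)|/|f|$ is geometrically small in $\deg(f)$, hence bounded by $|P|^{-c}$ for some positive $c$; this again sits inside the target error once $B = N/2 - 2$ is accounted for. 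The constant $\tfrac13$ in the exponent is what makes the geometric saving survive multiplication by the divisor-function growth factor $(\log|P|)^{O(|z|)}$, and the precise shape of the range of $z$ is calibrated so that $|z|\log_2|P| = O(\log|P|/\ln\log_2|P|)$ keeps this factor under control.

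The main obstacle will be bookkeeping the error terms uniformly in the complex parameter $z$: unlike the integer-$k$ case treated in Theorem~\ref{nagoshis theorem}, the coefficients $d_z(f)$ are complex and one must bound them via $|d_z(f)| \le d_{\lceil |z|\rceil}(f)$ (from the definition $d_z(Q^a) = \Gamma(z+a)/(\Gamma(z)a!)$ and monotonicity in the parameter), then feed these into the Prime Polynomial Theorem and Lemma~\ref{dl} while tracking how the allowed size of $|z|$ interacts with both the smoothness cutoff $M$ and the degree cutoff. Verifying that the two truncation errors each stay below $(\log|P|)^{-B}$ with $B = N/2 - 2$ — and in particular checking that the exponent bookkeeping forces exactly the stated constraint on $|z|$ — is the delicate quantitative step; the structural argument itself (truncate the Euler product, expand, truncate the degree) is routine once those bounds are in place.
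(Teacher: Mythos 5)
Your overall architecture---truncate the Euler product at smoothness level $M$, expand into a Dirichlet series over $M$-smooth $f$, then cut the degree of $f$ at $\frac{1}{3}\log|P|$---is exactly the paper's (its Lemma \ref{lnL(s,x)} followed by Lemma \ref{Lz}), but both of your quantitative mechanisms fail. In the first step you claim the tail of $\log L(1,\x_P)^z$ over $\deg(Q)>M$ is controlled by the Prime Polynomial Theorem \ref{PNT} and is $O(|z|\,|P|^{-\mathrm{const}})$. Neither holds. The linear term $z\sum_{\deg(Q)>M}\x_P(Q)/|Q|$ cannot be bounded by merely counting irreducibles: the trivial estimate gives $\sum_{k>M}\frac{1}{q^{k}}\cdot\frac{q^{k}}{k}=\sum_{k>M}\frac{1}{k}$, which diverges. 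One needs square-root cancellation in the character sums $\sum_{\deg(Q)=k}\x_P(Q)\ll \deg(P)\,q^{k/2}/k$, i.e.\ Proposition \ref{chi} (Weil's Riemann Hypothesis for curves); this is precisely how the paper's Lemma \ref{lnL(s,x)} produces the tail error $O\bigl(q^{(\frac{1}{2}-s)M}\deg(P)/M\bigr)$. Moreover, since $M=N\log_2|P|$ is only doubly logarithmic in $|P|$, the resulting saving is $q^{-M/2}=(\log|P|)^{-N/2}$---a power of $\log|P|$, not of $|P|$. Multiplying by $|z|\deg(P)$, with $|z|\le \log|P|/(10N\log_2|P|\ln\log_2|P|)$, yields $(\log|P|)^{2-N/2}$ up to iterated-log factors: this is the \emph{dominant} error of the lemma and the exact provenance of $B=N/2-2$. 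By asserting this step is power-saving in $|P|$ and ``comfortably smaller'' than $(\log|P|)^{-B}$, your sketch leaves the value of $B$ underived and the main error term unaccounted for.

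The second step has a parallel problem. You assert the tail $\sum_{\deg(f)>\frac{1}{3}\log|P|}|d_z(f)|/|f|$ is ``geometrically small in $\deg(f)$.'' At $s=1$ it is not: $\sum_{\deg(f)=m}d_k(f)/|f|=\binom{m+k-1}{k-1}$ grows polynomially in $m$, and the full series $\sum_f d_k(f)/|f|$ diverges, so no geometric decay is available and the bound $|P|^{-c}$ with fixed $c>0$ is unobtainable. The paper's Lemma \ref{Lz} instead applies Rankin's trick with the shift $\alpha=1/M$: the weight $|f|^{\alpha}|P|^{-\alpha/3}$ contributes $|P|^{-1/(3M)}$, while the shifted smooth Euler product is controlled by Mertens' theorem (Lemma \ref{Mertens}) as $\exp\left(O(k\ln M)\right)$, giving a total of $|P|^{-1/(c_0\log_2|P|)}$---smaller than any power of $\log|P|$, hence negligible against $(\log|P|)^{-B}$, but not a fixed power of $|P|$. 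A fixed shift $\alpha$ would fail, since $\sum_{\deg(Q)\le M}|Q|^{\alpha-1}$ then grows like a positive power of $\log|P|$ and the factor $\exp\bigl(|z|\sum_{\deg(Q)\le M}|Q|^{\alpha-1}\bigr)$ overwhelms the saving $|P|^{-\alpha/3}$. In short: the skeleton of your argument matches the paper, but the two estimates you propose would not close, and the ingredient that actually determines the error exponent $B$---Weil-bound cancellation in the Euler-product tail---is absent from your plan.
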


Before giving the proof of the above, we state a few results.

\begin{lemma}\label{lnL(s,x)}
	Let $F$ be a monic polynomial, and $\x$ be a non-trivial character on $\left(A/AF\right)^\times.$ For a positive integer $M$ and any complex number $s$ with $\mathfrak{R}(s)=1$ we have	
	
	\begin{equation*}
	\ln L(s,\x) = - \sum_{\deg(P)\le M} \ln\left(1-\frac{\x(P)}{|P|^s}\right) + O\left(\frac{q^{(\frac{1}{2}-s)M}}{M} \deg(F)\right).
	\end{equation*}
\end{lemma}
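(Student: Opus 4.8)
The plan is to prove the logarithmic expansion of $L(s,\chi)$ by starting from the Euler product and truncating the resulting sum over prime powers, with the tail controlled by the prime polynomial theorem and the Riemann hypothesis for curves (which underlies the square-root cancellation in character sums). First I would take the logarithm of the Euler product
\begin{equation*}
\ln L(s,\chi) = -\sum_{P} \ln\left(1-\frac{\chi(P)}{|P|^s}\right),
\end{equation*}
valid on $\mathfrak{R}(s)>1$ and, since $L(s,\chi)$ is a polynomial in $q^{-s}$ (hence entire) and nonvanishing on $\mathfrak{R}(s)\ge 1$ by RH for the associated curve, extended by continuity to the line $\mathfrak{R}(s)=1$. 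The goal is then to show that replacing the full sum by the partial sum over $\deg(P)\le M$ incurs an error of size $O\!\bigl(q^{(\frac12-s)M}M^{-1}\deg(F)\bigr)$.

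The key step is to estimate the tail $\sum_{\deg(P)>M}\ln\bigl(1-\chi(P)|P|^{-s}\bigr)$. Expanding the logarithm as $-\sum_{m\ge 1}\chi(P)^m/(m|P|^{ms})$ and grouping by the total degree $r=m\deg(P)$, I would write the tail as a sum over $r>M$ of character sums of the shape $\sum_{\deg(P^m)=r}\chi(P^m)$. For $\mathfrak{R}(s)=1$ the dominant contribution is the first-power terms $m=1$, and these are exactly the non-trivial character sums $\sum_{\deg(P)=r}\chi(P)$ over irreducibles, which by the explicit formula / Weil bound for the $L$-function of $\chi$ satisfy a square-root-cancellation estimate of order $q^{r/2}\deg(F)/r$ (the degree of $F$ enters through the conductor, i.e. the degree of $L(s,\chi)$ as a polynomial). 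Summing the bound $q^{r/2}/(r\,q^{rs})\cdot\deg(F)$ over $r>M$ gives a geometric series in $q^{(1/2-s)r}$ whose leading term is $q^{(1/2-s)M}M^{-1}\deg(F)$, which is precisely the claimed error. The higher-power terms $m\ge 2$ contribute $q^{r/2}$-type sums with an extra $q^{-rs}$ and are absolutely convergent and smaller, so they are absorbed into the same $O$-term.

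The main obstacle is making the character-sum input fully rigorous at the prime-power level: Proposition \ref{chi} as stated bounds $\sum_{\deg(P)=n}\bigl(\tfrac{f}{P}\bigr)$ for a fixed non-square $f$, whereas here I need the dual sum $\sum_{\deg(P)=r}\chi(P)=\sum_{\deg(P)=r}\bigl(\tfrac{P}{F}\bigr)$ (or the relevant quadratic symbol) with the conductor $F$ fixed and the degree $r$ varying, together with the correct dependence on $\deg(F)$. This requires invoking the explicit formula for $\ln L(s,\chi)$ in terms of the inverse zeros $\alpha_j$ of the polynomial $L$-function, namely $\sum_{\deg(P)=r,\,m\ge1,\,m\deg P=r}\chi(P)m^{-1}\cdots = -r^{-1}\sum_j \alpha_j^{r}$ up to the standard weighting, and then using $|\alpha_j|=\sqrt q$ (RH for curves) together with the count $\deg L(s,\chi)\ll\deg(F)$ to extract the factor $\deg(F)$. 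Once this explicit-formula bound is in place, the remaining work is the routine geometric summation described above, and care must be taken only to track that the factor $1/M$ (rather than $1/r$ summed) emerges correctly from the leading term of the tail series.
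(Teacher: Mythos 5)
Your proposal follows essentially the same route as the paper: take the logarithm of the Euler product, truncate at degree $M$, bound the tail's first-power terms via the square-root-cancellation character sum estimate $\sum_{\deg(P)=r}\chi(P)\ll q^{r/2}\deg(F)/r$, and sum the resulting geometric series in $q^{(\frac{1}{2}-s)r}$ to get the stated error, with higher prime powers absorbed. The only difference is that the paper simply invokes its Proposition \ref{chi} for the character sum (implicitly using reciprocity, since in its application $\chi=\chi_P$ is quadratic), whereas you justify the bound directly from the explicit formula and RH for curves ($|\alpha_j|=\sqrt{q}$ with $\deg L\ll\deg(F)$) --- a slightly more careful treatment of the same input, not a different method.
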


\begin{proof}
	Recall that $
	L(s,\x)=\prod_{\substack{P \text{ Prime}}} \left(1-\x(P)/|P|^s \right)^{-1},$ 
	then 
	
	\begin{equation*}
	\begin{split}
	\ln L(s,\x) 
	=& - \sum_{\substack{P \text{ monic} \\ \text{irreducible} \\ \deg(P)\le M}} \ln\left(1-\frac{\x(P)}{|P|^s} \right) - \sum_{\substack{P \text{ monic} \\ \text{irreducible} \\ \deg(P) > M}} \ln\left(1-\frac{\x(P)}{|P|^s} \right).\\
	\end{split}
	\end{equation*} 
	We can see that the first term of our result already appears and we only need to bound the second sum. From the fact that $\log(1+x)=x+O(1)$ and $\left|\x(P)\right|\le 1$ and Proposition \ref{chi} we have that
	
	\begin{equation*}
	\begin{split}
	\sum_{\substack{P \text{ monic} \\ \text{irreducible} \\ \deg(P) > M}} \ln\left(1-\frac{\x(P)}{|P|^s} \right) &= \sum_{k=M}^\infty \sum_{\substack{P \text{ monic} \\ \text{irreducible} \\ \deg(P)=k}} \frac{\x(P)}{|P|^s} +O\left( \sum_{\substack{P \text{ monic} \\ \text{irreducible} \\ \deg(P) > M}} \frac{1}{|P|^s}  \right) \\
	&= \sum_{k=M}^\infty q^{-sk} \sum_{\substack{P \text{ monic} \\ \text{irreducible} \\ \deg(P)=k}} \x(P) +O\left( q^{(1-s)M}  \right) \\
	&\ll \deg(F) \sum_{k=M}^\infty q^{-sk} \frac{q^{\frac{k}{2}}}{k}\\
	&\ll \deg(F)  \frac{q^{(\frac{1}{2}-s)M}}{M},
	\end{split}
	\end{equation*} 
	with $F$ a non-perfect square.
\end{proof}

The next result is given below.

\begin{lemma}\label{Lz}
	Let $P\in \mathbb{P}_n, N>4$ be fixed constant and $z\in\C$ such that $|z|\le \frac{\log|P|}{10 N \log_2|P|\ln\log_2|P|} $ and $M=N \log_2|P|$. Then for $c_0$ some positive constant we have
	
	\begin{equation*}
	\sum_{\substack{f \text{ monic} \\ Q\mid f \Rightarrow \deg(Q)\le M}} \frac{\x_P(f)}{|f|} d_z(f) = \sum_{\substack{f \text{ monic} \\ |f|\le|P|^{1/3} \\ Q\mid f \Rightarrow \deg(Q)\le M}} \frac{\x_P(f)}{|f|} d_z(f) + O\left(|P|^{-\frac{1}{c_0\log_2|P|}}\right).
	\end{equation*}
\end{lemma}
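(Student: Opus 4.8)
The plan is to notice that the two sums in the statement differ by exactly the ``smooth tail''
\[
T := \sum_{\substack{f \text{ monic},\ |f|>|P|^{1/3} \\ Q\mid f \,\Rightarrow\, \deg(Q)\le M}} \frac{\x_P(f)}{|f|}\, d_z(f),
\]
so it suffices to show $|T| \ll |P|^{-1/(c_0\log_2|P|)}$. First I would throw away the character trivially via $|\x_P(f)|\le 1$, together with the elementary pointwise bound $|d_z(f)|\le d_{|z|}(f)$, which follows from $d_z(Q^a)=z(z+1)\cdots(z+a-1)/a!$, the inequalities $|z+j|\le |z|+j$, and multiplicativity. This reduces matters to bounding the nonnegative quantity $\sum_{|f|>|P|^{1/3},\ M\text{-smooth}} d_{|z|}(f)/|f|$, where $M$-smooth means all prime factors have degree $\le M$.

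The second step is Rankin's trick: for a parameter $0<\sigma<1$ one has $(|f|/|P|^{1/3})^{\sigma}\ge 1$ on the range $|f|>|P|^{1/3}$, hence
\[
\sum_{\substack{|f|>|P|^{1/3}\\ M\text{-smooth}}} \frac{d_{|z|}(f)}{|f|}
\;\le\; |P|^{-\sigma/3}\!\!\sum_{M\text{-smooth}} \frac{d_{|z|}(f)}{|f|^{1-\sigma}}
\;=\; |P|^{-\sigma/3}\!\!\prod_{\deg(Q)\le M}\left(1-\frac{1}{|Q|^{1-\sigma}}\right)^{-|z|},
\]
the final Euler factorisation being legitimate because the product is over the finitely many primes of degree $\le M$ and $1-\sigma>0$. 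Taking logarithms, the product contributes $|z|\sum_{\deg(Q)\le M}\bigl(|Q|^{-(1-\sigma)}+O(|Q|^{-2(1-\sigma)})\bigr)$, and since $1-\sigma>\tfrac12$ the $O$-terms sum to $O(1)$.

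The crux is to estimate $S:=\sum_{\deg(Q)\le M}|Q|^{-(1-\sigma)}$ and to balance it against the Rankin saving $|P|^{-\sigma/3}$. The decisive choice is $\sigma = 1/M = 1/(N\log_2|P|)$, which keeps the otherwise dangerous factor $q^{M\sigma}=q$ bounded; here $N>4$ guarantees $\sigma<\tfrac12$, as needed above. With this choice the Prime Polynomial Theorem \ref{PNT} gives $\sum_{\deg(Q)=d}|Q|^{-(1-\sigma)}=q^{d\sigma}/d+O(q^{d\sigma-d/2}/d)$, and using the convexity bound $q^{d/M}-1\le (q-1)(d/M)$ for $d\le M$ one finds $S=\sum_{d\le M}q^{d/M}/d+O(1)=\ln M+O(1)\ll \ln\log_2|P|$. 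Therefore the product costs only $\exp\!\bigl(|z|\cdot O(\ln\log_2|P|)\bigr)$, and the hypothesis $|z|\le \log|P|/(10N\log_2|P|\ln\log_2|P|)$ bounds this by $\exp\!\bigl(c\,\log|P|/(N\log_2|P|)\bigr)$ with $c$ a small absolute constant (essentially $\tfrac1{10}$), whereas the Rankin factor equals $\exp\!\bigl(-\tfrac{\ln q}{3}\,\log|P|/(N\log_2|P|)\bigr)$. Since $q\ge 3$ gives $\tfrac{\ln q}{3}>\tfrac1{10}$, the saving beats the cost, yielding $|T|\ll \exp\!\bigl(-c'\log|P|/(N\log_2|P|)\bigr)=|P|^{-1/(c_0\log_2|P|)}$ for a suitable $c_0=c_0(q,N)>0$.

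The main obstacle I anticipate is precisely this last balancing. A careless estimate of the form $S\ll q^{M\sigma}/(\sigma\ln q)$ would be exponentially large in $M$ and would completely swamp the Rankin saving; the whole argument hinges on the observation that for $\sigma\asymp 1/M$ the weight $q^{d\sigma}$ stays bounded, so that $S$ is only of logarithmic size $\ln M\sim\ln\log_2|P|$. It is exactly this logarithmic factor that cancels the $\ln\log_2|P|$ appearing in the denominator of the admissible range for $|z|$, leaving a genuine power saving of the shape demanded by the statement. Careful bookkeeping of the numerical constants, so that the saving $\tfrac{\ln q}{3}$ strictly dominates the contribution of $|z|\,S$, is where the hypotheses $N>4$ and $q$ odd (hence $q\ge 3$) are used.
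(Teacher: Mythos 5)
Your proof is correct and follows essentially the same route as the paper: bound the tail $|f|>|P|^{1/3}$ trivially via $|\x_P(f)|\le 1$ and a divisor-function majorant, apply Rankin's trick with the identical choice $\sigma=1/M$, and use the Mertens-type estimate $\sum_{\deg(Q)\le M}|Q|^{-1}\sim \ln M$ (the paper cites Lemma \ref{Mertens}, whereas you rederive it from the Prime Polynomial Theorem). The only cosmetic differences are that the paper majorizes $d_z(f)$ by $d_k(f)$ with an integer $k>|z|$ rather than by $d_{|z|}(f)$, and that your bookkeeping of the final balancing constants is more explicit than the paper's.
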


\begin{proof}
	Let $z\in\C$ and $k\in\Z$ such that $|z|<k.$ Consider the sum

	\begin{equation*}
	\begin{split}
	\left|\sum_{\substack{f \text{ monic}\\ |f|>|P|^{1/3} \\ Q\mid f \Rightarrow \deg(Q)\le M}} \frac{\x_P(f)}{|f|} d_z(f) \right| &\ll \sum_{\substack{f \text{ monic} \\|f|>|P|^{1/3} \\ Q\mid f \Rightarrow \deg(Q)\le M}} \left|\frac{\x_P(f)}{|f|} d_z(f) \right| \\
	&\ll \sum_{\substack{f \text{ monic} \\|f|>|P|^{1/3} \\ Q\mid f \Rightarrow \deg(Q)\le M}} \frac{ d_k(f) }{|f|} ,
	\end{split}
	\end{equation*}
	since $\left|\x_P(f)\right|\le 1 $ and $d_z(f)<d_k(f)$ for $|z|<k.$ Let $0<\alpha\le\frac{1}{2}$ then using Rankin's trick we have

	\begin{equation*}
	\begin{split}
	\left|\sum_{\substack{f \text{ monic}\\ |f|>|P|^{1/3} \\ Q\mid f \Rightarrow \deg(Q)\le M}} \frac{\x_P(f)}{|f|} d_z(f) \right| 
	&\ll |P|^{-\frac{\alpha}{3}} \prod_{\substack{Q \text{ monic} \\ \text{irreducible} \\ \deg(Q)\le M}} \left(1- \sum_{j=1}^\infty \frac{ d_k(Q^j) }{|Q|^{(1-\alpha)j}}\right)\\
	&\ll |P|^{-\frac{\alpha}{3}} \exp\left(\sum_{\substack{Q \text{ monic} \\ \text{irreducible} \\ \deg(Q)\le M}} \sum_{j=1}^\infty \frac{ d_k(Q^j) }{|Q|^{(1-\alpha)j}}\right)\\
	&\ll |P|^{-\frac{1}{3M}} \exp\left(O\left(k \sum_{\substack{Q \text{ monic} \\ \text{irreducible} \\ \deg(Q)\le M}}  \frac{ 1}{|Q|}\right)\right)
	\end{split}
	\end{equation*}
	for $\alpha=1/M$ and $d_z(Q^r)=\Gamma(z+r)/\Gamma(z)r!$. Choose $M= N \log_2|P|$ and using Merten's Theorem, Lemma \ref{Mertens}, we have 

	\begin{equation*}
	\begin{split}
	\left|\sum_{\substack{f \text{ monic}\\ |f|>|P|^{1/3} \\ Q\mid f \Rightarrow \deg(Q)\le M}} \frac{\x_P(f)}{|f|} d_z(f) \right| &\ll |P|^{-\frac{1}{3M}} \exp\left(O\left(k \ln M\right)\right)\\
	&\ll |P|^{-\frac{1}{c_0 \log_2|P|}}.
	\end{split}
	\end{equation*}
\end{proof}

\textit{Proof of Lemma \ref{l1}}. Using Lemma \ref{lnL(s,x)} we can write

	\begin{equation*}
	\begin{split}
	L(1,\x_P)^z 
	& = \exp\left(- z \sum_{\deg(Q)\le M} \ln\left(1-\frac{\x_P(Q)}{|Q|}\right) + O\left( |z| \frac{q^{-\frac{M}{2}}}{M} \deg(P)\right)\right) \\
	& = \exp\left(- z \sum_{\deg(Q)\le M} \ln\left(1-\frac{\x_P(Q)}{|Q|}\right)\right) \exp\left(O\left( |z| \frac{q^{-\frac{M}{2}}}{M} \deg(P)\right)\right).
	\end{split}
	\end{equation*}

	Using the fact that $M=N \log_2|P|$ we have $q^{-\frac{M}{2}} = \left(\log|P|\right)^{-\frac{N}{2}},$ and deg$(P)=\log|P|, |z|\le \frac{\log|P|}{10 N \log_2|P|\ln\log_2|P|}$, so we can write the expression inside of the big Oh as	
	
	\begin{equation*}
	\frac{\left(\log|P|\right)^2}{\left(\log|P|\right)^{N/2}}\frac{1}{10 a \left(\log_2|P|\right)^2 \ln\log_2|P|}\ll \frac{1}{\left(\log|P|\right)^B}, 
	\end{equation*}
	since $N>4$. Hence,	
	
	\begin{equation*}
	\begin{split}
	L(1,\x_P)^z 
	& = \prod_{\substack{Q \text{ irreducible} \\ \deg(Q)\le M}}\left(\sum_{i=0}^\infty \frac{\x_P(Q^i)}{|Q|^i} d_z(Q^i)\right) \left( 1+ O\left(\frac{1}{\left(\log|P|\right)^B}\right)\right)\\
	& = \sum_{\substack{f \text{ monic} \\ Q\mid f \Rightarrow\deg(Q)\le M}}\left( \frac{\x_P(f)}{|f|} d_z(f)\right) \left( 1+ O\left(\frac{1}{\left(\log|P|\right)^B}\right)\right).
	\end{split}
	\end{equation*}	
	Applying Lemma \ref{Lz} the lemma follows.		\qed \\

Averaging $L(1,\x_P)$ over all $P\in\mathbb{P}_n$ making the use of Lemma \ref{l1} give us 

\begin{equation*}
\begin{split}
\sum_{P\in \mathbb{P}_n}L(1,\x_P)^z 
&= \left(1+O\left(\frac{1}{\left(\log|P|\right)^B}\right)\right) \sum_{\substack{f \text{ monic} \\ |f|\le |P|^{1/3} \\ Q\mid f \Rightarrow \deg(Q)\le M}} \frac{d_z(f)}{|f|}  \sum_{P\in \mathbb{P}_n}\x_P(f)\\
&= \left(1+O\left(\frac{1}{\left(\log|P|\right)^B}\right)\right) \left(S_1 +S_2\right),
\end{split}
\end{equation*} 
where

\begin{equation}\label{S1}
S_1 := \sum_{\substack{f \text{ monic and square} \\ |f|\le |P|^{1/3} \\ Q\mid f \Rightarrow \deg(Q)\le M}} \frac{d_z(f)}{|f|}  \sum_{P\in \mathbb{P}_n}\x_P(f),
\end{equation}
and  

\begin{equation}\label{S2}
S_2 := \sum_{\substack{f \text{ monic and not square} \\ |f|\le |P|^{1/3} \\ Q\mid f \Rightarrow \deg(Q)\le M}} \frac{d_z(f)}{|f|}  \sum_{P\in \mathbb{P}_n}\x_P(f).
\end{equation}

\subsection{Evaluating $S_2$: Contribution of the Non-Square Terms.}$\text{\color{white}djh}$\\

%

\begin{lemma}
	Let $P\in\mathbb{P}_n, N>4$ be a constant, $z\in\C$ be such that  $|z|\le \frac{\log|P|}{10N \log_2|P|\ln\log_2|P|},$ $k\in\Z$ with $|z|<k$ and $M=N \log_2|P|$. Then	
	
	\begin{equation*}
	S_2 \ll |P|^{\frac{1}{2}}\left(\log|P|\right)^k,
	\end{equation*}
	with $S_2$ defined as in (\ref{S2}).
\end{lemma}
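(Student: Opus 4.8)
The plan is to bound $S_2$ by passing to absolute values inside the sum over $f$ and then invoking the character-sum estimate of Proposition \ref{chi} for each inner sum. First I would use that $|z| < k$ gives the termwise bound $|d_z(f)| \le d_k(f)$, and since everything is being estimated in absolute value I may simply discard the smoothness restriction $Q \mid f \Rightarrow \deg(Q) \le M$ (enlarging the range of summation only weakens nothing). The cap $|f| \le |P|^{1/3} = q^{n/3}$ is exactly the condition $\deg(f) \le \lfloor n/3\rfloor$, and every non-square $f$ has $\deg(f) \ge 1$, so Proposition \ref{chi} applies to the inner sum and yields $\bigl|\sum_{P\in\mathbb{P}_n}\x_P(f)\bigr| \ll \frac{q^{n/2}}{n}\deg(f)$. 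This leaves
\[
|S_2| \ll \frac{q^{n/2}}{n} \sum_{\substack{f \text{ monic},\, f\neq\Box \\ \deg(f)\le \lfloor n/3\rfloor}} \frac{d_k(f)}{|f|}\,\deg(f).
\]

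Next I would organize the remaining sum by the degree $m=\deg(f)$, running over $1 \le m \le \lfloor n/3\rfloor$, and write $|f| = q^m$, so that
\[
|S_2| \ll \frac{q^{n/2}}{n} \sum_{m=1}^{\lfloor n/3\rfloor} \frac{m}{q^m} \sum_{\deg(f)=m} d_k(f).
\]
The key mechanism is the cancellation of the geometric weight: by Lemma \ref{dl} the inner divisor sum is $\ll q^m m^{k-1}$, so the factor $q^{-m}$ exactly offsets the $q^m$ growth, and the summand reduces to $m\cdot m^{k-1}=m^{k}$. Hence $\sum_{m=1}^{\lfloor n/3\rfloor} m^{k} \ll (n/3)^{k+1} \ll n^{k+1}$.

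Combining these gives $|S_2| \ll \frac{q^{n/2}}{n}\, n^{k+1} = q^{n/2} n^{k} = |P|^{\frac12}(\log|P|)^{k}$, using $|P|=q^n$ and $\log|P| = n$, which is the claimed bound. I do not expect any serious obstacle here: the argument is essentially a routine combination of Proposition \ref{chi} and Lemma \ref{dl}. The only point requiring a little care is that the linear factor $\deg(f)$ coming out of Proposition \ref{chi} inflates the degree sum by one power of $m$; this is absorbed cleanly precisely because the weight $q^{-m}$ cancels the $q^m$ in Lemma \ref{dl}, turning the tail into the harmless polynomial sum $\sum_m m^{k}$. The two restrictions defining $S_2$ (smoothness and the cap $|f|\le|P|^{1/3}$) are therefore used only to enforce $\deg(f)\le \lfloor n/3\rfloor$, and are otherwise irrelevant to the bound.
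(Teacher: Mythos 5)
Your argument is correct and is essentially identical to the paper's own proof: the authors likewise apply Proposition \ref{chi} to the inner sum over $P\in\mathbb{P}_n$, replace $|d_z(f)|$ by $d_k(f)$ using $|z|<k$, reorganize by $\deg(f)=j\le \lfloor n/3\rfloor$ so that the weight $q^{-j}$ cancels the $q^j$ from the divisor-sum bound of Lemma \ref{dl}, and sum $j^{k}$ to get $|P|^{1/2}(\log|P|)^{k}$. Your observations that the smoothness condition may be discarded after passing to absolute values, and that $f\neq\Box$ guarantees both $\deg(f)\ge 1$ and the applicability of Proposition \ref{chi}, match the (implicit) steps in the paper.
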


\begin{proof}
	By Proposition \ref{chi} we have	

	\begin{equation*}
	\begin{split}
	S_2 &\ll \frac{q^{\frac{n}{2}}}{n} \sum_{\substack{f \text{ monic, }f\neq \Box \\|f|\le |P|^{1/3} \\ Q\mid f \Rightarrow \deg(Q)\le M}} \frac{d_z(f)}{|f|} \deg(f)\\
	& \ll \frac{q^{\frac{n}{2}}}{n} \sum_{j=0}^{[n/3]} q^{-j} j \sum_{\substack{f \text{ monic, } \\\deg(f)=j \\ Q\mid f \Rightarrow \deg(Q)\le M}} d_k(f)\\
	& \ll |P|^{\frac{1}{2}}\left(\log|P|\right)^k.
	\end{split}
	\end{equation*}	
\end{proof}

\subsection{Evaluating $S_1$: Contribution of the Square Terms.}$\text{\color{white}djh}$\\

%
Using the Prime Polynomial Theorem \ref{PNT} we have

\begin{equation*}
\begin{split}
S_1 &= \left( 1+ O\left(\frac{1}{\left(\log|P|\right)^B}\right)\right) \\
& \text{\color{white}dkmdh}\times\left(\sum_{\substack{f \text{ monic and square} \\ |f|\le |P|^{1/3} \\ Q\mid f \Rightarrow \deg(Q)\le M}} \frac{d_z(f)}{|f|} \left(\frac{|P|}{\log|P|} + O\left(\frac{|P|^{\frac{1}{2}}}{\log|P|}\right)\right)\right).
\end{split}
\end{equation*} 

Our goal in this section is to find an estimate of the above term, which is where the difficulty lies. So here enters the random model $L(1,\w)$ to help us to obtain the desired formula. Let $\left\{W_P\mid P \text{ prime}\right\}$ be the sequence defined in section \ref{random product}. In this section we prove the following Lemma.

\begin{lemma}\label{L1z}
	Let $P\in\mathbb{P}_n$. Let $z\in\C$ be such that $|z|\le \frac{\log|P|}{260 \log_2|P|\ln\log_2|P|}.$ Then	
	
	\begin{equation*}
	\frac{1}{|\mathbb{P}_n|} \sum_{P\in \mathbb{P}_n} L(1,\x_P)^z = E(L(1,\mathbb{X})^z) \left(1+O\left(\frac{1}{\left(\log|P|\right)^{11}}\right)\right).
	\end{equation*}
\end{lemma}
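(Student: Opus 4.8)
The plan is to begin from the identity already assembled just before the definitions (\ref{S1}) and (\ref{S2}), namely
\[
\sum_{P\in\mathbb{P}_n}L(1,\x_P)^z=\left(1+O\left(\frac{1}{(\log|P|)^{B}}\right)\right)(S_1+S_2),
\]
obtained by averaging Lemma \ref{l1} over $P\in\mathbb{P}_n$, and to fix the free parameter at $N=26$. Then $B=N/2-2=11$ and the admissible range $|z|\le\frac{\log|P|}{10N\log_2|P|\ln\log_2|P|}$ becomes exactly the hypothesis $|z|\le\frac{\log|P|}{260\log_2|P|\ln\log_2|P|}$, which is where the constant $260$ comes from. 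The non-square part has already been estimated as $S_2\ll|P|^{1/2}(\log|P|)^{k}$; dividing by $|\mathbb{P}_n|\sim|P|/\log|P|$ (Theorem \ref{PNT}) turns its contribution into $O\big(|P|^{-1/2}(\log|P|)^{k+1}\big)$, which is negligible. Thus the entire problem reduces to matching $S_1/|\mathbb{P}_n|$ with the random-model moment $E(L(1,\mathbb{X})^z)=\sum_{f}d_z(f^2)/|f|^2$, the latter equality being Lemma \ref{E=dk}, whose Euler-product computation in (\ref{e=dk}) carries over verbatim to complex $z$ since it only uses the multiplicative coefficients $d_z(Q^{2n})$.

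Next I would evaluate $S_1$ explicitly. Writing each monic square $f=l^2$ with $|f|\le|P|^{1/3}$ gives $\deg l\le n/6<n=\deg P$, so the irreducible $P$ of degree $n$ cannot divide $l$; hence $\x_P(l^2)=\x_P(l)^2=1$ for \emph{every} $P\in\mathbb{P}_n$. Therefore $\sum_{P\in\mathbb{P}_n}\x_P(l^2)=|\mathbb{P}_n|$ exactly, with no error term at all, and dividing through yields
\[
\frac{S_1}{|\mathbb{P}_n|}=\sum_{\substack{l\text{ monic},\ |l|\le|P|^{1/6}\\ Q\mid l\,\Rightarrow\,\deg Q\le M}}\frac{d_z(l^2)}{|l|^2}.
\]
It then remains to compare this truncated, $M$-smooth sum with the complete sum $\sum_{l}d_z(l^2)/|l|^2=E(L(1,\mathbb{X})^z)$.

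The discrepancy splits into two tails. The contribution of $l$ with $|l|>|P|^{1/6}$ is controlled by Rankin's trick precisely as in the proof of Lemma \ref{Lz}, giving a bound of the shape $|P|^{-1/(c_0\log_2|P|)}$, which decays faster than any fixed power of $\log|P|$. The contribution of the non-$M$-smooth $l$ factors out of the Euler product: it equals the $M$-smooth sum times
\[
\prod_{\deg Q>M}\Big(1+\sum_{j\ge1}\frac{d_z(Q^{2j})}{|Q|^{2j}}\Big)-1\ \ll\ k^{2}\sum_{\deg Q>M}\frac{1}{|Q|^{2}}\ \ll\ \frac{k^{2}}{M q^{M}},
\]
and since $M=N\log_2|P|$ gives $q^{M}=(\log|P|)^{N}=(\log|P|)^{26}$ while $k\ll|z|\ll\log|P|$, this relative error is $O\big((\log|P|)^{-24}\big)$. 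Assembling the four sources of error — the outer factor $O((\log|P|)^{-B})$ with $B=11$, the $S_2$ term, the Rankin tail, and the non-smoothness tail — produces the claimed $E(L(1,\mathbb{X})^z)\big(1+O((\log|P|)^{-11})\big)$.

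The main obstacle is that the target error is \emph{relative} to $E(L(1,\mathbb{X})^z)$, which by Lemma \ref{Barban lemma 5.8} may be as large as $c^{\,k\ln\ln k}$. The non-smoothness tail is automatically relative because it appears as the $M$-smooth sum multiplied by a small factor, but one must still verify that the powers of $k\ll|z|$ generated by the complex divisor function $d_z$ are genuinely absorbed by $q^{M}=(\log|P|)^{N}$, and that the Rankin bound, which is absolute, remains negligible against $E(L(1,\mathbb{X})^z)$ throughout. Keeping all implied constants uniform over the full range of $z$ is exactly what forces the numerology $N=26$, $B=11$: the factor $q^{M}$ must be a power of $\log|P|$ large enough to dominate the $k^2$ (and, through Lemma \ref{l1}, the $(\log|P|)^{-B}$) losses, which is the delicate point of the whole argument.
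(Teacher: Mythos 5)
Your proposal is correct and follows essentially the same route as the paper's: truncation via Lemma \ref{l1} with the choice $N=26$, $B=11$ (whence the constant $260$), the square/non-square split into $S_1+S_2$ with the Weil-bound estimate for $S_2$, and the comparison of the truncated $M$-smooth sum with $E\left[L(1,\X)^z\right]=\sum_f d_z(f^2)/|f|^2$ via Rankin's trick (Lemma \ref{Lz}) together with the estimate that the Euler factors with $\deg Q>M$ contribute $1+O\left(k^2 q^{-M}/M\right)$. Your only deviations are minor refinements of the same argument --- evaluating $\sum_{P\in\mathbb{P}_n}\x_P(l^2)=\left|\mathbb{P}_n\right|$ exactly (since $\deg l\le n/6<\deg P$) where the paper instead invokes the Prime Polynomial Theorem with an error term, and explicitly flagging that the Rankin tail is an \emph{absolute} error that must be weighed against the size of $E\left[L(1,\X)^z\right]$, a point the paper (following Lumley) leaves implicit.
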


Recall Lemma \ref{E=dk}. Using the same reasoning as in the previous section we have for any $z\in\C$

\begin{equation*}
\begin{split}
E\left[L(1,\cdot)^z\right] 
& = \sum_{f \text{ monic}} \frac{d_z(f^2) }{|f|^2},
\end{split}
\end{equation*}
since $d_z(f)$ and $|f|$ can be seen as scalars and $L(1,\cdot)$ is defined in (\ref{lomega}). We have from the definition of random Euler product 

\begin{equation*}
E\left[L(1,\w)^z\right]= \prod_{\substack{P \text{ monic} \\ \text{irreducible}}} E\left[L_P(1,\w)^z\right],
\end{equation*}
where 

\begin{equation*}
\begin{split}
E\left[L_P(1,\w)^z\right] :& = E\left[\left(1- \frac{W_P}{|P|}\right)^{-z}\right]\\
& = \frac{1}{2} \left(\left(1-\frac{1}{|P|}\right)^{-z} + \left(1+\frac{1}{|P|}\right)^{-z}\right).
\end{split}
\end{equation*}
Writing the Taylor expansion for $\deg (P)>M$ we have that

\begin{equation*}
\left(1-\frac{1}{|P|}\right)^{-z} = 1+\frac{z}{|P|}+O\left(\frac{|z|}{|P|^2}\right),
\end{equation*}
and

\begin{equation*}
\left(1+\frac{1}{|P|}\right)^{-z} = 1-\frac{z}{|P|}+O\left(\frac{|z|}{|P|^2}\right).
\end{equation*}
Thus, for monic irreducible polynomial $Q$ with deg$Q>M$ we have 

\begin{equation*}
E\left[L_P(1,\w)^z\right]=1+O\left(\frac{|z|}{|Q|^2}\right),
\end{equation*}
and so

\begin{equation*}
\begin{split}
\prod_{\substack{P \text{ irreducible} \\ \deg(P)\ge M}} E\left[L_P(1,\w)^z\right]
&\ll \exp\left(|z| \sum_{\substack{P \text{ irreducible} \\ \deg(P)\ge M}} \frac{1}{|P|^2}\right) \\ 
&\ll \exp\left(|z|  \frac{1}{M^2}\right)\\
& = 1+O\left(\frac{1}{\left(\log|Q|\right)^B}\right).
\end{split}
\end{equation*}
The last equality follows from the relative size of $|z|$ and $M$ and for large enough $N$. Finally, from Lemma \ref{Lz} we have that 

\begin{equation*}
\begin{split}
E\left[L(1,\w)^z\right] &= \sum_{\substack{f \text{ monic}  \\ P\mid f \Rightarrow \deg(P)\le M}} \frac{d_z(f^2)}{|f|^2} \left(1+O\left(\frac{1}{\left(\log|Q|\right)^B}\right)\right)\\
&= \sum_{\substack{f \text{ monic}  \\ |f| <|Q|^{1/3} \\ P\mid f \Rightarrow \deg(P)\le M}} \frac{d_z(f^2)}{|f|^2} \left(1+O\left(\frac{1}{\left(\log|Q|\right)^B}\right)\right).
\end{split}
\end{equation*}

From the above and Lemma \ref{l1} and with the same choice made by Lumley \cite{Lum}, i.e.,  $N=26$ and $B=11$ we have proved Lemma \ref{L1z}. Using the fact that $|P|=q^n$ we obtain Theorem \ref{lumley theorem}. Corollaries \ref{lumley coral1} and \ref{lumley coral2} follows from the above discussion and equation (\ref{hP}).




\end{document}